\documentclass[reqno]{amsart}

\usepackage[T1]{fontenc}
\usepackage{amsmath,amssymb,mathtools}
\usepackage{enumitem}
\usepackage{graphicx}
\usepackage{xcolor}
\usepackage{float}
\usepackage{tikz}
\usetikzlibrary{arrows.meta,calc,fit,positioning}
\usepackage{pgfplots}
\pgfplotsset{compat=1.18}
\usepackage{hyperref}
\usepackage{microtype}

\definecolor{OrbitBlue}{HTML}{155C8A}
\definecolor{OrbitTeal}{HTML}{168A83}
\definecolor{OrbitGold}{HTML}{D49A28}
\definecolor{OrbitRed}{HTML}{B44C43}
\definecolor{OrbitInk}{HTML}{263746}
\definecolor{OrbitMist}{HTML}{EEF4F7}

\numberwithin{equation}{section}
\allowdisplaybreaks
\newtheorem{theorem}{Theorem}[section]
\newtheorem{proposition}[theorem]{Proposition}
\newtheorem{lemma}[theorem]{Lemma}
\newtheorem{corollary}[theorem]{Corollary}
\theoremstyle{definition}
\newtheorem{definition}[theorem]{Definition}
\newtheorem{example}[theorem]{Example}
\theoremstyle{remark}
\newtheorem{remark}[theorem]{Remark}

\newcommand{\cH}{\mathcal H}
\newcommand{\cI}{\mathcal I}
\newcommand{\N}{\mathbb N}
\newcommand{\R}{\mathbb R}

\newcommand{\T}{\mathbb T}
\newcommand{\U}{\mathcal U}
\newcommand{\PU}{\operatorname{PU}}
\newcommand{\pker}{\operatorname{pker}}
\newcommand{\Ad}{\operatorname{Ad}}
\newcommand{\ad}{\operatorname{ad}}
\newcommand{\HS}{\mathrm{HS}}
\newcommand{\e}{\mathrm e}
\newcommand{\Step}[1]{\par\medskip\noindent\textit{#1.}\quad}
\newcommand{\LeanSourceLink}[1]{{\small\nolinkurl{#1}}}

\title[Near-Parseval orbits from mixing and expansion]
{Near-Parseval orbit frames for irreducible unitary representations:\\
from mixing and expansion}

\author{Vignon Oussa}
\address{Department of Mathematics, Bridgewater State University,
Bridgewater, MA 02325}
\email{voussa@bridgew.edu}

\subjclass[2020]{Primary 22D10, 42C15; Secondary 22E27, 22E46,
22E50, 43A65}
\keywords{orbit frame, irreducible unitary representation, exponential
solvable Lie group, projective kernel, matrix coefficients, Howe--Moore
decay, conjugation escape, storage and retrieval}
\hypersetup{
  colorlinks=true,
  linkcolor=OrbitBlue,
  citecolor=OrbitTeal,
  urlcolor=OrbitBlue,
  pdftitle={Near-Parseval orbit frames for irreducible unitary representations: from mixing and expansion},
  pdfauthor={Vignon Oussa},
  pdfkeywords={orbit frame, irreducible unitary representation, exponential solvable Lie group, projective kernel, Howe-Moore decay}
}

\begin{document}

\begin{abstract}
We establish a sequential storage theorem for unitary representations of
second-countable locally compact groups.  More precisely, we assume
weak-operator decay along one storage sequence together with weak-operator
decay of the conjugates of every nontrivial element in a local retrieval
family.  Under these assumptions, and for every $0<\varepsilon<1$, we
construct a single discrete orbit with frame bounds $(1-\varepsilon)^2$ and
$(1+\varepsilon)^2$, whose sampling set is both quotient-injective and
relatively separated modulo the projective kernel.  Although projective
$C_0$ decay and geometric conjugation escape provide a natural sufficient
condition, neither solvability nor Lie structure enters the abstract theorem.
Consequently, the same mechanism applies in several different settings.  It
shows that every infinite-dimensional irreducible unitary representation of
an exponential solvable Lie group admits a near-Parseval orbit frame, and it
gives the same conclusion for every infinite-dimensional irreducible unitary
representation of a connected semisimple real Lie group with finite center.
In the latter case, the required hypotheses follow by combining Howe--Moore
decay on an active noncompact simple factor with expansion in a restricted-root
subgroup.  In addition, the sequential formulation applies to
$\operatorname{SL}_d(\mathbb k)$ over every local field $\mathbb k$, including
the totally disconnected non-Archimedean cases.  Finally, when the effective
projective quotient in the exponential solvable case is abelian, the
near-Parseval conclusion can be strengthened: a transported Weyl lattice
produces an orbit which is an orthonormal basis.
\end{abstract}

\maketitle
\enlargethispage{4pt}

\section{Introduction}\label{sec:introduction}

\subsection{Orbit frames and projective kernels}

Let $G$ be a second-countable locally compact group and let
$\pi:G\to\U(\cH)$ be a strongly continuous unitary representation on a
separable Hilbert space.  We are concerned with the following orbit-frame
problem: determine whether one can find a single nonzero vector
$\phi\in\cH$ and a countable set $\Gamma\subset G$ such that
\[
   \{\pi(\gamma)\phi:\gamma\in\Gamma\}
\]
is a frame for $\cH$.  Equivalently, one seeks constants
$0<A\leq B<\infty$ satisfying
\begin{equation}\label{eq:intro-frame-definition}
 A\|f\|^2
 \leq \sum_{\gamma\in\Gamma}
       |\langle f,\pi(\gamma)\phi\rangle|^2
 \leq B\|f\|^2
 \qquad(f\in\cH).
\end{equation}
The essential feature of the problem is that every frame vector must belong
to the same group orbit.  On the other hand, no subgroup or lattice condition
is imposed on $\Gamma$, and this flexibility will be important in the
construction below.

For a unitary representation $\pi$, its \emph{projective kernel} is
\begin{equation}\label{eq:projective-kernel}
 K=\pker(\pi)
   :=\{g\in G:\pi(g)\in\T I_{\cH}\}.
\end{equation}
Since elements of $K$ change an orbit vector only by a unimodular scalar, the
quotient $Q=G/K$ is the effective parameter space for the orbit.  It is
therefore natural to formulate the geometry of the sampling set in $Q$.  A
subset $\Lambda$ of a locally compact group $Q$ is called \emph{left
relatively separated} if, for some relatively
compact identity neighborhood $V\subset Q$,
\begin{equation}\label{eq:relative-separation}
  \sup_{x\in Q}\#(\Lambda\cap xV)<\infty.
\end{equation}
The preceding condition is independent of the particular relatively compact
identity neighborhood.  Moreover, every relatively separated subset is
locally finite and is therefore closed and discrete.

In the principal solvable setting of this paper, every irreducible
representation is monomial \cite{Pukanszky1968,LeptinLudwig1994}.  This
structural fact is useful, but monomiality alone does not resolve the
discretization problem.  Indeed, a polarization need not be normal, its
normal core may provide too few scalar modulation parameters, and fiberwise
frames need not share a single sampling set with uniform bounds.  For this
reason, our construction is carried out directly in the representation
space, where these compatibility questions do not arise.

Before placing the result in its historical setting, we fix two conventions.
Throughout the paper, inner products are linear in the first variable and
$\N=\{1,2,\ldots\}$.  We also recall that a solvable Lie group is called
\emph{exponential} when its exponential map is a global diffeomorphism; in
particular, such a group is connected and simply connected.

\subsection{Historical position and scope of the novelty}
\label{subsec:history-novelty}

The frame inequality entered harmonic analysis through the nonharmonic
Fourier series of Duffin and Schaeffer \cite{DuffinSchaeffer1952}.
Subsequently, the coherent-state viewpoint organized overcomplete systems as
orbits of a group representation \cite{Perelomov1972}, and Gabor and affine
systems became central examples \cite{DaubechiesGrossmannMeyer1986}.  In
particular, for a square-integrable
irreducible representation, the Duflo--Moore orthogonality relations show that
an admissible vector generates, after normalization, a continuous tight frame
(or a continuous tight frame modulo the subgroup on which the representation
acts projectively) \cite{DufloMoore1976}.  Thus square-integrability supplies
a continuous reproducing formula before the question of discrete sampling is
addressed.

Coadjoint-orbit theory provides a different, but complementary, point of
view.  For exponential solvable
groups, the Kirillov--Puk\'anszky picture and induction from polarizations
organize the irreducible dual in terms of coadjoint geometry
\cite{Pukanszky1968,ArnalCurrey2020}.  This supplies the structural origin of
the representations and of their projective kernels, but it does not by
itself select one vector and a discrete set whose orbit satisfies frame
bounds.  The present construction uses those representation-theoretic inputs
only to obtain coefficient decay and conjugation escape.  Once these two
inputs have been established, the frame itself is constructed by a separate
storage argument.

The relationship with coorbit theory deserves a more precise explanation.
Classical coorbit theory, initiated by Feichtinger and Gr\"ochenig, starts
from an integrable irreducible unitary representation and a sufficiently
localized analyzing vector $g$.  In terms of the voice transform
\[
   V_gf(x)=\langle f,\pi(x)g\rangle,
\]
the hypothesis is expressed through weighted integrability of the reproducing
kernel $V_gg$.  The theory constructs Banach spaces by imposing function-space
norms on $V_gf$ and, for sufficiently dense well-spread sampling sets,
produces atomic decompositions and Banach frames simultaneously across the
associated scale of coorbit spaces
\cite{FeichtingerGrochenig1988,FeichtingerGrochenig1989I,
FeichtingerGrochenig1989II}.  Oscillation estimates later gave direct sampling
theorems for square-integrable representations and discrete series
\cite{FuehrGrochenig2007}.  These results provide substantially more
localization and coefficient-space information than is sought here, but their
starting assumptions do not hold for an arbitrary irreducible representation.
In particular, the mere vanishing of matrix coefficients at infinity does
not imply the square- or absolute-integrability required by those
constructions.  Thus the hypotheses used here and those used in classical
coorbit theory address genuinely different representation-theoretic regimes.

There is also a complementary line of work which begins with an arbitrary
continuous frame rather than with a group representation.  Fornasier and
Rauhut obtained discretization
under localization hypotheses by extending coorbit methods
\cite{FornasierRauhut2005}.  Freeman and Speegle subsequently proved, in
particular, that every bounded continuous frame can be sampled to obtain a
discrete frame \cite{FreemanSpeegle2019}.  A unitary orbit has constant vector
norm, so it is bounded; nevertheless, this theorem becomes applicable to an
orbit only after one knows that the orbit is a continuous frame.  It therefore
does not manufacture an admissible vector for a representation that is not
square-integrable.  Moreover, its abstract sampling theorem allows repeated
sample points and does not assert quotient injectivity, relative separation
in a group quotient, or arbitrarily near-Parseval bounds.

In addition to these general discretization theorems, there are important
representation-specific constructions which do not pass through classical
coorbit theory.  Under a semidirect-product hypothesis
$G=P\rtimes M$ with a normal inducing subgroup $P$, explicit tight frames for
certain irreducible representations of completely solvable groups were
constructed in \cite{Oussa2018}.  The later work \cite{Oussa2019} treats
representations induced from characters of a normal exponential subgroup
under a coadjoint-immersion hypothesis; the representation need not be
irreducible or integrable, and the construction yields compactly supported
bounded, continuous or smooth windows, with Parseval conclusions in the
exponential case.  For nilpotent groups, Gr\"ochenig and Rottensteiner
constructed orbit orthonormal bases for square-integrable representations
modulo the center under broad structural hypotheses
\cite{GrochenigRottensteiner2018}; Oussa subsequently proved the
orthonormal-basis conclusion for every generic irreducible representation of
a connected, simply connected nilpotent Lie group, hence in particular for
every representation square-integrable modulo the center \cite{Oussa2024}.
On the classes treated in those papers, the resulting exact tight-frame and
orthonormal-basis conclusions are stronger than a near-Parseval conclusion.
The irreducible orbit-frame problem in the wider generality considered here
was isolated explicitly in \cite{Oussa2026}; the question is therefore how
far one can proceed after the assumptions that yield those exact
constructions are removed.

Against this background, the present paper begins at a different point.
Theorem~\ref{thm:abstract-storage} does not discretize a pre-existing
continuous frame and assumes neither an admissible vector nor an integrable
reproducing kernel.  It starts from two directional weak-operator limits:
decay of a storage sequence and decay of the conjugates of nontrivial elements
near the identity.  The proof encodes a redundant Parseval target into one
square-summable storage vector and uses conjugation to retrieve its columns
with a prescribed synthesis-operator error.  This produces, for every
$\varepsilon>0$, a one-vector orbit frame with bounds
$(1-\varepsilon)^2$ and $(1+\varepsilon)^2$, while simultaneously making the
sampling set injective and relatively separated in the quotient by the
projective kernel.  Consequently, the new abstract mechanism is the
storage-and-retrieval step itself, rather than the sampling of a continuous
reproducing formula.

We first apply this mechanism to exponential solvable Lie groups, where it
combines projective $C_0$ decay with conjugation escape.  Relative to the
preceding results, the
new scope of Theorem~\ref{thm:main} is its uniform quantification over
\emph{every} infinite-dimensional irreducible unitary representation of
\emph{every} exponential solvable Lie group, without a square-integrability,
normal-polarization, semidirect-product, rationality, or prescribed
induced-model hypothesis.  The price for this wider scope is equally
important: in the nonabelian branch the construction does not provide an
exact Parseval frame, a lattice or subgroup of sampling points, or a compactly
supported, smooth, or coorbit-localized generator.  When the effective
quotient is abelian, however, the situation is different.  In that case the
classical Weyl--Stone--von Neumann structure yields an orthonormal basis and
hence a stronger conclusion.

Once the abstract mechanism has been separated from its
representation-theoretic input, it becomes possible to pass beyond solvable
groups.
Howe--Moore vanishing is an established theorem
\cite{HoweMoore1979,Ciobotaru2014}, and root-subgroup expansion is standard
structure theory; neither ingredient is claimed as new.  Their combination
with Theorem~\ref{thm:abstract-storage} yields the near-Parseval orbit-frame
conclusion for every infinite-dimensional irreducible representation of a
connected semisimple real Lie group with finite center and for every such
representation of $\operatorname{SL}_d(\mathbb k)$ over an arbitrary local
field.  The cited coorbit and continuous-frame discretization results already
cover many square-integrable, holomorphic, or otherwise admissible
representations in these settings, but they do not by themselves yield this
conclusion for irreducible representations that are not square-integrable.
Accordingly, the novelty is not the existence of coherent states, coorbit
discretizations, or special orbit bases; it is the representation-uniform
orbit-frame conclusion obtained from directional mixing and storage,
together with its quantitative near-Parseval and projective sampling geometry.

\begin{remark}[Boundary of the novelty claim]
For each $\varepsilon$ the vector and sampling set may change; the result does
not assert that one fixed orbit becomes asymptotically Parseval as
$\varepsilon\downarrow0$.  Relative separation is asserted in
$G/\pker(\pi)$, not necessarily in $G$.
Theorem~\ref{thm:local-field-main} is stated for
$\operatorname{SL}_d(\mathbb k)$; the root-group argument suggests wider
split almost-simple classes, but those should be advertised as extensions
only when all hypotheses are stated and proved.  Finally, no priority claim
rests solely on the recent preprint \cite{Oussa2026}; the novelty statement
above compares the hypotheses and conclusions directly.
\end{remark}

\subsection{Main results}

We now state the principal consequences of the paper.  The first theorem
settles the orbit-frame problem, with quantitative bounds and controlled
sampling geometry, throughout the exponential solvable class.

\begin{theorem}[Irreducible orbit frames]\label{thm:main}
Let $G$ be an exponential solvable Lie group and let
$\pi:G\to\U(\cH)$ be an infinite-dimensional irreducible strongly continuous
unitary representation on a separable Hilbert space.  For every
$0<\varepsilon<1$,
there exist $0\neq\phi\in\cH$ and a countable set $\Gamma\subset G$ such
that
\begin{equation}\label{eq:near-parseval-frame}
 (1-\varepsilon)^2\|f\|^2
 \leq \sum_{\gamma\in\Gamma}
       |\langle f,\pi(\gamma)\phi\rangle|^2
 \leq (1+\varepsilon)^2\|f\|^2
 \qquad(f\in\cH).
\end{equation}
Moreover, if $q:G\to G/\pker(\pi)$ is the quotient map, then $\Gamma$
may be chosen so that $q$ is injective on $\Gamma$ and $q(\Gamma)$ is left
relatively separated.  If $G/\pker(\pi)$ is abelian, one may choose $\phi$ to be a
unit vector and the orbit to be an orthonormal basis; in that case
$q(\Gamma)$ is
a lattice in the effective quotient.
\end{theorem}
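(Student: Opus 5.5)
The plan is to derive Theorem~\ref{thm:main} from the abstract storage theorem, Theorem~\ref{thm:abstract-storage}, by checking its two hypotheses for $\pi$ on an exponential solvable group. The hypotheses are weak-operator decay along a storage sequence, and weak-operator decay of the conjugates of every nontrivial element in a local retrieval family. The abelian branch will be handled separately by the Stone--von Neumann theorem. Write $K=\pker(\pi)$ and $Q=G/K$. Since $\pi$ is infinite dimensional, $\pi$ is not a character, so $Q$ is nontrivial and in fact noncompact. Indeed, $K$ is closed and connected modulo its intersection with the center (it is the preimage of the kernel of the projective representation), and $Q$ is again an exponential-type quotient. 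The induced projective representation $\dot\pi$ of $Q$ is faithful modulo scalars.

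\textbf{Step 1 (projective $C_0$ decay).} I would show that $|\langle\pi(g)u,v\rangle|\to0$ as $q(g)\to\infty$ in $Q$, for all $u,v\in\cH$. By density it suffices to take $u,v$ in a dense set of smooth vectors in the Kirillov--Puk\'anszky induced model $\pi=\operatorname{ind}_P^G\chi_\ell$ attached to a polarization at $\ell\in\mathfrak g^*$. I would argue by contradiction. Suppose $g_n\to\infty$ in $Q$ and $\pi(g_n)$ has a nonzero weak-operator limit point $T$. Then $T$ intertwines in a suitable sense: using the coadjoint orbit $\Omega=G\cdot\ell$ and the fact that $\pi(g_n)^*X\pi(g_n)$ tracks $\Ad^*(g_n)$, the limit $T^*T$ commutes with $\pi$ up to the part of $G$ that stabilizes the limiting direction. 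Irreducibility then forces $T$ to be a scalar multiple of a unitary in $\pi(G)''$ that acts projectively trivially, which contradicts $g_n\to\infty$ modulo $K$.

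I would organize this as a Howe--Moore/Mautner-type argument. Take a Jordan--H\"older series of ideals, and the corresponding Mautner phenomenon for exponential groups: elements of $\ker\ell$-directions along which $\Ad(g_n)$ contracts yield vectors invariant under a nontrivial connected subgroup not inside $K$. This is impossible for a faithful-mod-scalars irreducible representation without invariant vectors. This step is where I expect the main obstacle. Exponential groups may have non-real weights, and there is no Cartan decomposition. I would therefore first try induction on $\dim G$ via the Kirillov orbital method and the Leptin--Ludwig reduction: pass to a codimension-one ideal $G_0$, where $\pi$ either restricts irreducibly or is induced. Then transfer decay from $G_0$, using that matrix coefficients of induced representations are averages of coefficients of the inducing representation over a noncompact line.

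\textbf{Step 2 (conjugation escape).} I need a storage sequence, and I take any $s_n\in G$ with $q(s_n)\to\infty$; by Step 1 this gives decay. For retrieval, I need a neighborhood $U$ of the identity in $Q$ such that for each $1\neq x\in U$ there are $g_n$ with $g_nxg_n^{-1}\to\infty$ in $Q$. Then Step 1 gives weak decay of $\pi(g_nxg_n^{-1})$. Geometrically, if conjugation failed to escape for $x=\exp X$, the orbit $\Ad(G)X$ would be bounded mod $\mathfrak k$. For an exponential Lie algebra this forces $X$ into the ideal of elements whose adjoint orbit is bounded modulo $\mathfrak k$. I would show this ideal is central in $\mathfrak g/\mathfrak k$: its elements act unipotently with purely imaginary eigenvalues, and exponentiality excludes the latter. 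Then I would use that central elements of $Q$ act by scalars by Schur's lemma, so they lie in $K$. The only way escape can fail is therefore when $Q$ is abelian.

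\textbf{Step 3 (conclusion and abelian case).} In the nonabelian case, Theorem~\ref{thm:abstract-storage} gives $\phi$ and $\Gamma$ with the bounds \eqref{eq:near-parseval-frame}, with $q|_\Gamma$ injective and $q(\Gamma)$ relatively separated. If $Q$ is abelian, then the projective representation of the vector group $Q\cong\R^{2m}$ has a nondegenerate commutator cocycle; here $m\geq1$ is forced by infinite dimensionality. By Stone--von Neumann, $\pi$ is then unitarily equivalent, up to scalars, to the Schr\"odinger representation on $L^2(\R^m)$. Transport the lattice $\Z^m\times\Z^m$, which has unit covolume, and take the window $\phi=\mathbf 1_{[0,1)^m}$. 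The resulting Gabor system is an orthonormal basis, and $q(\Gamma)$ is a lattice in $Q$.
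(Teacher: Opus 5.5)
Your Step~2 does not verify the hypothesis that Theorem~\ref{thm:abstract-storage} actually uses, and the target it sets is false in general. A local mixing-expansion datum (Definition~\ref{def:mixing-expansion}) requires \emph{one} sequence $(h_m)$ that serves both as the storage sequence and as the conjugator. It also requires a continuous homomorphism $p:A\to G$ from a nondiscrete group, with $\pi(h_mp(a)h_m^{-1})\to0$ along that same sequence for every $a\in U\setminus\{e_A\}$. You instead take an arbitrary escaping $s_n$ for storage and, for each $x$, some $x$-dependent $g_n$.

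The coupling is essential in the storage proof. The retrieval points are $\gamma_{n,j}=p_{n,j}b_n^{-1}$, where $b_n=h_{M_n}$ is the storage element itself; this is what produces the main term $a_n\pi(p_{n,j})e_n$. The within-block Gram entries $\langle w_n,\pi(b_np_{n,j}^{-1}p_{n,k}b_n^{-1})w_n\rangle$ must then be small for all $N_n$ points of a block, with one and the same $b_n$.

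Moreover, escape for every nonidentity $x$ in an identity neighborhood of $Q$ can fail when $Q$ is nonabelian. The error is the claim that central elements of $Q$ act by scalars by Schur's lemma. A lift $\tilde z$ of a central element of $Q$ only satisfies $\pi(g)\pi(\tilde z)\pi(g)^{-1}\in\T\,\pi(\tilde z)$. This is exactly the Weyl phenomenon, and it does not make $\pi(\tilde z)$ scalar. Take $G=H_1\times\mathrm{Aff}^+$ and $\pi=\pi_\lambda\otimes\sigma$, the Schr\"odinger representation tensored with an infinite-dimensional irreducible representation of the affine group. Then $K=Z(H_1)\times\{e\}$ and $Q\cong\R^2\times\mathrm{Aff}^+$. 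This $Q$ is nonabelian but has the nontrivial connected center $\R^2\times\{e\}$, whose elements are fixed by conjugation. So no neighborhood $U$ of the kind you want exists, although the theorem holds for this $\pi$.

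What is needed, and what the paper proves, is escape along a single one-parameter direction. Choose $X_0\in\mathfrak q$ with $\ad X_0\neq0$, and $Y_0$ with $\|\exp(R\ad X_0)Y_0\|\to\infty$. Exponentiality excludes nonzero purely imaginary eigenvalues. So either some eigenvalue has nonzero real part (replace $X_0$ by $-X_0$ if needed), or $\ad X_0$ is nilpotent and a Jordan block gives polynomial growth. Lift to $X,Y\in\mathfrak g$ and take $h_m=\exp_G(mX)$, $p(t)=\exp_G(tY)$, $A=U=\R$. Then $q(h_mp(t)h_m^{-1})=\exp_Q\bigl(t\exp(m\ad X_0)Y_0\bigr)$, so by linearity every $t\neq0$ escapes along the same sequence. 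Projective $C_0$ decay then turns both escapes into the two WOT limits.

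Separately, Step~1 is not a proof as sketched. Nothing justifies that a weak limit point of $\pi(g_n)$ intertwines anything. The codimension-one induction also does not explain how decay in the direction transverse to $G_0$ is obtained. This is a published theorem of Bekka--Ludwig. You should cite it for diagonal coefficients and then polarize. Cite as well the connectedness of $K$ and exponentiality of $Q$, which you assert only loosely but need: Step~2 uses that $\exp_Q$ is a homeomorphism, and Step~3 uses $Q\cong\R^{2m}$.

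Your Step~3 is the paper's route in compressed form. Before invoking Stone--von Neumann, the paper additionally lifts the projective representation continuously, linearizes the commutator bicharacter, and matches the full multiplier.
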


The argument leading to the first theorem is formulated sequentially and does
not depend on Lie theory.  Over a local field, Howe--Moore decay supplies the
weak-operator limit, while a root subgroup supplies the required expansion.
Consequently, we obtain the following result.

\begin{theorem}[Special linear groups over local fields]
\label{thm:local-field-main}
Let $\mathbb k$ be a local field, let $d\geq2$, and let
\[
 \pi:\operatorname{SL}_d(\mathbb k)\longrightarrow\U(\cH)
\]
be an infinite-dimensional irreducible strongly continuous unitary
representation on a separable Hilbert space.  For every
$0<\varepsilon<1$, there exist $0\neq\phi\in\cH$ and a countable set
$\Gamma\subset\operatorname{SL}_d(\mathbb k)$ satisfying
\eqref{eq:near-parseval-frame}.  If
$q:\operatorname{SL}_d(\mathbb k)\to
\operatorname{SL}_d(\mathbb k)/\pker(\pi)$ is the quotient map, then
$\Gamma$ may be chosen so that $q|_\Gamma$ is injective and $q(\Gamma)$ is
left relatively separated.
\end{theorem}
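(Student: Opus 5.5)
The plan is to deduce Theorem~\ref{thm:local-field-main} from the abstract storage theorem, Theorem~\ref{thm:abstract-storage}, by checking its two sequential hypotheses for $G=\operatorname{SL}_d(\mathbb k)$ and a nontrivial irreducible $\pi$. The hypotheses are weak-operator decay $\pi(s_n)\to0$ along a storage sequence $(s_n)$, and weak-operator decay $\pi(t_m^{-1}ut_m)\to0$ for every nontrivial $u$ in a local retrieval family. The abstract theorem then yields, for each $0<\varepsilon<1$, a vector $\phi$ and a set $\Gamma$ with bounds $(1-\varepsilon)^2,(1+\varepsilon)^2$. It also gives $q|_\Gamma$ injective and $q(\Gamma)$ left relatively separated in $G/\pker(\pi)$. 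So everything reduces to producing these two sequences.

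\textbf{Step 1: the projective kernel.} Since $\operatorname{SL}_d(\mathbb k)$ is almost simple modulo its finite center $Z$, the normal subgroup $\pker(\pi)$ is either contained in $Z$ or is all of $G$ (up to finite index). The latter is excluded: $\pi$ would then be a projective character, contradicting infinite-dimensionality. Hence $\pker(\pi)\subset Z$ is finite, and $\pi$ has no nonzero $G$-invariant vectors.

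\textbf{Step 2: Howe--Moore.} By the Howe--Moore theorem over local fields \cite{HoweMoore1979,Ciobotaru2014}, every matrix coefficient $\langle\pi(g)\xi,\eta\rangle$ tends to $0$ as $g\to\infty$ in $G/Z$. Take the storage sequence $s_n=a^n$, where $a=\operatorname{diag}(\varpi^{-1},\varpi,1,\dots,1)$ with $|\varpi|<1$ (or $a=\operatorname{diag}(2,1/2,1,\dots)$ in the Archimedean case). Since $s_n\to\infty$ in $G/Z$, this gives $\pi(s_n)\to0$ weakly.

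\textbf{Step 3: retrieval by root-group expansion.} Take the local retrieval family to be a compact identity neighborhood $U$ in the root subgroup $N_{12}=\{I+xE_{12}\}$. For $u=I+xE_{12}$ with $x\neq0$ we have $a^{-m}ua^m=I+\alpha(a)^{-m}xE_{12}$, where the root character gives $|\alpha(a)^{-m}x|\to\infty$. So $t_m=a^m$ conjugates every nontrivial $u$ off to infinity in $N_{12}$, and hence in $G/Z$ since $N_{12}\cap Z=\{I\}$ and $N_{12}$ is closed. Howe--Moore then gives $\pi(t_m^{-1}ut_m)\to0$ weakly for each fixed $u\neq I$, which is exactly the pointwise (per element) decay the abstract theorem requires.

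The main obstacle is matching the precise form of the retrieval hypothesis in Theorem~\ref{thm:abstract-storage}. It may require the family to accumulate at the identity modulo $\pker(\pi)$, or be compatible with the quotient. I would handle this by choosing a sequence $u_j\in N_{12}\setminus\{I\}$ with $u_j\to I$; this is possible in both the Archimedean and non-Archimedean cases, since $\mathbb k$ is nondiscrete. Injectivity of $q$ on $N_{12}$ holds because the kernel $\pker(\pi)$ is central and meets $N_{12}$ trivially. With this, separability and strong continuity are inherited and the conclusions transfer verbatim.
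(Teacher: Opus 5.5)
Your strategy is the paper's: Howe--Moore over $\mathbb k$ for the storage limit, a diagonal element expanding the root group $\{I+tE_{12}\}$ for the conjugation limit, then Theorem~\ref{thm:abstract-storage} with $A=(\mathbb k,+)$. However, Step~3 is wrong as written, and it is the one step that requires a computation. For $a=\operatorname{diag}(\varpi^{-1},\varpi,1,\dots,1)$ one has $aE_{12}a^{-1}=(a_1/a_2)E_{12}=\varpi^{-2}E_{12}$. Hence
\[
 a^{-m}(I+xE_{12})a^{m}=I+\varpi^{2m}xE_{12}\longrightarrow I ,
\]
so $|\alpha(a)^{-m}x|=|\varpi|^{2m}|x|\to0$, not $\infty$. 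The same happens, with factor $4^{-m}$, for $\operatorname{diag}(2,1/2,1,\dots,1)$. Thus $\pi(t_m^{-1}ut_m)\to I$ strongly, the opposite of what is needed. This matters: if the conjugates in \eqref{eq:internal-gram} behaved like yours, those Gram entries would tend to $\|w_n\|^2>0$ rather than to $0$. The $N_n$ past-interference columns of block $n$ would then become nearly identical, and $\|R_-\|$ would be of order $\sqrt{N_n}\,\|w_n\|$ rather than below $\sqrt{\sigma^2+\eta^2}$.

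Behind the error is a misreading of the hypothesis. Definition~\ref{def:mixing-expansion} does not allow a storage sequence plus an independent conjugating sequence. One sequence $(h_m)$ must satisfy both $\pi(h_m)\to0$ and $\pi(h_mp(a)h_m^{-1})\to0$. The reason is that $b_n=h_{M_n}$ stores $e_n$, and the retrieval points $\gamma_{n,j}=p_{n,j}b_n^{-1}$ then produce exactly the conjugates $h_rp_{n,j}^{-1}p_{n,k}h_r^{-1}$.

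The repair is one line and gives exactly the paper's proof. Keep $h_m=a^m$ and conjugate on the side the definition prescribes:
\[
 h_m(I+tE_{12})h_m^{-1}=I+\varpi^{-2m}tE_{12}.
\]
This is \eqref{eq:local-field-root-expansion} with $c=\varpi^{-1}$. For $t\neq0$ the $(1,2)$ entry is unbounded, so Howe--Moore gives the second limit.

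Your closing worry then disappears. The theorem asks for a nondiscrete group $A$ and a continuous homomorphism $p$, not a sequence $u_j\to I$, because the construction needs decay for the differences $a_{n,j}^{-1}a_{n,k}$. Taking $A=U=(\mathbb k,+)$ with $p(t)=I+tE_{12}$ supplies this directly.

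Step~1 is correct (e.g.\ by simplicity of $\operatorname{PSL}_d(\mathbb k)$ and perfectness of $\operatorname{SL}_d(\mathbb k)$), but unnecessary. The storage theorem works modulo whatever $\pker(\pi)$ is, and the paper uses only that $\pi$ has no nonzero invariant vectors. Likewise, injectivity of $q$ on $N_{12}$ plays no role, since injectivity on $\Gamma$ is an output of the theorem.
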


Over the real field, the same mechanism applies more generally to connected
semisimple groups.  We therefore obtain a result that lies genuinely outside
the solvable category.

\begin{theorem}[Semisimple real Lie groups]\label{thm:simple-main}
Let $G$ be a connected semisimple real Lie group with finite center,
and let $\pi:G\to\U(\cH)$ be an infinite-dimensional irreducible strongly
continuous unitary representation on a separable Hilbert space.  For every
$0<\varepsilon<1$, there exist $0\neq\phi\in\cH$ and a countable set
$\Gamma\subset G$ satisfying \eqref{eq:near-parseval-frame}.  If
$q:G\to G/\pker(\pi)$ is the quotient map, then $\Gamma$ may be chosen so
that $q|_\Gamma$ is injective and $q(\Gamma)$ is left relatively separated.
\end{theorem}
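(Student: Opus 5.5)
We derive Theorem~\ref{thm:simple-main} from the abstract storage theorem, Theorem~\ref{thm:abstract-storage}. Its hypotheses for $\pi$ are of two kinds. First, we need a \emph{storage sequence} $(s_n)$ in $G$ with $\pi(s_n)\to0$ in the weak-operator topology. Second, we need a \emph{local retrieval family}: an identity neighborhood $U\subset G$, which may be taken modulo $K=\pker(\pi)$, together with a sequence $(a_n)$ such that $\pi(a_n x a_n^{-1})\to0$ weakly for every $x\in U\setminus K$. Once both are verified, the abstract theorem gives, for each $0<\varepsilon<1$, a vector $\phi$ and a set $\Gamma$ with the bounds \eqref{eq:near-parseval-frame}, with $q|_\Gamma$ injective and $q(\Gamma)$ left relatively separated. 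The whole proof therefore reduces to producing these two sequences.

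\textbf{Step 1: reduction to an active factor.} Write $\mathfrak g=\mathfrak g_1\oplus\cdots\oplus\mathfrak g_m$ as a sum of simple ideals, and let $G_i$ be the corresponding connected normal subgroups. They commute, and $G=G_1\cdots G_m$ because the center is finite. Since $\pi$ is irreducible, Schur's lemma forces $\pi(Z(G))$ to be scalar. Suppose every noncompact factor satisfied $\pi(G_i)\subset\T I$. Then $\pi$ restricted to the compact factors $G_c=\prod_{\text{compact}}G_i$ would still be irreducible modulo scalars, hence finite-dimensional, contradicting $\dim\cH=\infty$. So there is a noncompact simple factor $H=G_i$ that is \emph{active}, meaning $\pi(H)\not\subset\T I$. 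Moreover, $K\cap H$ is a normal subgroup of $H$ that contains no open neighborhood, so $K\cap H$ is central in $H$ and hence finite. The actual input is Howe--Moore: for a connected simple noncompact $H$ with finite center, any unitary representation without $H$-invariant vectors has coefficients in $C_0(H)$. I will apply this to $\pi|_H\otimes\overline{\chi}$, adjusted by the central character. More precisely, decompose $\pi|_H$ according to the finite group $Z(H)$. An $H$-invariant line would give a character of $H$, which is trivial because $H$ is perfect, and then $\pi(H)$ would act trivially on that vector. Irreducibility of $\pi$ together with commutation of $H$ with the other factors would then force $\pi(H)$ to be scalar, which contradicts activity. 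Hence $\langle\pi(h)u,v\rangle\to0$ as $h\to\infty$ in $H$.

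\textbf{Step 2: the two sequences.} Choose a regular element $Y$ in a Cartan subspace $\mathfrak a\subset\mathfrak h$ and set $a_n=\exp(nY)$. Since $a_n\to\infty$ in $H$, Step 1 shows that $s_n=a_n$ is a storage sequence. For retrieval, take $x$ near the identity and outside $K$. Write $x$ in a neighborhood via the Bruhat/Iwasawa coordinates $x=\bar n\,m\,a\,n$, or more simply use $\log x=X\in\mathfrak g$ small. Conjugation acts by $\Ad(a_n)X=\sum_\alpha e^{n\alpha(Y)}X_\alpha+X_0$. To get conjugates that escape to infinity, I plan to replace the single $a_n$ by a direction depending on $x$. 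If the abstract theorem insists on a single retrieval sequence, I will instead use a countable family of directions and pass to a diagonal subsequence, as long as its formulation allows this. The key claim is that $a_n x a_n^{-1}\to\infty$ in $G/K$ for every $x\in U\setminus K$. The $H$-component of $\Ad(a_n)X$ involves some nonzero root vector in $\mathfrak h$, unless $X$ lies in the centralizer $\mathfrak z(Y)=\mathfrak m\oplus\mathfrak a$ or in other factors. In the bad case, where $X$ commutes with $Y$, I will perturb by choosing $a_n$ along $\exp(nY')$ for a second regular $Y'$ in a Weyl-conjugate chamber, or conjugate first by a fixed element $w$ so that $wXw^{-1}$ has a nonzero component outside the centralizer. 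This works because $X\notin\mathfrak k_{\text{center}}$ ensures $\Ad(H)X\not\subset\mathfrak z(Y)$. Decay then follows from Howe--Moore applied to $\pi(a_nxa_n^{-1})$, once we know that these elements leave compact subsets of $G/K$ through their $H$-component.

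\textbf{Main obstacle.} The delicate point is retrieval for elements $x$ whose $H$-component is trivial or central, that is, $x$ lying in other factors. For these, conjugation by elements of $H$ does nothing. I expect to handle this by choosing $U$ in a product form $U=U_H\cdot U'$ and noting that elements of inactive factors lie in $K$ when those factors are compact-and-scalar. For other active noncompact factors, I will run the same argument in each one simultaneously, using $a_n=\prod_i\exp(nY_i)$ over all active noncompact factors. The remaining hard case is a compact factor acting nontrivially. In that case, $x$ in a compact factor is never sent to infinity by conjugation. I would first try to show that such elements can be excluded by shrinking $U$ modulo $K$. If that fails, I would check whether Theorem~\ref{thm:abstract-storage} permits the retrieval family to be a small neighborhood transversal in a suitable subgroup, namely the product of the noncompact factors, and restrict to it, since retrieval only needs to separate columns indexed by the storage sequence.
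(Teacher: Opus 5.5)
Your Step~1 is sound and agrees with the paper. There is a noncompact simple factor $S$ on which $\pi$ is not scalar. The $S$-fixed vectors form a $G$-invariant subspace and hence vanish. Howe--Moore then gives $\pi(g_m)\to0$ in the weak operator topology whenever $g_m\to\infty$ in $S$, so your storage sequence $a_n=\exp(nY)$ is fine.

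\textbf{The gap is in Step~2, and it comes from misreading the hypothesis of Theorem~\ref{thm:abstract-storage}.} You require one sequence $(a_n)$ with $\pi(a_nxa_n^{-1})\to0$ for \emph{every} $x\notin K$ in an identity neighborhood of $G$. Your construction cannot meet this, and no choice of sequence can once a compact factor acts nontrivially. Three kinds of elements fail:
\begin{enumerate}
\item Every small $x\in\exp(\mathfrak m\oplus\mathfrak a)$, for instance $x=\exp(\epsilon Y)$, commutes with $a_n$. So $\pi(a_nxa_n^{-1})=\pi(x)$ is a fixed unitary. Yet $x\notin K$, because $K\cap S$ is discrete.
\item For $x=\exp X$ with $X$ in root spaces $\mathfrak s_\beta$ with $\beta(Y)<0$, one has $a_nxa_n^{-1}\to e$. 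Hence $\pi(a_nxa_n^{-1})\to I$ strongly, not to $0$.
\item Elements of a nontrivially acting compact factor are never sent to infinity by any conjugation.
\end{enumerate}
Your proposed repairs do not fit the theorem. An $x$-dependent direction violates the requirement of a single sequence $(h_m)$. Interleaving several directions also fails, since along the subsequence in direction $Y$ the elements centralizing $Y$ stay fixed. Even for $x$ with a nonzero positive-root component, your claim that $\exp(\Ad(a_n)X)$ leaves compact sets is not justified, because $\exp$ is not proper on a semisimple group. The proposal therefore ends with its main obstacle unresolved.

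\textbf{The missing idea.} Definition~\ref{def:mixing-expansion} asks only for a continuous homomorphism $p:A\to G$ from a nondiscrete group $A$. Conjugation decay is required only for $p(a)$ with $a\in U\setminus\{e_A\}$, because the retrieval points are taken inside $p(U)$, never in a full neighborhood of $G$. The paper's choice is as follows.
\begin{itemize}
\item Take a restricted root $\alpha$ of $\mathfrak s$, an element $Z\in\mathfrak a$ with $\alpha(Z)>0$, and a nonzero $X_\alpha\in\mathfrak s_\alpha$.
\item Set $A=U=(\R,+)$, $p(t)=\exp(tX_\alpha)$, and $h_m=\exp(mZ)$.
\item Then $h_mp(t)h_m^{-1}=p(\e^{m\alpha(Z)}t)$ exactly.
\item The split torus and the root subgroup are closed with proper parametrizations. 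Hence $h_m\to\infty$, and $h_mp(t)h_m^{-1}\to\infty$ in $S$ for every $t\neq0$.
\item Howe--Moore on $S$ gives both weak-operator limits, and Theorem~\ref{thm:abstract-storage} yields the conclusion.
\end{itemize}
With this choice the centralizer, the contracted directions and the compact factors never need to be retrieved, so your ``main obstacle'' disappears entirely.
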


Although these three theorems concern different classes of groups, they have
the same analytic engine, namely Theorem~\ref{thm:abstract-storage}.  Its most
economical hypotheses are directional: one asks for a nondiscrete topological
group $A$,
a continuous homomorphism $p:A\to G$, and a sequence $(h_m)$ in $G$ such that
\begin{equation}\label{eq:intro-weak-escape}
 \pi(h_m)\xrightarrow{\mathrm{WOT}}0,
 \qquad
 \pi\bigl(h_m p(a)h_m^{-1}\bigr)
       \xrightarrow{\mathrm{WOT}}0
 \quad\text{for every nonidentity $a$ near $e_A$}.
\end{equation}
Neither solvability, Lie structure, nor irreducibility is used in the storage
theorem.  Moreover, projective $C_0$ decay together with escape in $G/K$
implies \eqref{eq:intro-weak-escape}.  To prove Theorem~\ref{thm:main}, published
projective-$C_0$ results and exponential Lie-algebra geometry supply these
inputs when the effective quotient is nonabelian, whereas an abelian
effective quotient is placed in Weyl normal form.  In the semisimple real
case, Howe--Moore decay and a restricted-root subgroup provide the same two
inputs directly.  Finally, over an arbitrary local field, a diagonal sequence
expands a single matrix root subgroup.  Figure~\ref{fig:scope-map} summarizes
these parallel routes to the storage theorem.

\begin{figure}[t]
\centering
\resizebox{\linewidth}{!}{%
\begin{tikzpicture}[
  font=\small,
  >={Latex[length=2.4mm]},
  source/.style={draw=OrbitBlue, very thick, rounded corners=2pt,
    fill=OrbitBlue!7, align=center, text width=3.7cm, minimum height=1.25cm},
  engine/.style={draw=OrbitInk, very thick, rounded corners=2pt,
    fill=OrbitMist, align=center, text width=5.2cm, minimum height=1.35cm},
  result/.style={draw=OrbitTeal, very thick, rounded corners=2pt,
    fill=OrbitTeal!8, align=center, text width=5.2cm, minimum height=1.25cm},
  side/.style={draw=OrbitGold, thick, rounded corners=2pt,
    fill=OrbitGold!10, align=center, text width=3.5cm, minimum height=1.1cm},
  link/.style={->, very thick, draw=OrbitInk}
]
\node[source] (solv) at (-4.7,2.35)
  {Exponential solvable\par irreducible representations\par
   {\scriptsize projective $C_0$ $+$ escape}};
\node[source] (real) at (0,2.35)
  {Semisimple real groups\par with finite center\par
   {\scriptsize Howe--Moore $+$ restricted roots}};
\node[source] (local) at (4.7,2.35)
  {$\operatorname{SL}_d(\mathbb k)$\par over any local field\par
   {\scriptsize Howe--Moore $+$ root matrices}};
\node[engine] (engine) at (0,0)
  {Sequential weak-operator storage\par
   local retrieval $+$ directional mixing};
\node[result] (frame) at (0,-2.15)
  {One-vector near-Parseval orbit frame\par
   quotient-injective, relatively separated sampling};
\node[side] (weyl) at (-5.0,-2.15)
  {Abelian effective quotient\par $\Rightarrow$ Weyl-lattice ONB};

\draw[link] (solv) -- (engine);
\draw[link] (real) -- (engine);
\draw[link] (local) -- (engine);
\draw[link] (engine) -- (frame);
\draw[->,very thick,dashed,draw=OrbitGold] (solv.south west)
  to[bend right=22] (weyl.north);
\end{tikzpicture}
}
\caption{Logical scope of the paper.  The central box is group-agnostic;
the upper boxes provide independent sources of its two directional WOT
limits.  Only the abelian effective quotient uses the separate Weyl branch.}
\label{fig:scope-map}
\end{figure}

\subsection{Architecture and organization}

We conclude the introduction by describing the organization of the proof.
There are five conceptual components, and each one supplies an input needed
by the next.
\begin{enumerate}[label=\textnormal{\textbf{Step \arabic*.}},leftmargin=*]
\item A synthesis-operator perturbation lemma converts operator-norm
closeness to a Parseval frame into quantitative frame bounds.  A separate
Bessel-orbit lemma upgrades the sampling geometry to relative separation.
\item The sequential storage theorem encodes a highly redundant Parseval
frame into one vector.  Conjugated weak-operator decay separates entries
within each block, ordinary weak-operator decay separates a new block from
all preceding blocks, and superdecreasing weights control every future block.
\item Noncommutativity and the spectral characterization of exponential Lie
algebras produce the required escape directions for nonabelian effective
quotients of exponential solvable groups.
\item Howe--Moore decay and root-subgroup expansion prove the nonsolvable
semisimple real and local-field applications.
\item An abelian effective quotient in the exponential solvable case is
reduced to the standard Weyl representation by multiplier cohomology and
Stone--von Neumann uniqueness.
\end{enumerate}

These five components also determine the organization of the paper.
Section~\ref{sec:preliminaries} develops the analytic and
representation-theoretic preliminaries.  Section~\ref{sec:storage} proves the
abstract storage theorem.  Section~\ref{sec:geometry} constructs conjugation
escape for exponential solvable groups.  Section~\ref{sec:beyond-exponential}
proves Theorems~\ref{thm:simple-main} and \ref{thm:local-field-main}.
Section~\ref{sec:abelian} treats the
abelian effective quotient, and Section~\ref{sec:proof-main} proves
Theorem~\ref{thm:main}.  Section~\ref{sec:B4-model} gives a complete
nonnilpotent $4\times4$ triangular model.  Appendix~\ref{app:lean} records the exact scope of
the machine-checked supplement and links to its Lean source files.

\section{Analytic and representation-theoretic preliminaries}
\label{sec:preliminaries}

\subsection{Synthesis operators and stable perturbations}

We begin with the elementary perturbation principle that will convert the
storage estimates into frame bounds.  Let $I$ be countable.  Every family
$(u_i)_{i\in I}$ in $\cH$ has an
algebraic synthesis map
\[
 D_{u,\mathrm{fin}}:c_{00}(I)\longrightarrow\cH,
 \qquad D_{u,\mathrm{fin}}c=\sum_{i\in I}c_i u_i.
\]
The family is \emph{Bessel} when this map extends to a bounded operator
$D_u:\ell^2(I)\to\cH$.  Its adjoint is then the analysis operator
$D_u^*f=(\langle f,u_i\rangle)_{i\in I}$.  The family is Parseval precisely
when $D_u^*$ is an isometry.  Thus it is enough to control the synthesis
operator of the orbit family in norm, and the following lemma gives the
required quantitative statement.

\begin{lemma}[Synthesis perturbation]
\label{lem:synthesis-perturbation}
Let $(u_i)_{i\in I}$ be a Parseval frame with synthesis operator $D_0$, and
let $(v_i)_{i\in I}$ be a Bessel family with synthesis operator $D$.  If
\begin{equation}\label{eq:synthesis-perturbation-hypothesis}
   \|D-D_0\|\leq E<1,
\end{equation}
then $(v_i)_{i\in I}$ is a frame with bounds $(1-E)^2$ and $(1+E)^2$.
\end{lemma}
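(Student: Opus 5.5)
The plan is to work entirely with adjoints, that is, with analysis operators, and to read off both frame bounds from the triangle inequality. Since $(u_i)$ is Parseval, $D_0^*:\cH\to\ell^2(I)$ is an isometry: $\|D_0^*f\|=\|f\|$ for every $f\in\cH$. Because $(v_i)$ is Bessel, $D$ is bounded. Hence $D^*$ is bounded and $D^*f=(\langle f,v_i\rangle)_{i\in I}$, so that
\[
 \sum_{i\in I}|\langle f,v_i\rangle|^2=\|D^*f\|^2 .
\]
It therefore suffices to show that $(1-E)\|f\|\leq\|D^*f\|\leq(1+E)\|f\|$ for every $f\in\cH$.

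The key step is the identity $\|D^*-D_0^*\|=\|(D-D_0)^*\|=\|D-D_0\|\leq E$, which holds because taking adjoints is isometric and conjugate-linear, so it respects differences. For each $f\in\cH$ this gives
\[
 \bigl|\,\|D^*f\|-\|D_0^*f\|\,\bigr|\leq\|(D^*-D_0^*)f\|\leq E\|f\| .
\]
Substituting $\|D_0^*f\|=\|f\|$ yields $(1-E)\|f\|\leq\|D^*f\|\leq(1+E)\|f\|$. Since $E<1$, the left-hand side is nonnegative, so squaring preserves both inequalities. This gives frame bounds $(1-E)^2$ and $(1+E)^2$, and the lower bound is strictly positive, so $(v_i)$ is a genuine frame.

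There is essentially no obstacle here. The only point that needs care is bookkeeping: the synthesis operators are defined first on $c_{00}(I)$ and then extended, so one must confirm that the adjoint of the bounded extension $D$ really is the coefficient map $f\mapsto(\langle f,v_i\rangle)$. This follows by testing $D^*f$ against the standard basis vectors $\delta_i\in c_{00}(I)$, using $D\delta_i=v_i$ and the convention that the inner product is linear in the first variable. The same check applies to $D_0$.
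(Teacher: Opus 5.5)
Your proposal is correct and follows the paper's own proof: the paper likewise uses $\|D_0^*f\|=\|f\|$, bounds $\|D^*f\|$ above and below by the triangle inequality and its reverse with $\|(D-D_0)^*\|\leq E$, and then squares. Your added remarks that $1-E>0$ justifies squaring and that the adjoint of the extended synthesis map is the coefficient map only make explicit details the paper leaves tacit.
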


\begin{proof}
Since $(u_i)$ is Parseval, $\|D_0^*f\|=\|f\|$.  We may therefore compare the
two analysis operators directly.  The triangle inequality and its reverse
give
\[
 \|D^*f\|
 \leq \|D_0^*f\|+\|(D-D_0)^*f\|
 \leq (1+E)\|f\|
\]
and
\[
 \|D^*f\|
 \geq \|D_0^*f\|-\|(D-D_0)^*f\|
 \geq (1-E)\|f\|.
\]
After squaring these two estimates, we obtain the claimed frame inequalities.
\end{proof}

\begin{example}[A diagonal perturbation]\label{ex:diagonal-perturbation}
Let $(e_n)$ be an orthonormal basis and let
$v_n=\lambda_ne_n$, where $|\lambda_n-1|\leq E<1$.  Relative to the standard
basis of $\ell^2$, the difference of the two synthesis operators is the
diagonal operator with entries $\lambda_n-1$.  Hence its norm is at most
$E$, and Lemma~\ref{lem:synthesis-perturbation} gives frame bounds
$(1-E)^2$ and $(1+E)^2$.  The storage construction below is an infinite,
nondiagonal version of this elementary comparison.
\end{example}

\subsection{Decay modulo the projective kernel}

We next turn to the representation-theoretic input.  Since scalar operators
do not affect coefficient magnitudes, all decay statements must be
formulated on the quotient by the projective kernel.  For this purpose,
equip $\U(\cH)$ with the strong operator topology and
\[
 \PU(\cH)=\U(\cH)/(\T I_{\cH})
\]
with the quotient topology.  A homomorphism into $\PU(\cH)$ is
\emph{projectively irreducible} when its unitary representatives have no
nontrivial common closed invariant subspace.

Fix a finite-dimensional Lie group $G$ and an irreducible strongly continuous
unitary representation $\pi:G\to\U(\cH)$.  Set $K=\pker(\pi)$ and
$Q=G/K$.  The following elementary dictionary distinguishes the three
quotient and lifting operations that will occur later and thereby fixes the
notation used throughout the remainder of the paper.

\begin{proposition}[The quotient dictionary]
\label{prop:quotient-dictionary}
Let $q:G\to Q$ be the quotient map.
\begin{enumerate}[label=\textnormal{(\roman*)}]
\item $K$ is a closed normal subgroup of $G$.
\item For $u,v\in\cH$, the function
\[
 gK\longmapsto |\langle u,\pi(g)v\rangle|
\]
is well defined and continuous on $Q$.
\item The formula
\[
 \overline\pi(gK)=[\pi(g)]
\]
defines a faithful continuous homomorphism
$\overline\pi:Q\to\PU(\cH)$.  It is projectively irreducible whenever
$\pi$ is irreducible.
\item The differential $dq_e:\mathfrak g\to\mathfrak q$ is surjective.  If
$dq_e(X)=\overline X$, then
\begin{equation}\label{eq:quotient-exp-intertwining}
 q\bigl(\exp_G(tX)\bigr)=\exp_Q(t\overline X)
 \qquad(t\in\R).
\end{equation}
Thus every one-parameter subgroup of $Q$ has a one-parameter lift to $G$.
\end{enumerate}
\end{proposition}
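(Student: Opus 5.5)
We verify the four items in order, working with the quotient map $\rho:\U(\cH)\to\PU(\cH)$ and the closed central subgroup $\T I_{\cH}$ of $\U(\cH)$. Two preliminary facts are needed. First, $\T I_{\cH}$ is strongly closed: if $\lambda_\alpha I\to T$ strongly, then for a fixed unit vector $u$ we have $\lambda_\alpha=\langle\lambda_\alpha u,u\rangle\to\langle Tu,u\rangle=:\lambda$, and then $T=\lambda I$. Second, $\U(\cH)$ with the strong topology is a topological group, so $\PU(\cH)$ is a Hausdorff topological group and $\rho$ is continuous and open.

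For (i), note that $K=(\rho\circ\pi)^{-1}(\{e\})$. This preimage is closed because $\rho\circ\pi$ is continuous, and it is a normal subgroup because it is the kernel of a homomorphism.

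For (ii), if $k\in K$ then $\pi(gk)=\pi(g)\lambda_k$ with $|\lambda_k|=1$. Hence $|\langle u,\pi(gk)v\rangle|=|\langle u,\pi(g)v\rangle|$, and the function is well defined on cosets. The function $g\mapsto|\langle u,\pi(g)v\rangle|$ is continuous on $G$ by strong continuity. Since $q$ is an open continuous surjection, a function on $Q$ is continuous exactly when its composition with $q$ is continuous, which gives continuity on $Q$.

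For (iii), $\overline\pi$ is the map induced on $G/K$ by the continuous homomorphism $\rho\circ\pi$, whose kernel is exactly $K$. It is therefore a well-defined homomorphism, and it is injective, hence faithful. Continuity follows from the universal property of the quotient topology on $Q$, again because $q$ is open. Projective irreducibility holds because the unitary representatives of $\overline\pi(gK)$ are the operators $\lambda\pi(g)$ with $\lambda\in\T$. These have exactly the same closed invariant subspaces as $\pi(G)$, and by irreducibility of $\pi$ the only such subspaces are $0$ and $\cH$.

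For (iv), the only nonformal step is to know that $Q$ is a Lie group. By (i) and Cartan's closed subgroup theorem, $K$ is a closed Lie subgroup of $G$. Therefore $Q=G/K$ carries a unique manifold structure making it a Lie group, and $q$ is a smooth submersion; this is the standard quotient theorem. Surjectivity of $dq_e$ is exactly the submersion property, with $\ker dq_e=\mathfrak k$, so that $\mathfrak q\cong\mathfrak g/\mathfrak k$. Next, $q$ is a smooth homomorphism. For a smooth homomorphism $\varphi$ of Lie groups, $t\mapsto\varphi(\exp tX)$ is a one-parameter subgroup with initial velocity $d\varphi_e X$. By uniqueness of one-parameter subgroups it equals $\exp(t\,d\varphi_eX)$, and taking $\varphi=q$ gives \eqref{eq:quotient-exp-intertwining}. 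Finally, every one-parameter subgroup of $Q$ has the form $t\mapsto\exp_Q(tY)$. Choosing $X$ with $dq_e(X)=Y$ by surjectivity, the subgroup $t\mapsto\exp_G(tX)$ is the required lift.

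The main obstacle is conceptual rather than computational: it is confirming that $K$ is closed, since that closedness is what makes $Q$ a Lie group and underlies everything in (iv). The argument above reduces it to the strong closedness of $\T I_{\cH}$ in $\U(\cH)$, which is the key lemma to check carefully.
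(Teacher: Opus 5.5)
Your proposal is correct and follows essentially the same route as the paper. In both arguments, (i) comes from strong closedness of $\T I_{\cH}$; (ii) and (iii) come from constancy on cosets plus the quotient topology; projective irreducibility comes from the fact that the representatives $\lambda\pi(g)$ have the same invariant subspaces as $\pi(G)$; and (iv) comes from naturality of the exponential map under the Lie homomorphism $q$. Routing (i) through the Hausdorff group $\PU(\cH)$ and invoking Cartan's closed-subgroup theorem to make $q$ a submersion only makes explicit what the paper takes for granted.
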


\begin{proof}
We verify the assertions in the order in which they are stated.  First, the
scalar subgroup $\T I_{\cH}$ is closed in $\U(\cH)$ with the strong
operator topology, so $K=\pi^{-1}(\T I_{\cH})$ is closed.  If $k\in K$,
then $\pi(k)$ is scalar and consequently
$\pi(gkg^{-1})=\pi(k)$ for every $g\in G$; hence $K$ is normal.

Write $\pi(k)=\zeta(k)I_{\cH}$ for $k\in K$.  Since $|\zeta(k)|=1$,
\[
 |\langle u,\pi(gk)v\rangle|
 =|\langle u,\pi(g)v\rangle|.
\]
Thus the coefficient magnitude is constant on cosets, and its
continuity on $Q$ follows from the quotient topology.

Similarly, the projective class $[\pi(g)]$ is constant on cosets.  Its kernel
in $Q$ is trivial by the definition of $K$, which proves faithfulness.
Continuity again follows from the quotient topology.  A closed subspace is
invariant under $\overline\pi(Q)$ precisely when it is invariant under one,
and hence every, unitary representative of each projective class.  Such
subspaces are exactly the $\pi(G)$-invariant subspaces, proving the
irreducibility assertion.

It remains to establish the lifting assertion.  Since $q$ is a Lie-group
homomorphism, $dq_e$ is the quotient map of Lie algebras.  Lie-group
homomorphisms intertwine exponential maps, which gives
\eqref{eq:quotient-exp-intertwining} and completes the proof.
\end{proof}

\begin{remark}[Three different lifts]\label{rem:three-lifts}
Part~(iv) lifts a Lie-algebra vector and its one-parameter subgroup; it does
not provide a global section $Q\to G$.  Choosing one representative in $G$
for each point of a discrete subset of $Q$ is only a set-theoretic choice.
Finally, lifting a map $Q\to\PU(\cH)$ to a continuous map with values in
$\U(\cH)$ is a separate principal-bundle question, used only in the abelian
branch.
\end{remark}

The next lemma explains why relative separation need not be imposed as an
additional hypothesis once an orbit family is known to be Bessel.  More
precisely, continuity of a single diagonal coefficient gives a uniform local
counting estimate in the effective quotient.

\begin{lemma}[Bessel orbit families are relatively separated]
\label{lem:bessel-relative-separation}
Let $G$ be a second-countable locally compact group, let
$\pi:G\to\U(\cH)$ be strongly continuous, put $K=\pker(\pi)$, and let
$q:G\to Q=G/K$ be the quotient map.  Suppose that $0\neq\phi\in\cH$ and
that $\{\pi(\gamma)\phi:\gamma\in\Gamma\}$ is a Bessel family with
Bessel bound $B\geq0$.  Then $q(\Gamma)$ is left relatively separated.  More precisely,
there is a relatively compact identity neighborhood $V\subset Q$ such that
\begin{equation}\label{eq:relative-separation-bound}
  \sup_{x\in Q}\#\bigl(q(\Gamma)\cap xV\bigr)
  \leq \frac{4B}{\|\phi\|^2}.
\end{equation}
In particular, $q(\Gamma)$ is locally finite and therefore closed and
discrete.
\end{lemma}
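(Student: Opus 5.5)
The plan is to test the Bessel inequality against translates of $\phi$ itself and to use continuity of the diagonal coefficient on $Q$. Put $c(x)=|\langle \phi,\pi(g)\phi\rangle|$ for $x=gK$. By Proposition~\ref{prop:quotient-dictionary}(ii), whose proof uses only strong continuity and not the Lie structure, $c$ is well defined and continuous on $Q$, and $c(e)=\|\phi\|^2$. Choose an open identity neighborhood $W\subset Q$ with $c>\|\phi\|^2/2$ on $W$. Since $Q$ is locally compact, $W$ may be shrunk to be relatively compact. Then choose a relatively compact open symmetric identity neighborhood $V$ with $V^{-1}V\subset W$, that is, $V^2\subset W$ with $V=V^{-1}$.

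Now fix $x=hK\in Q$ and test the Bessel inequality on $f=\pi(h)\phi$, which has $\|f\|=\|\phi\|$. For $\gamma\in\Gamma$ we have
\[
|\langle f,\pi(\gamma)\phi\rangle|=|\langle \phi,\pi(h^{-1}\gamma)\phi\rangle|=c(x^{-1}q(\gamma)).
\]
If $q(\gamma)\in xV$, then $x^{-1}q(\gamma)\in V\subset W$, so each such term is at least $\|\phi\|^4/4$.

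The one subtlety is that several $\gamma$ could share the same image $q(\gamma)$, while we only need to count distinct points of $q(\Gamma)\cap xV$. Pick one representative $\gamma$ for each such point. Dropping the other terms only decreases the sum, so
\[
N\cdot\frac{\|\phi\|^4}{4}\le\sum_{\gamma}|\langle f,\pi(\gamma)\phi\rangle|^2\le B\|\phi\|^2,
\]
where $N=\#(q(\Gamma)\cap xV)$. Hence $N\le 4B/\|\phi\|^2$, uniformly in $x$, which is the bound \eqref{eq:relative-separation-bound}. This argument needs only $V\subset W$; the symmetric choice with $V^2\subset W$ is harmless but is not actually required.

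Local finiteness follows directly: every compact set is covered by finitely many translates $xV$, and each translate contains boundedly many points of $q(\Gamma)$. A locally finite subset of a Hausdorff space is closed and discrete.

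I do not expect a genuine obstacle. The only care points are handling repeated cosets in $\Gamma$ as above and confirming that the continuity in Proposition~\ref{prop:quotient-dictionary}(ii) holds for a general second-countable locally compact group. It does, because the map $g\mapsto\langle\phi,\pi(g)\phi\rangle$ is continuous on $G$ and constant in modulus on cosets of $K$, and $q$ is an open quotient map.
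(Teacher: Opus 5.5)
Your proposal is correct and follows essentially the same route as the paper. In both, continuity of the diagonal coefficient modulus on $Q$ gives a relatively compact identity neighborhood on which it is at least $\tfrac12\|\phi\|^2$; testing the Bessel inequality against $\pi(h)\phi$, with one representative per coset, then gives the bound $4B/\|\phi\|^2$ and local finiteness. The paper first bounds every finite subset of $q(\Gamma)\cap xV$ to rule out an a priori infinite count, which your inequality handles implicitly, and your symmetric refinement $V^2\subset W$ is, as you note, unnecessary.
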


\begin{proof}
We first obtain a uniform coefficient estimate near the identity.  The
coefficient magnitude
\[
  q(g)\longmapsto |\langle\phi,\pi(g)\phi\rangle|
\]
is well defined and continuous on $Q$, and its value at the identity coset
is $\|\phi\|^2$.  Choose a relatively compact identity neighborhood
$V\subset Q$ such that
\begin{equation}\label{eq:coefficient-near-identity}
 |\langle\phi,\pi(g)\phi\rangle|
 \geq \frac12\|\phi\|^2
 \qquad\text{whenever }q(g)\in V.
\end{equation}
Having fixed $V$, let $x\in Q$ and choose $g_x\in G$ with $q(g_x)=x$.  If
$q(\gamma)\in xV$, then $q(g_x^{-1}\gamma)\in V$, and unitarity gives
\[
 |\langle\pi(g_x)\phi,\pi(\gamma)\phi\rangle|
 =|\langle\phi,\pi(g_x^{-1}\gamma)\phi\rangle|
 \geq \frac12\|\phi\|^2.
\]
We now apply the Bessel inequality to a finite portion of this cluster.  Let
$F\subset q(\Gamma)\cap xV$ be finite, and choose one
$\gamma_y\in\Gamma$ with $q(\gamma_y)=y$ for each $y\in F$.  Testing the
Bessel inequality against $\pi(g_x)\phi$ and retaining only these distinct
indices yields
\[
 \#F\,\frac{\|\phi\|^4}{4}
 \leq B\|\phi\|^2,
\]
so every finite subset has cardinality at most $4B/\|\phi\|^2$.  Consequently,
$q(\Gamma)\cap xV$ itself is finite with that bound, proving
\eqref{eq:relative-separation-bound}.  A compact subset of $Q$
is covered by finitely many sets of the form $xV$, so $q(\Gamma)$ meets every
compact set in finitely many points.  In a locally compact Hausdorff space,
such a subset is closed and discrete.  This proves both the quantitative
bound and the asserted sampling geometry.
\end{proof}

Having obtained the geometric consequence of the Bessel condition, we now
introduce the decay property that will be used to separate the stored blocks.

\begin{definition}\label{def:projective-c0}
A strongly continuous unitary representation $\pi$ is
\emph{projectively $C_0$} if
\begin{equation}\label{eq:projective-c0-definition}
 gK\longmapsto |\langle u,\pi(g)v\rangle|
 \quad\text{belongs to }C_0(G/K)
\end{equation}
for every $u,v\in\cH$.
\end{definition}

Here and below, $x_\alpha\to\infty$ in a locally compact space means that
the net eventually leaves every compact subset.

The next theorem supplies the only substantial representation-theoretic
input in the nonabelian exponential-solvable construction.  We state its
scope explicitly because, once this decay is available, the subsequent
storage argument is entirely abstract.

\begin{theorem}[Projective $C_0$ input]
\label{thm:projective-c0-input}
Let $G$ be an exponential solvable Lie group and let $\pi$ be an irreducible unitary
representation.  With $K=\pker(\pi)$, the following hold:
\begin{enumerate}[label=\textnormal{(\roman*)}]
\item $K$ is connected;
\item $G/K$ is an exponential solvable Lie group;
\item for every $w\in\cH$, the function
\[
 gK\longmapsto |\langle w,\pi(g)w\rangle|
\]
belongs to $C_0(G/K)$.
\end{enumerate}
\end{theorem}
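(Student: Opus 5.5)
The plan is to derive all three assertions from the Kirillov--Bernat--Puk\'anszky orbit picture for $\pi$ and to treat (iii) as the deep input, quoting the published projective-$C_0$ theorem for exponential groups rather than reproving it.

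\emph{Steps (i) and (ii).} Fix $\ell\in\mathfrak g^*$ whose coadjoint orbit $\Omega=\Ad^*(G)\ell$ corresponds to $\pi$. I would first identify the Lie algebra of $K$ with the ideal
\[
\mathfrak k=\{X\in\mathfrak g:\ \Ad^*(\exp tX)\ \text{fixes } \Omega \text{ pointwise for all } t\}
=\{X:\ \ell'([X,\mathfrak g])=0\ \ \forall \ell'\in\Omega\}.
\]
This is the largest ideal on which every point of $\Omega$ restricts to the same character. The identification would use Kirillov's character formula: an element acts by a scalar exactly when it fixes the orbit pointwise. Next I would show $K\subset\exp\mathfrak k$. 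Let $g=\exp X\in K$. Then $\Ad^*(g)$ fixes $\Omega$ pointwise. Because $\exp$ is a diffeomorphism and $\ad X$ has no nonzero purely imaginary eigenvalues, the map $\Ad^*(\exp X)=e^{-\ad^*X}$ determines $\ad^*X$ on the $\Ad^*$-invariant affine hull of $\Omega$ via the logarithm. Hence $X\in\mathfrak k$, and $K=\exp\mathfrak k$ is connected. For (ii), a quotient of a simply connected solvable group by a connected closed normal subgroup is simply connected solvable. The spectral criterion (no eigenvalue of any $\ad\overline X$ on $\mathfrak g/\mathfrak k$ of the form $i s$ with $s\neq0$) passes to quotients, since the eigenvalues of $\ad\overline X$ are a subset of those of $\ad X$. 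So $G/K$ is exponential.

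\emph{Step (iii).} For the decay I would invoke the Howe--Moore-type theorem for exponential solvable groups in the literature (Lipsman and Rosenberg style results). It states that an irreducible representation has matrix coefficients vanishing at infinity modulo its projective kernel. If I had to sketch it, I would proceed by induction on $\dim G/K$ using Mackey theory.
\begin{itemize}[leftmargin=*]
\item Pass to $Q=G/K$, with $\overline\pi$ projective and faithful.
\item Choose a minimal noncentral ideal, or the center modulo $K$, and apply the Mackey machine.
\item Where coefficient decay fails along a sequence $q_n\to\infty$, a weak-operator limit point of $\overline\pi(q_n)$ commutes with a nontrivial one-parameter subgroup. By Mautner's phenomenon, that subgroup then acts by scalars, contradicting faithfulness of $\overline\pi$ modulo scalars.
\end{itemize}
This Mautner-plus-faithfulness step is where I expect the real obstacle. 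Exponential groups may have real but nonunipotent $\ad$ spectra, so the standard unipotent Mautner lemma must be replaced by the exponential version; this is why I would rely on the citation.

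Only diagonal coefficients are asserted, which already suffices. By polarization and density, decay for all $u,v$ follows if one wants the full Definition~\ref{def:projective-c0}.
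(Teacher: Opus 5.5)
Your route is correct in outline, and for (i)--(ii) it differs from the paper's. The paper proves none of the three items directly. Item (i) is quoted from \cite[Theorem~2.1]{BekkaLudwig1990}, (ii) follows from (i) and \cite[Corollary~1.8.5]{ArnalCurrey2020}, and (iii) is \cite[Lemma~2.2.1]{BekkaLudwig1990}. You instead derive (i) from the orbit picture, and you get more than connectedness: $K=\exp\mathfrak k$ with $\mathfrak k=\bigcap_{\ell'\in\Omega}\mathfrak g(\ell')$, where $\mathfrak g(\ell')=\{X:\ell'([X,\mathfrak g])=0\}$. Your logarithm step is sound. On $W=\operatorname{span}\Omega$, the operator $A=-(\ad X)^{T}|_W$ satisfies $\e^{A}=I$, so $A$ is semisimple with spectrum in $2\pi i\mathbb Z$. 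The exponentiality criterion, which the paper recalls only before Lemma~\ref{lem:lie-escape}, then forces $A=0$. Your (ii), via the homotopy sequence and the inheritance of the spectral criterion by $\mathfrak g/\mathfrak k$, reproves the cited corollary. The paper's version buys brevity and one source for the hard facts. Yours makes (i)--(ii) transparent, at the cost of two orbit-method facts that you must supply yourself.

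Those two facts need a better justification than Kirillov's character formula. That formula is delicate for nonunimodular exponential groups, and it does not directly give pointwise fixing. Argue directly instead. If $\pi(g)$ is scalar, then $\pi(\exp t\Ad(g)Y)=\pi(\exp tY)$ for all $t$, so $d\pi(\Ad(g)Y-Y)=0$. Hence $\Ad(g)Y-Y\in\operatorname{Lie}(\ker\pi)\subseteq\Omega^{\perp}$, where the inclusion comes from compatibility of the Kirillov--Bernat bijection with quotients by connected normal subgroups. Therefore $\Ad^*(g)$ fixes $\Omega$ pointwise. Conversely, $\mathfrak k\subseteq\mathfrak g(\ell)\subseteq\mathfrak h$ for every polarization $\mathfrak h$. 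So in $\operatorname{Ind}_H^G\chi_\ell$, each $x\in\exp\mathfrak k$ acts by multiplication by $\chi_\ell(g^{-1}xg)=\chi_\ell(x)$; any modular factor is forced to be $1$ by unitarity.

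For (iii) you rely on a citation, as the paper does, but a pointer to ``Lipsman--Rosenberg style results'' does not identify a theorem with this conclusion. The precise source is \cite[Lemma~2.2.1]{BekkaLudwig1990}, as used in \cite{BeltitaVanVelthoven2024}. Do not treat your Mautner sketch as a fallback: nothing forces a WOT limit point of $\overline\pi(q_n)$ to commute with a one-parameter subgroup. In the Schr\"odinger case, $\pi(q_n)$ and $\pi(x)$ commute only up to the oscillating phases $\e^{2\pi i\sigma(q_n,x)}$, with $\sigma$ the symplectic form of \eqref{eq:Weyl-commutator}. There, decay comes from square-integrability modulo the center. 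Finally, polarization alone, as in Corollary~\ref{cor:cross-c0}, yields the cross coefficients; density is not needed.
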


\begin{proof}
We begin with the structure of the quotient.  Bekka and Ludwig prove that the
projective kernel of an irreducible unitary
representation of an exponential solvable Lie group is connected
\cite[Theorem~2.1]{BekkaLudwig1990}.  Since $K$ is also closed and normal,
$G/K$ is exponential by \cite[Corollary~1.8.5]{ArnalCurrey2020}.  Having
identified the quotient, we use the same representation-theoretic input to
obtain the precise decay statement needed
here: for each $w\in\cH$, the function
\[
 gK\longmapsto |\langle w,\pi(g)w\rangle|
\]
vanishes at infinity on $G/K$; see
\cite[Lemma~2.2.1, p.~520]{BekkaLudwig1990}.  This formulation is also used
explicitly in the proof of \cite[Proposition~2.2]{BeltitaVanVelthoven2024}.
Combining these cited statements gives (i)--(iii).
\end{proof}

Theorem~\ref{thm:projective-c0-input} is stated only for diagonal
coefficients, whereas the storage construction requires cross coefficients.
The following polarization argument passes from the former to the latter.

\begin{corollary}\label{cor:cross-c0}
Every irreducible unitary representation of an exponential solvable Lie group is
projectively $C_0$ in the sense of Definition~\ref{def:projective-c0}.
\end{corollary}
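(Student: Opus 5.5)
We pass from the diagonal decay in Theorem~\ref{thm:projective-c0-input}(iii) to cross coefficients by a polarization argument. Since only magnitudes are controlled, we avoid the scalar ambiguity on $K$ by an \emph{upper bound}: polarization expresses the cross coefficient as a combination of diagonal coefficients. Fix $u,v\in\cH$ and put $K=\pker(\pi)$. By Proposition~\ref{prop:quotient-dictionary}(ii), the function $F_{u,v}(gK)=|\langle u,\pi(g)v\rangle|$ is already well defined and continuous on $G/K$. It therefore remains only to show that $F_{u,v}$ vanishes at infinity.

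The naive polarization identity writes $\langle u,\pi(g)v\rangle$ as a combination of the terms $\langle w_k,\pi(g)w_k\rangle$ with $w_k=u+i^kv$. These terms, however, contain the cross terms $\langle u,\pi(g)v\rangle$ and $\langle v,\pi(g)u\rangle$ together with the diagonal ones $\langle u,\pi(g)u\rangle$ and $\langle v,\pi(g)v\rangle$. Precisely, for each $k$,
\[
 \langle w_k,\pi(g)w_k\rangle
 =\langle u,\pi(g)u\rangle+\overline{i^k}\,\langle u,\pi(g)v\rangle
  +i^k\langle v,\pi(g)u\rangle+\langle v,\pi(g)v\rangle .
\]
Multiplying by $i^k$ and summing over $k=0,1,2,3$ annihilates every term except the coefficient of $\langle u,\pi(g)v\rangle$, since $\sum_k i^k\overline{i^k}=4$ while $\sum_k i^k=\sum_k i^{2k}=0$. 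This gives
\[
 4\langle u,\pi(g)v\rangle=\sum_{k=0}^{3} i^k\langle w_k,\pi(g)w_k\rangle .
\]
Taking absolute values, we obtain the pointwise bound
\[
 |\langle u,\pi(g)v\rangle|\leq\tfrac14\sum_{k=0}^3|\langle w_k,\pi(g)w_k\rangle| ,
\]
valid for every $g\in G$ and hence for every coset $gK$.

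Each summand on the right belongs to $C_0(G/K)$ by Theorem~\ref{thm:projective-c0-input}(iii). A finite sum of $C_0$ functions is again $C_0$. Consequently, for every $\delta>0$ there is a compact set $C\subset G/K$ outside of which the right-hand side is less than $\delta$, and hence $F_{u,v}<\delta$ there as well. Together with continuity, this shows that $F_{u,v}\in C_0(G/K)$, which is exactly the assertion of Definition~\ref{def:projective-c0}.

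The only point requiring care is the step that might seem to need the phase: the magnitudes of the diagonal coefficients do not determine the cross coefficient. This causes no difficulty, because the polarization identity holds for the actual complex coefficients $\langle w_k,\pi(g)w_k\rangle$ at each fixed $g$, and we then pass to magnitudes only through the triangle inequality. Note also that the $C_0$ property is used only as a dominated upper bound, so no assumption on the phase function $\zeta$ on $K$ is needed.
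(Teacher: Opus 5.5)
Your proposal is correct and follows essentially the same route as the paper. Your identity $4\langle u,\pi(g)v\rangle=\sum_{k=0}^{3} i^k\langle u+i^kv,\pi(g)(u+i^kv)\rangle$ is exactly the polarization formula \eqref{eq:polarization}, and the rest matches the paper step for step: the triangle inequality, Theorem~\ref{thm:projective-c0-input}(iii) applied to each diagonal term, and continuity of the modulus on $G/K$.
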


\begin{proof}
Fix $g\in G$ and set
\[
 B_g(u,v)=\langle u,\pi(g)v\rangle,
 \qquad q_g(w)=B_g(w,w).
\]
With our convention that the inner product is linear in the first variable,
direct expansion gives
\begin{equation}\label{eq:polarization}
\begin{aligned}
 B_g(u,v)=\frac14\bigl(&q_g(u+v)-q_g(u-v)\\
 &+i q_g(u+iv)-i q_g(u-iv)\bigr).
\end{aligned}
\end{equation}
Consequently,
\[
 |B_g(u,v)|
 \leq\frac14\bigl(
 |q_g(u+v)|+|q_g(u-v)|+|q_g(u+iv)|+|q_g(u-iv)|
 \bigr).
\]
By Theorem~\ref{thm:projective-c0-input}, every term on the right vanishes at
infinity on $G/K$.  Moreover, the modulus on the left is
well defined on $G/K$ and is continuous by strong continuity of $\pi$.
Consequently, it belongs to $C_0(G/K)$, as required.
\end{proof}

\section{The one-sided storage construction}\label{sec:storage}

\subsection{Superdecreasing weights}

The preceding section supplied the perturbation principle and explained how
projective coefficient decay will later produce weak-operator mixing.  We now
develop the analytic construction that converts such directional mixing into
a single orbit frame.  Its first ingredient is a sequence of weights
which separates the influence of the past from that of the future.  More
precisely, the construction must control two different accumulations of error.  The
sum $\sum a_n$ controls all information stored before a given block, whereas
$\sum N_nT_n^2$ controls the aggregate tail after that block.  The following
choice makes both quantities arbitrarily small while retaining the exact
normalization $N_na_n^2=1$.

\begin{lemma}[Superdecreasing weights]\label{lem:weights}
For every $\rho>0$, there exist integers $N_n\geq2$ and numbers
$a_n=N_n^{-1/2}$ such that, with
\begin{equation}\label{eq:weight-quantities}
 \sigma=\sum_{n\geq1}a_n,
 \qquad T_n=\sum_{m>n}a_m,
 \qquad \tau^2=\sum_{n\geq1}N_nT_n^2,
\end{equation}
one has $\sigma<\rho$ and $\tau<\rho$.
\end{lemma}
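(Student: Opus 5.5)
We choose the integers $N_n$ recursively so that the $a_n=N_n^{-1/2}$ decrease very fast. The difficulty is that $T_n$ depends on all later choices, while the weight $N_n$ in front of $T_n^2$ is fixed at step $n$. We therefore control $T_n$ through a geometric tail bound that is built into the recursion, and not by looking at $T_n$ after the fact.

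\textbf{The recursion.} Fix $\delta\in(0,1)$, to be chosen small depending on $\rho$. Pick $N_1\ge 2$ with $a_1\le\delta$. Having chosen $N_1,\dots,N_n$, pick $N_{n+1}\ge 2$ so large that
\[
 a_{n+1}\le \tfrac12\,\delta^{\,n+1}\,a_n^{2}.
\]
This is possible because $a_{n+1}=N_{n+1}^{-1/2}$ can be made as small as we like. The construction gives $a_{m+1}\le \tfrac12 a_m$ for every $m$, since $\delta^{m+1}a_m\le1$.

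\textbf{Bound on $\sigma$.} By the halving property, $\sigma\le 2a_1\le2\delta$.

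\textbf{Bound on each tail.} By the same halving property,
\[
 T_n=\sum_{m>n}a_m\le 2a_{n+1}\le \delta^{\,n+1}a_n^{2}.
\]

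\textbf{Bound on $\tau^2$.} Since $N_n=a_n^{-2}$, the tail bound gives
\[
 N_nT_n^2\le a_n^{-2}\,\delta^{2n+2}a_n^{4}=\delta^{2n+2}a_n^{2}\le\delta^{2n+2}.
\]
Summing over $n$,
\[
 \tau^2\le\sum_{n\ge1}\delta^{2n+2}=\frac{\delta^4}{1-\delta^2}.
\]

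\textbf{Choice of $\delta$.} Take $\delta<\min\{\rho/2,\,1/2\}$ and also small enough that $\delta^4/(1-\delta^2)<\rho^2$. Then $\sigma\le 2\delta<\rho$ and $\tau<\rho$, and the normalization $N_na_n^2=1$ holds by definition.

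The only delicate point is the circularity between $T_n$ and the future choices. It is resolved by choosing each $a_{n+1}$ quadratically small in terms of $a_n$, so that the tail after block $n$ stays below the scale $a_n^2=N_n^{-1}$, with geometric slack left over to make the sum converge.
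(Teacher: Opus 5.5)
Your proof is correct and follows essentially the same route as the paper. Both arguments choose $N_{n+1}$ recursively so that $a_{n+1}/a_n$ is small, use a geometric tail bound ($T_n\le 2a_{n+1}$ in your version), deduce $N_nT_n^2\le 4(a_{n+1}/a_n)^2$, and sum; the only difference is that the paper fixes ratios in advance, $a_{n+1}\le c_na_n$ with $c_n\le 1/4$ and $4\sum c_n^2<\rho^2$, so the extra factor $a_n$ in your quadratic recursion $a_{n+1}\le\frac12\delta^{n+1}a_n^2$ is harmless but unnecessary.
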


\begin{proof}
We construct the multiplicities recursively.  Choose numbers
$0<c_n\leq1/4$ satisfying
$4\sum_{n\geq1}c_n^2<\rho^2$.  First choose $N_1$ sufficiently large.
After $N_n$ has been chosen, choose the integer $N_{n+1}\geq2$ so large that
\begin{equation}\label{eq:weight-recursion}
 a_{n+1}=N_{n+1}^{-1/2}\leq c_na_n.
\end{equation}
The recursive inequality also controls each tail.  Indeed, because
$c_n\leq1/4$, iteration gives
$a_{n+k}\leq a_{n+1}4^{-(k-1)}$ for $k\geq1$.  Therefore
\begin{equation}\label{eq:tail-bound}
 T_n\leq \frac43a_{n+1}<2a_{n+1}.
\end{equation}
Using $N_n=a_n^{-2}$ and \eqref{eq:weight-recursion}, we obtain
\[
 N_nT_n^2
 <4\frac{a_{n+1}^2}{a_n^2}
 \leq4c_n^2.
\]
Summing the preceding estimates gives $\tau<\rho$.  The same geometric
argument yields $\sigma\leq(4/3)a_1$.  Choosing $N_1$ still larger if
necessary makes $\sigma<\rho$ and completes the proof.
\end{proof}

\begin{example}[An explicit superdecreasing pattern]\label{ex:explicit-weights}
Fix an integer $M\geq4$ and set
\[
 N_n=M^{2^n},\qquad a_n=M^{-2^{n-1}}.
\]
Then $a_{n+1}=a_n^2\leq M^{-1}a_n$.  Hence
$\sigma\leq a_1/(1-M^{-1})$, and
$T_n\leq a_{n+1}/(1-M^{-1})$.  Since $N_n=a_n^{-2}$,
\[
 N_nT_n^2
 \leq (1-M^{-1})^{-2}a_n^2.
\]
Moreover,
\[
 \sum_{n\geq1}a_n^2
 \leq \frac{a_1^2}{1-M^{-2}},
 \qquad
 \tau^2\leq
 \frac{a_1^2}{(1-M^{-1})^2(1-M^{-2})}.
\]
Consequently, both $\sigma$ and $\tau$ tend to zero as $M\to\infty$.  Although
the recursive proof of Lemma~\ref{lem:weights} is more flexible, this concrete
family makes clear why the block sizes must grow much faster than
geometrically.
\end{example}

\subsection{The model Parseval family}

The weights just constructed also determine the Parseval family that the
orbit will approximate.  Let $(e_n)_{n\geq1}$ be an orthonormal basis and suppose
$a_n=N_n^{-1/2}$.  Repeating $a_ne_n$ exactly $N_n$ times produces a
Parseval frame:
\begin{equation}\label{eq:model-parseval-calculation}
 \sum_{n\geq1}\sum_{j=1}^{N_n}
 |\langle f,a_ne_n\rangle|^2
 =\sum_{n\geq1}N_na_n^2|\langle f,e_n\rangle|^2
 =\|f\|^2.
\end{equation}
Thus multiplicity exactly compensates for the small norm of each atom.  The
storage theorem will approximate this repeated family by orbit vectors of a
single vector $\phi$.

\begin{example}[A finite toy block]\label{ex:finite-parseval-block}
In $\mathbb C^2$, four copies of $e_1/2$ together with nine copies of
$e_2/3$ form a Parseval frame.  Indeed, the four copies contribute
$|\langle f,e_1\rangle|^2$ and the nine copies contribute
$|\langle f,e_2\rangle|^2$.  The infinite construction below uses the same
normalization in every coordinate, while the rapidly growing multiplicities
also create room for a summable storage vector.
\end{example}

\subsection{Abstract storage theorem}

We are now in a position to formulate the abstract theorem.  Its hypotheses
isolate precisely the weak-operator limits used by the storage argument.
Recall that $T_m\to0$ in the weak operator topology
(abbreviated WOT) when $\langle u,T_mv\rangle\to0$ for every $u,v\in\cH$.

\begin{definition}[Local mixing-expansion datum]
\label{def:mixing-expansion}
For a strongly continuous unitary representation $\pi:G\to\U(\cH)$, a
\emph{local mixing-expansion datum} consists of a nondiscrete Hausdorff
topological group $A$, a continuous homomorphism $p:A\to G$, an identity
neighborhood $U\subset A$, and a sequence $(h_m)$ in $G$ such that
\[
 \pi(h_m)\xrightarrow{\mathrm{WOT}}0
\]
and, for every $a\in U\setminus\{e_A\}$,
\[
 \pi\bigl(h_mp(a)h_m^{-1}\bigr)\xrightarrow{\mathrm{WOT}}0.
\]
\end{definition}

Before constructing the frame, we record a simple consequence of the first
weak-operator limit.  It ensures that the recursively chosen sampling points
can avoid every previously used projective coset.

\begin{lemma}[Weak-operator decay forces projective escape]
\label{lem:wot-projective-escape}
Let $G$ be a second-countable locally compact group, let
$\pi:G\to\U(\cH)$ be strongly continuous, and put $K=\pker(\pi)$.  If
$\pi(g_m)\to0$ in the weak operator topology, then
\[
  g_mK\longrightarrow\infty\qquad\text{in }G/K.
\]
\end{lemma}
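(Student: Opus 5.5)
Argue by contradiction, using a single nonzero vector and the continuity of coefficient magnitudes on $Q=G/K$ from Proposition~\ref{prop:quotient-dictionary}(ii). That proposition is stated for Lie groups, but its proof of (i)--(ii) uses only strong continuity, so it applies verbatim here. Fix a unit vector $w\in\cH$ and consider the continuous function
\[
 F(gK)=|\langle w,\pi(g)w\rangle|
\]
on $Q$. Suppose $g_mK$ does not tend to infinity. Then there is a compact set $C\subset Q$ and a subsequence, still written $(g_m)$, with $g_mK\in C$ for all $m$. Since $\pi(g_m)\to0$ in WOT, $F(g_mK)\to0$.

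The key step is to show that $F$ cannot approach $0$ along points of a compact set. A single vector will not suffice: $F$ may vanish at points of $Q$ that are not the identity. I would therefore use all vectors at once. Pass to a further subnet $g_{m_\alpha}K\to x=gK\in C$; if $Q$ is metrizable, which it is because $G$ is second countable, this can be a subsequence. Choose $k_\alpha\in G$ with $g_{m_\alpha}k_\alpha\to g$ in $G$. This is possible because the quotient map is open, so convergent sequences in $Q$ lift to convergent sequences in $G$ after modifying representatives; for sequences this uses a standard lemma on open maps between first-countable spaces. Write $\pi(k_\alpha)=\zeta_\alpha I$ with $|\zeta_\alpha|=1$. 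Strong continuity gives
\[
 \zeta_\alpha\pi(g_{m_\alpha})=\pi(g_{m_\alpha}k_\alpha)\to\pi(g)
\]
strongly, hence in WOT. For any $u,v$,
\[
 |\langle u,\pi(g)v\rangle|=\lim_\alpha|\langle u,\pi(g_{m_\alpha})v\rangle|=0 .
\]
So $\pi(g)=0$, which contradicts unitarity (take $u=\pi(g)v$ with $v\neq0$). Therefore $g_mK$ leaves every compact set.

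The main obstacle I anticipate is the lifting step: producing representatives $g_{m}k_m$ that converge in $G$ from the convergence of cosets in $Q$. My first attempt would be the direct argument. Fix a countable neighborhood base $(W_j)$ of $g$. Since $q(gW_j)$ is a neighborhood of $x$ by openness of $q$, eventually $g_mK\in q(W_jg)$, so one can choose $k_m\in K$ with $g_mk_m\in W_jg$ for $m$ beyond an increasing index $M_j$. Diagonalizing over $j$ gives $g_mk_m\to g$. An alternative that avoids the lift is to note that the moduli $|\langle u,\pi(\cdot)v\rangle|$ descend continuously to $Q$ and to pass to the limit directly: $|\langle u,\pi(g)v\rangle|=\lim|\langle u,\pi(g_m)v\rangle|=0$ for all $u,v$. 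This uses only Proposition~\ref{prop:quotient-dictionary}(ii) applied to each pair $(u,v)$ along the convergent subsequence in $Q$, and it is cleaner. I would present that version, keeping the lift only as a remark.
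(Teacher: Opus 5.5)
Your proposal is correct, and the version you say you would present is essentially the paper's proof. The paper likewise extracts a subsequence with $g_mK\to gK$ (using metrizability of the second-countable quotient) and uses continuity on $G/K$ of the descended coefficient modulus, applied directly to the single pair $(\pi(g)v,v)$, whose value at the limit is $\|v\|^2$ --- this is exactly your specialization $u=\pi(g)v$, while the lifting argument is an unnecessary detour (and there $W_j$ should be a neighborhood base at the identity, so that $W_jg$ is a neighborhood of $g$).
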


\begin{proof}
Suppose, to the contrary, that the asserted escape fails.  Then some compact
subset of $G/K$ contains a subsequence of $(g_mK)$.
The quotient is second countable and locally compact, hence metrizable; after
passing to a further subsequence, suppose $g_mK\to gK$.  For $0\neq v\in\cH$,
the scalar action of $K$ and strong continuity give the continuous function
\[
 xK\longmapsto |\langle\pi(g)v,\pi(x)v\rangle|.
\]
Its values at $g_mK$ tend to $\|v\|^2$, whereas weak convergence of
$\pi(g_m)$ to zero forces the same values to tend to zero.  This
contradiction proves the lemma.
\end{proof}

\begin{theorem}[Sequential weak-operator storage]
\label{thm:abstract-storage}
Let $G$ be a second-countable locally compact group, let
$\pi:G\to\U(\cH)$ be a strongly continuous unitary representation on an
infinite-dimensional separable Hilbert space, and put $K=\pker(\pi)$.
Assume that $\pi$ admits a local mixing-expansion datum
$(A,p,U,(h_m))$ in the sense of Definition~\ref{def:mixing-expansion}.
Then, for every $0<\varepsilon<1$, there exist a nonzero vector
$\phi\in\cH$ and a countable set $\Gamma\subset G$ such that
\begin{equation}\label{eq:storage-frame}
 (1-\varepsilon)^2\|f\|^2
 \leq \sum_{\gamma\in\Gamma}
       |\langle f,\pi(\gamma)\phi\rangle|^2
 \leq (1+\varepsilon)^2\|f\|^2
 \qquad(f\in\cH).
\end{equation}
Moreover, $\Gamma$ may be chosen so that the quotient map
$q:G\to G/K$ is injective on $\Gamma$ and $q(\Gamma)$ is left relatively
separated (hence closed and discrete).
\end{theorem}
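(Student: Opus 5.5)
The plan is to realize, up to a small synthesis error, the model Parseval family \eqref{eq:model-parseval-calculation} as a single orbit, and then invoke Lemma~\ref{lem:synthesis-perturbation} with $E=\varepsilon$. Lemma~\ref{lem:bessel-relative-separation} then supplies relative separation, and Lemma~\ref{lem:wot-projective-escape} supplies quotient injectivity. First fix $\rho>0$, small in terms of $\varepsilon$, and take weights $N_n,a_n$ from Lemma~\ref{lem:weights} with $\sigma,\tau<\rho$. The storage vector will be
\[
 \phi=\sum_{n\geq1}w_n,\qquad \|w_n\|=a_n ,
\]
which converges absolutely because $\sigma<\infty$. The sampling set will be $\Gamma=\{\gamma_{n,j}:n\in\N,\ 1\le j\le N_n\}$, where block $n$ is intended to retrieve the stored piece $w_n$. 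The orthonormal vectors $(e_n)$ and the target columns will be chosen recursively together with $w_n$ and $\gamma_{n,j}$. At the end we must check that the resulting $(e_n)$ is an orthonormal basis; to arrange this, I will interleave a fixed dense sequence into the recursion by a Gram--Schmidt bookkeeping step, using that $\cH$ is infinite-dimensional and separable.

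Within block $n$, write $\gamma_{n,j}=y_{n,j}x_n$. The piece $w_n=a_n\pi(x_n)^{-1}e_n$ is chosen so that $\pi(x_n)w_n=a_ne_n$. The $y_{n,j}$ are of the form $h_kp(b_{n,j})h_k^{-1}$, where $b_{n,1},\dots,b_{n,N_n}\in A$ are distinct and all quotients $b_{n,i}^{-1}b_{n,j}$ lie in $U\setminus\{e_A\}$; nondiscreteness of $A$ guarantees that such $b$'s exist. By the conjugated WOT hypothesis, the unitaries $\pi(y_{n,j})$ then make any fixed finite set of vectors pairwise almost orthogonal as $k\to\infty$. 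Hence the $N_n$ images of each earlier piece $w_m$ ($m<n$) have a Gram matrix close to $a_m^2 I$. Their contribution to $D-D_0$ is therefore of norm about $a_m$ rather than $a_m\sqrt{N_n}$, and summing over $m<n$ gives at most about $\sigma<\rho$. The pure WOT hypothesis $\pi(h_m)\to0$ is used for two purposes. First, choosing $x_n$ far out makes $\pi(\gamma_{n,j})w_m$ nearly orthogonal to the earlier target directions, and makes the new target direction nearly orthogonal to all earlier data. Second, by Lemma~\ref{lem:wot-projective-escape}, it makes $q(\gamma_{n,j})$ avoid all previously used cosets, which gives quotient injectivity. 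The future pieces $w_m$ ($m>n$) have total norm $T_n$. Estimating their contribution crudely in Hilbert--Schmidt norm gives at most $\sqrt{N_n}\,T_n$ per block, hence at most $\tau<\rho$ overall; this is exactly the role of the quantity $\tau$.

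The step I expect to be the main obstacle is \emph{fidelity of the own block}. Each column must satisfy $\pi(\gamma_{n,j})w_n\approx a_ne_n$, yet the $y_{n,j}$ are conjugates that scramble vectors. There is a circularity here: the $b_{n,j}$ must be fixed before $k$ is sent to infinity to obtain near-orthogonality, while $e_n$ would have to be almost invariant under the resulting $\pi(y_{n,j})$. My first attempt will avoid this by changing the target rather than forcing invariance. The block-$n$ target columns will be $a_n\pi(y_{n,j})e_n$, which are nearly orthonormal up to the scale $a_n$ for large $k$. I will then replace block $n$ of the model family by an exactly Parseval family on the span $\cH_n$ of these columns, with error controlled by $\|G-I\|$ for the Gram matrix $G$. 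Finally, I will make the spaces $\cH_n$ mutually orthogonal and exhaust $\cH$ by a small unitary correction chosen at each recursive step, again using WOT decay to make the new block nearly orthogonal to the old ones. The total error will be bounded by $\sigma+\tau+\sum_n\delta_n$, with $\delta_n$ chosen summable and small, so that the sum is at most $\varepsilon$. Lemma~\ref{lem:synthesis-perturbation} then gives \eqref{eq:storage-frame}. The upper bound shows that the orbit family is Bessel, so Lemma~\ref{lem:bessel-relative-separation} yields left relative separation of $q(\Gamma)$.
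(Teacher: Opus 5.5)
\textbf{Own-block fidelity.} Your resolution of this step fails, and it is the central step. You take conjugated retrieval elements $y_{n,j}=h_kp(b_{n,j})h_k^{-1}$ with $k$ large. Then the main columns $a_n\pi(y_{n,j})e_n$ of block $n$ are nearly orthogonal vectors of norm $a_n=N_n^{-1/2}$. Their Gram matrix is close to $a_n^2I$, not to $I$, so their frame operator on the $N_n$-dimensional span $\cH_n$ is roughly $N_n^{-1}I_{\cH_n}$. An exactly Parseval family of $N_n$ vectors in an $N_n$-dimensional space is an orthonormal basis. Every actual orbit vector, however, has norm $\|\phi\|\leq\sigma<\rho$. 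Hence every column of $D-D_0$ in such a block has norm greater than $1-\rho$, and Lemma~\ref{lem:synthesis-perturbation} yields nothing.

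The underlying reason is that the normalization $N_na_n^2=1$ in \eqref{eq:model-parseval-calculation} gives unit energy only because all $N_n$ copies lie along the \emph{same} direction $e_n$. Scattering them into $N_n$ orthogonal directions leaves energy $a_n^2$ per direction. If the $\cH_n$ are (nearly) mutually orthogonal, as your plan requires, then for a unit vector $f\in\cH_n$ your own interference bounds give $\sum_{\gamma}|\langle f,\pi(\gamma)\phi\rangle|^2\lesssim(a_n+\sigma+\tau)^2$, so the lower frame bound collapses. All orbit vectors share the small norm $\|\phi\|$ (Remark~\ref{rem:equal-norm-objection}). Therefore the columns of each block must stay close to one common direction, and no orthogonalization of the model can compensate.

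\textbf{Removing the circularity.} The circularity you noticed disappears once you stop conjugating the retrieval elements and let the storage factor supply the conjugation instead.
Choose a neighborhood $V$ of $e_A$ with $V^{-1}V\subset U$. Pick $N_n$ distinct points $a_{n,j}$ in a neighborhood $J_n\subset V$ of $e_A$ on which $\|\pi(p(a))e_n-e_n\|<\delta_n$. Put $y_{n,j}=p(a_{n,j})$ and $x_n=h_{M_n}^{-1}$, so the stored piece is $a_n\pi(h_{M_n})e_n$ and $\gamma_{n,j}=p(a_{n,j})h_{M_n}^{-1}$.

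Own-block fidelity is then mere continuity, with Hilbert--Schmidt error $(\sum_nN_na_n^2\delta_n^2)^{1/2}=(\sum_n\delta_n^2)^{1/2}$. Let $w=\sum_{m<n}a_m\pi(h_{M_m})e_m$, which is already fixed. The past columns $\pi(p(a_{n,j})h_r^{-1})w$ have Gram entries $\langle w,\pi(h_rp(a_{n,j}^{-1}a_{n,k})h_r^{-1})w\rangle$. These tend to $0$ as $r\to\infty$ by the conjugated hypothesis, since $a_{n,j}^{-1}a_{n,k}\in U\setminus\{e_A\}$. The $a_{n,j}$ depend only on $e_n$ and $\delta_n$ and are fixed before $r=M_n$ is chosen, so nothing is circular.

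\textbf{Cross-block control.} Your per-block estimate of about $a_m$ does not by itself control past interference over infinitely many blocks. The cross-block Gram entries tend to $0$ by the plain hypothesis $\pi(h_r)\to0$, because $h_r$ is the leftmost factor of the relevant group element. They must be forced below a summable budget, such as $\eta^2 2^{-\iota(\alpha)-\iota(\beta)}$ for an enumeration $\iota$ of all columns. Schur's test then gives the Bessel bound $(\sigma^2+\eta^2)^{1/2}$ for all past interference at once.

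With this design the orthonormal basis $(e_n)$ is fixed at the outset and the target is exactly the repeated family $a_ne_n$. Neither Gram--Schmidt bookkeeping nor unitary corrections are needed.
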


Two features of the theorem should be emphasized.  First, its hypotheses are
directional and require neither irreducibility nor a global $C_0$ property.
Second, the construction produces the frame and the projectively separated
sampling set simultaneously.  Before proving the theorem, it is useful to
identify the target and the
three errors that will appear in every retrieved vector.  The $n$th basis
vector is stored as $a_n\pi(b_n)e_n$.  Applying
$\pi(p_{n,j}b_n^{-1})$ produces the target $a_ne_n$ plus a small local
retrieval error.  The terms stored before stage $n$ form the \emph{past
interference}; they are controlled collectively by a Gram-matrix estimate.
The terms stored after stage $n$ form the \emph{future interference}; they
are controlled by the superdecreasing tail.  These are different estimates,
which is why the numerical lemma tracks both $\sigma$ and $\tau$.

\begin{figure}[t]
\centering
\resizebox{\linewidth}{!}{%
\begin{tikzpicture}[
  font=\small,
  >={Latex[length=2.3mm]},
  store/.style={draw=OrbitInk, very thick, rounded corners=2pt,
    fill=OrbitMist, align=center, text width=7.0cm, minimum height=1.15cm},
  atom/.style={draw, thick, rounded corners=2pt, align=center,
    text width=3.05cm, minimum height=1.25cm},
  flow/.style={->,very thick,draw=OrbitInk}
]
\node[store] (store) at (0,2.0)
  {$\displaystyle \phi=\sum_{m\geq1}a_m\pi(b_m)e_m$\par
   one vector stores every basis direction};
\node[store,fill=OrbitBlue!7,draw=OrbitBlue] (retrieve) at (0,0.35)
  {$\displaystyle \pi(\gamma_{n,j})\phi,\qquad
    \gamma_{n,j}=p_{n,j}b_n^{-1}$};
\node[atom,draw=OrbitTeal,fill=OrbitTeal!8] (target) at (-5.1,-1.75)
  {target\par $a_ne_n$\par exact Parseval energy};
\node[atom,draw=OrbitGold,fill=OrbitGold!10] (local) at (-1.7,-1.75)
  {local error\par $q_{n,j}$\par $\|Q_0\|<d$};
\node[atom,draw=OrbitRed,fill=OrbitRed!7] (past) at (1.7,-1.75)
  {past interference\par $r_{n,j}$\par
   $\|R_-\|\leq\sqrt{\sigma^2+\eta^2}$};
\node[atom,draw=OrbitBlue,fill=OrbitBlue!7] (future) at (5.1,-1.75)
  {future tail\par $z_{n,j}$\par $\|R_+\|\leq\tau$};
\draw[flow] (store) -- node[right,font=\scriptsize]
  {retrieve one block} (retrieve);
\foreach \x in {target,local,past,future}
  \draw[flow] (retrieve.south) -- (\x.north);
\node[draw=OrbitInk,very thick,rounded corners=2pt,fill=OrbitMist,
  align=center,text width=8.2cm,minimum height=.85cm] (sum) at (0,-3.55)
  {$\displaystyle \|D-D_0\|\leq
    d+\sqrt{\sigma^2+\eta^2}+\tau<\varepsilon$};
\draw[flow] (local.south) -- (sum.north west);
\draw[flow] (past.south) -- (sum.north);
\draw[flow] (future.south) -- (sum.north east);
\end{tikzpicture}
}
\caption{Storage and retrieval at block $(n,j)$.  The target columns form a
Parseval frame; three analytically different error operators are controlled
in norm before the perturbation lemma is applied.}
\label{fig:storage-retrieval}
\end{figure}

\begin{proof}
We divide the proof into eight steps.  Steps~1--2 choose the Parseval target
and the local retrieval points.  Step~3 places the storage blocks recursively,
and Step~4 turns the resulting Gram budgets into a Bessel bound for past
interference.  Step~5 constructs the storage vector and the exact retrieval
decomposition, while Step~6 controls the remaining local and future errors.
Finally, Steps~7--8 compare the synthesis operators and verify the geometry
of the sampling set.

Fix $0<\varepsilon<1$ and put
\begin{equation}\label{eq:rho-choice}
   \rho=\frac{\varepsilon}{5}.
\end{equation}
Choose an orthonormal basis $(e_n)_{n\geq1}$ of $\cH$, and use
Lemma~\ref{lem:weights} to obtain $N_n$, $a_n$, $\sigma$, $T_n$, and $\tau$.

\Step{Step 1: choose the target Parseval frame}
We begin by fixing the family that will be approximated.  Let
\[
 \cI=\{(n,j):n\geq1,\ 1\leq j\leq N_n\}
\]
and define
\begin{equation}\label{eq:target-parseval}
 u_{n,j}=a_ne_n\qquad((n,j)\in\cI).
\end{equation}
By \eqref{eq:model-parseval-calculation}, $(u_{n,j})_{(n,j)\in\cI}$ is a
Parseval frame.  Write $D_0:\ell^2(\cI)\to\cH$ for its synthesis operator.

\Step{Step 2: choose small retrieval perturbations}
Having fixed the target frame, we next choose the local points that will
retrieve each target column.  Choose positive numbers $\delta_n$ for which
\begin{equation}\label{eq:d-definition}
 d^2:=\sum_{n\geq1}\delta_n^2<\rho^2.
\end{equation}
Choose a symmetric identity neighborhood $V\subset A$ with
$V^{-1}V\subset U$.  Since $p(e_A)=e_G$ and $a\mapsto\pi(p(a))e_n$ is
norm-continuous at $e_A$, there is an identity neighborhood $J_n\subset V$
on which $\|\pi(p(a))e_n-e_n\|<\delta_n$.  A nondiscrete Hausdorff
topological group has no isolated points, so $J_n$ contains $N_n$ distinct
points
\begin{equation}\label{eq:retrieval-points}
 a_{n,j}\in J_n,\qquad 1\leq j\leq N_n.
\end{equation}
Writing $p_{n,j}=p(a_{n,j})$, the required local approximation becomes
\begin{equation}\label{eq:retrieval-small}
 \|\pi(p_{n,j})e_n-e_n\|<\delta_n.
\end{equation}

\Step{Step 3: place the storage blocks recursively}
We now place the storage blocks so that both their mutual interactions and
their projective cosets remain under control.  Fix a bijection
$\iota:\cI\to\N$, choose $0<\eta<\rho$, and set $M_0=0$.
Suppose that $M_1,\ldots,M_{n-1}$ have been chosen.  Put
\begin{equation}\label{eq:past-storage-vector}
 b_m=h_{M_m},
 \qquad
 w_n=\sum_{m<n}a_m\pi(b_m)e_m,
\end{equation}
and, for a tentative integer $r\geq1$, define
\begin{equation}\label{eq:past-column-tentative}
 r_{n,j}(r)=\pi\bigl(p_{n,j}h_r^{-1}\bigr)w_n.
\end{equation}
The purpose of the recursion is to choose $M_n>M_{n-1}$ so that
\begin{align}
 |\langle r_{n,j}(M_n),r_{\ell,k}(M_\ell)\rangle|
 &\leq \eta^2 2^{-\iota(n,j)-\iota(\ell,k)},
 &&\ell<n,                                      \label{eq:cross-budget}\\
 |\langle r_{n,j}(M_n),r_{n,k}(M_n)\rangle|
 &\leq \eta^2 2^{-\iota(n,j)-\iota(n,k)},
 &&j\neq k.                                     \label{eq:internal-budget}
\end{align}
At stage $n$ only finitely many inequalities are involved.  To see that they
can all be satisfied, we use unitarity together with the convention that the
inner product is linear in the first variable and obtain
\begin{align}
 \langle r_{n,j}(r),r_{n,k}(r)\rangle
 &=\left\langle w_n,
 \pi\bigl(h_rp_{n,j}^{-1}p_{n,k}h_r^{-1}\bigr)w_n
 \right\rangle,                                  \label{eq:internal-gram}\\
 \langle r_{n,j}(r),r_{\ell,k}(M_\ell)\rangle
 &=\left\langle w_n,
 \pi\bigl(h_rp_{n,j}^{-1}p_{\ell,k}b_\ell^{-1}\bigr)w_\ell
 \right\rangle.                                  \label{eq:cross-gram}
\end{align}
If $j\neq k$, then
$a_{n,j}^{-1}a_{n,k}\in V^{-1}V\setminus\{e_A\}\subset U\setminus\{e_A\}$.
Because $p$ is a homomorphism, the conjugated WOT limit in
Definition~\ref{def:mixing-expansion} makes the coefficient in
\eqref{eq:internal-gram} tend to zero.  In \eqref{eq:cross-gram}, everything
after $h_r$ is fixed, so the first WOT limit in that definition, with the
fixed right factor absorbed into the second vector, makes that coefficient
tend to zero.  Intersecting the finitely many eventual tails of $r$ therefore
gives both coefficient estimates.

At the same stage we impose one additional finite requirement, namely the
distinctness of the projective cosets.  For a tentative $r$, put
\begin{equation}\label{eq:sampling-elements-tentative}
 \gamma_{n,j}(r)=p_{n,j}h_r^{-1}.
\end{equation}
We require the cosets $\gamma_{n,j}(M_n)K$ to be distinct from every
previously chosen coset and from one another.  Lemma
\ref{lem:wot-projective-escape} gives $h_rK\to\infty$.  For fixed $j$,
$p_{n,j}h_r^{-1}K\to\infty$, because inversion and multiplication by a fixed
element are proper homeomorphisms of $G/K$; hence finitely many earlier points
may be avoided.  It remains only to distinguish points within the new block.
If the $j$th and $k$th cosets in that block were equal, then
\[
 h_rp_{n,j}^{-1}p_{n,k}h_r^{-1}\in K.
\]
The conjugated WOT limit and Lemma~\ref{lem:wot-projective-escape} show that the coset
of the element on the left escapes, so this is impossible for all sufficiently
large $r$.  Consequently, the coefficient budgets and all distinctness
requirements can be met simultaneously.

Choose such an $M_n$ and set
\begin{equation}\label{eq:sampling-elements}
 b_n=h_{M_n},\qquad
 \gamma_{n,j}=\gamma_{n,j}(M_n)=p_{n,j}b_n^{-1},\qquad
 r_{n,j}=r_{n,j}(M_n).
\end{equation}

\Step{Step 4: bound all past interference by Schur's test}
The recursion controls every off-diagonal Gram entry.  We now combine these
estimates to control the entire past-interference operator.  The diagonal
Gram entries satisfy
\begin{equation}\label{eq:past-column-norm}
 \|r_{n,j}\|^2=\|w_n\|^2
 \leq\left(\sum_{m<n}a_m\right)^2
 \leq\sigma^2.
\end{equation}
Every unordered off-diagonal pair appears in either
\eqref{eq:cross-budget} or \eqref{eq:internal-budget} when its later block is
chosen.  Hermitian symmetry controls the reverse order.  Hence, for each
$\alpha\in\cI$,
\begin{align}\label{eq:past-row-sum}
 \sum_{\substack{\beta\in\cI\\\beta\neq\alpha}}
 |\langle r_\alpha,r_\beta\rangle|
 &\leq \eta^2 2^{-\iota(\alpha)}
       \sum_{\beta\in\cI}2^{-\iota(\beta)}
 \leq\eta^2.
\end{align}
Applying Schur's test to every finite Gram section gives a uniform bound.
Consequently, for every finitely supported $c$,
\[
 \left\|\sum_{\alpha\in\cI}c_\alpha r_\alpha\right\|^2
 \leq(\sigma^2+\eta^2)\|c\|_2^2.
\]
It follows that the family $(r_{n,j})$ is Bessel and that its synthesis
operator $R_-$ satisfies
\begin{equation}\label{eq:past-synthesis-bound}
 \|R_-\|\leq(\sigma^2+\eta^2)^{1/2}.
\end{equation}

\Step{Step 5: store all basis vectors in one vector and retrieve them}
Having controlled all previously stored terms, we can now assemble the
storage vector.  Define
\begin{equation}\label{eq:storage-vector}
 \phi=\sum_{m\geq1}a_m\pi(b_m)e_m.
\end{equation}
This series converges absolutely because $\sum_ma_m=\sigma<\infty$.  Applying
$\pi(\gamma_{n,j})$ and separating the terms with $m<n$, $m=n$, and $m>n$
gives the exact identity
\begin{equation}\label{eq:retrieval-decomposition}
 \pi(\gamma_{n,j})\phi
 =a_n\pi(p_{n,j})e_n+r_{n,j}+z_{n,j},
\end{equation}
where
\begin{equation}\label{eq:future-column}
 z_{n,j}
 =\sum_{m>n}a_m
   \pi\bigl(p_{n,j}b_n^{-1}b_m\bigr)e_m.
\end{equation}
The series defining $\phi$ and $z_{n,j}$ are absolutely convergent.  In
particular, no rearrangement of a conditionally convergent series occurs in
the preceding calculation.

\Step{Step 6: estimate the main and future errors}
It remains to estimate the two parts of the retrieval decomposition not
covered by $R_-$.  The main-term error columns are
\[
 q_{n,j}=a_n\bigl(\pi(p_{n,j})e_n-e_n\bigr).
\]
They define a Hilbert--Schmidt synthesis operator $Q_0$ satisfying
\begin{equation}\label{eq:main-error-bound}
\begin{aligned}
 \|Q_0\|^2
 &\leq\|Q_0\|_{\HS}^2
   =\sum_{n\geq1}\sum_{j=1}^{N_n}\|q_{n,j}\|^2 \\
 &<\sum_{n\geq1}N_na_n^2\delta_n^2
   =\sum_{n\geq1}\delta_n^2=d^2.
\end{aligned}
\end{equation}
For the future terms, unitarity and the triangle inequality give
$\|z_{n,j}\|\leq T_n$.  Hence
the future columns define a Hilbert--Schmidt synthesis operator $R_+$ with
\begin{equation}\label{eq:future-error-bound}
 \|R_+\|^2
 \leq\|R_+\|_{\HS}^2
 \leq\sum_{n\geq1}N_nT_n^2=\tau^2.
\end{equation}

\Step{Step 7: compare the actual and model synthesis operators}
We now combine the three error operators and compare the resulting orbit
family with the model Parseval frame.  Let
$D_{\mathrm{fin}}:c_{00}(\cI)\to\cH$ be the algebraic synthesis map for the
orbit family $(\pi(\gamma_{n,j})\phi)$.  Equations
\eqref{eq:retrieval-decomposition}--\eqref{eq:future-error-bound} give
\begin{equation}\label{eq:synthesis-decomposition}
 D_{\mathrm{fin}}=D_0+Q_0+R_-+R_+
 \qquad\text{on }c_{00}(\cI).
\end{equation}
Every operator on the right is bounded on $\ell^2(\cI)$.  Therefore
$D_{\mathrm{fin}}$ extends uniquely to the synthesis operator
$D:\ell^2(\cI)\to\cH$ of the orbit family, and
\begin{equation}\label{eq:total-error}
 \|D-D_0\|
 \leq d+(\sigma^2+\eta^2)^{1/2}+\tau=:E.
\end{equation}
Since $d,\sigma,\eta,\tau<\rho$,
\begin{equation}\label{eq:error-budget-final}
 E<(2+\sqrt2)\rho
   =\frac{2+\sqrt2}{5}\,\varepsilon
   <\varepsilon<1.
\end{equation}
The perturbation estimate is now small enough to apply
Lemma~\ref{lem:synthesis-perturbation}.  It first gives the stronger bounds
$(1-E)^2$ and $(1+E)^2$, and hence the bounds in \eqref{eq:storage-frame}.
In particular, $\phi\neq0$; otherwise every orbit vector would vanish.

\Step{Step 8: prove relative separation in the quotient}
It remains to verify the promised geometry of the sampling set.  Let
\[
 \Gamma=\{\gamma_{n,j}:(n,j)\in\cI\}.
\]
The recursive distinctness requirement shows that the quotient map
$q:G\to G/K$ is injective on $\Gamma$.  The orbit family is Bessel, with
upper frame bound at most $(1+E)^2$.  Since $\phi\neq0$,
Lemma~\ref{lem:bessel-relative-separation} implies that $q(\Gamma)$ is left
relatively separated.  In particular, $q(\Gamma)$ is locally finite and
therefore closed and discrete.  This completes the proof of all assertions.
\end{proof}

The preceding theorem is formulated only in terms of weak-operator limits.
In applications, these limits are most conveniently verified by combining
projective coefficient decay with geometric escape, as the next corollary
records.

\begin{corollary}[Projective $C_0$ escape criterion]
\label{cor:projective-storage}
Let $G$ be a second-countable locally compact group, let
$\pi:G\to\U(\cH)$ be a strongly continuous unitary representation on an
infinite-dimensional separable Hilbert space, and put $K=\pker(\pi)$.
Let $A$ be a nondiscrete Hausdorff topological group, let
$p:A\to G$ be a continuous homomorphism, let $U\subset A$ be an identity
neighborhood, and let $(h_m)$ be a sequence in $G$.  Suppose that $\pi$ is
projectively $C_0$ and that
\begin{align}
 h_mK&\longrightarrow\infty,                                      \label{eq:geometric-storage-escape}\\
 h_mp(a)h_m^{-1}K&\longrightarrow\infty
 \qquad(a\in U\setminus\{e_A\}).                                \label{eq:geometric-conjugation-escape}
\end{align}
Then all conclusions of Theorem~\ref{thm:abstract-storage} hold.
\end{corollary}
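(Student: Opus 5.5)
The plan is to check that the hypotheses of the corollary produce a local mixing-expansion datum $(A,p,U,(h_m))$ in the sense of Definition~\ref{def:mixing-expansion}. Once that is done, Theorem~\ref{thm:abstract-storage} gives every conclusion directly. The sets $A$, $p$, $U$ and the sequence $(h_m)$ are already supplied, so only the two weak-operator limits need proof. Both will come from one general claim.

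\emph{Claim:} if $\pi$ is projectively $C_0$ and $(g_m)$ is a sequence in $G$ with $g_mK\to\infty$ in $G/K$, then $\pi(g_m)\to0$ in the weak operator topology.

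To prove the claim, fix $u,v\in\cH$ and $\epsilon>0$. By Definition~\ref{def:projective-c0}, the function $gK\mapsto|\langle u,\pi(g)v\rangle|$ lies in $C_0(G/K)$. Hence there is a compact set $C\subset G/K$ outside which this function is smaller than $\epsilon$. Since $g_mK\to\infty$, the points $g_mK$ eventually leave $C$. Therefore $|\langle u,\pi(g_m)v\rangle|<\epsilon$ for all large $m$. As $\epsilon$ was arbitrary, $\langle u,\pi(g_m)v\rangle\to0$, which is exactly weak-operator convergence to $0$.

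The two limits now follow. Applying the claim to $g_m=h_m$ and using \eqref{eq:geometric-storage-escape} gives $\pi(h_m)\to0$ in WOT. For each fixed $a\in U\setminus\{e_A\}$, applying the claim to $g_m=h_mp(a)h_m^{-1}$ and using \eqref{eq:geometric-conjugation-escape} gives $\pi(h_mp(a)h_m^{-1})\to0$ in WOT. Thus $(A,p,U,(h_m))$ is a local mixing-expansion datum, and Theorem~\ref{thm:abstract-storage} applies verbatim. This yields the near-Parseval bounds \eqref{eq:storage-frame}, injectivity of $q$ on $\Gamma$, and relative separation of $q(\Gamma)$.

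There is no real obstacle here. The only point that needs care is that the $C_0$ condition concerns the modulus of the coefficient as a function on the quotient $G/K$. That modulus is well defined on $G/K$ by Proposition~\ref{prop:quotient-dictionary}(ii), whose proof uses only that $K$ acts by scalars, not the Lie structure. So escape to infinity in $G/K$, rather than in $G$, is precisely the notion that matches the decay hypothesis.
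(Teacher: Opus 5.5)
Your proposal is correct and follows the paper's proof: projective $C_0$ decay converts each of the two escape hypotheses into the corresponding WOT limit, and Theorem~\ref{thm:abstract-storage} then applies. Your $\epsilon$-and-compact-set argument simply writes out the one step that the paper's proof leaves implicit.
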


\begin{proof}
Let $u,v\in\cH$.  Projective $C_0$ decay applied to
\eqref{eq:geometric-storage-escape} gives
$\langle u,\pi(h_m)v\rangle\to0$.  Applied to
\eqref{eq:geometric-conjugation-escape}, it gives
\[
 \langle u,\pi(h_mp(a)h_m^{-1})v\rangle\longrightarrow0
 \qquad(a\neq e_A).
\]
Thus both WOT hypotheses of Theorem~\ref{thm:abstract-storage} are satisfied,
and the conclusion follows from that theorem.
\end{proof}

\begin{remark}[Equal orbit norms and vanishing model norms]
\label{rem:equal-norm-objection}
All actual orbit vectors $\pi(\gamma_{n,j})\phi$ have the same norm
$\|\phi\|$, whereas the model vectors $a_ne_n$ have norms $a_n\to0$.
There is no contradiction: the construction also makes
\[
 \|\phi\|\leq\sum_{m\geq1}a_m=\sigma<\rho,
\]
so the common norm of every orbit vector lies inside the global perturbation
budget.  The normalization $N_na_n^2=1$ places the required energy in the
multiplicity $N_n$, not in the norm of any single model atom.  The comparison
is therefore an operator-norm comparison of synthesis maps, rather than a
uniform relative comparison of corresponding column norms.
\end{remark}

\begin{remark}[The three error mechanisms]\label{rem:three-errors}
The decomposition \eqref{eq:synthesis-decomposition} separates three
analytically different errors.  The operator $Q_0$ is the local retrieval
error and is small by continuity near the identity.  The operator $R_-$ is
past interference and is controlled by directional weak-operator decay plus
a Schur estimate.  The operator $R_+$ is future interference and is controlled
only by the superdecreasing weights.  Treating these mechanisms separately
is what prevents the infinitely many retrieval conditions from competing for
one uniform pointwise estimate.
\end{remark}

\section{Conjugation escape in exponential solvable Lie groups}
\label{sec:geometry}

We now provide the geometric input required by
Corollary~\ref{cor:projective-storage}.  More precisely, we construct two
one-parameter subgroups whose conjugation separates every pair of distinct
retrieval parameters.  This is the only point at which noncommutativity and
the spectral characterization of exponentiality interact.  Recall that, for
an exponential solvable Lie algebra, $\ad Z$ has no nonzero purely imaginary
eigenvalue for any $Z$; see
\cite[Chapter~1, Theorem~1]{LeptinLudwig1994}.

\begin{lemma}[Adjoint escape]\label{lem:lie-escape}
Let $Q$ be a nonabelian exponential solvable Lie group with Lie algebra
$\mathfrak q$.  There exist nonzero $X_0,Y_0\in\mathfrak q$ such that
\begin{equation}\label{eq:ad-escape}
 \|\exp(R\ad X_0)Y_0\|\longrightarrow\infty
 \qquad(R\to+\infty).
\end{equation}
The assertion is independent of the chosen norm on the finite-dimensional
space $\mathfrak q$.
\end{lemma}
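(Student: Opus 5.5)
The plan is to reduce the statement to a dichotomy for a single linear operator $\ad X_0$ on the finite-dimensional space $\mathfrak q$. For any linear map $T$ on a finite-dimensional complex or real space, $\|\exp(RT)Y\|$ stays bounded as $R\to+\infty$ for every $Y$ exactly when every eigenvalue of $T$ has nonpositive real part and the eigenvalues with zero real part are semisimple. The Jordan decomposition makes this precise: generalized eigenvectors for eigenvalues with positive real part grow exponentially, and a nontrivial Jordan block at a purely imaginary eigenvalue grows polynomially. Independence of the norm is immediate from the equivalence of norms on $\mathfrak q$. So it suffices to find $X_0$ such that $\ad X_0$ has either an eigenvalue with positive real part or a nontrivial Jordan block at an eigenvalue with zero real part. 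In either case we then pick $Y_0$ in the corresponding (generalized) eigenspace. If the eigenvalue is complex, we pass to the real and imaginary parts of the complex eigenvector; one of them must be unbounded under $\exp(R\ad X_0)$.

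By the exponentiality criterion quoted from \cite[Chapter~1, Theorem~1]{LeptinLudwig1994}, the only purely imaginary eigenvalue any $\ad Z$ can have is $0$. I would split into two cases.

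In the first case some $Z$ has $\ad Z$ with an eigenvalue of nonzero real part. Since $\operatorname{tr}\ad Z$ is real, replacing $Z$ by $-Z$ if necessary, we may assume some eigenvalue has positive real part. Then $X_0=Z$ works, with $Y_0$ taken from the generalized eigenspace. This $Y_0$ is nonzero, and $X_0\neq0$ since $\ad X_0\neq0$.

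In the second case every $\ad Z$ has all eigenvalues equal to $0$, that is, every $\ad Z$ is nilpotent. Then $\mathfrak q$ is nilpotent by Engel's theorem. Being nonabelian, it contains $X_0$ with $\ad X_0\neq0$. A nonzero nilpotent operator has a Jordan block of size at least $2$ at eigenvalue $0$. Choose $Y_0$ with $\ad X_0 Y_0\neq0$ and $(\ad X_0)^2Y_0=0$: take $Y$ with $(\ad X_0)^kY=0$ but $(\ad X_0)^{k-1}Y\neq0$ for the maximal $k\ge2$, and set $Y_0=(\ad X_0)^{k-2}Y$. Then $\exp(R\ad X_0)Y_0=Y_0+R\,[X_0,Y_0]$, whose norm tends to infinity linearly in $R$.

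The expected obstacle is only bookkeeping in the first case: extracting a real vector $Y_0$ when the eigenvalue is complex. I would handle it as follows. Write the complex eigenvector as $v=v_1+iv_2$. Then $\exp(R\ad X_0)v$ has norm $e^{R\operatorname{Re}\lambda}\|v\|\to\infty$, so at least one of $\exp(R\ad X_0)v_1$, $\exp(R\ad X_0)v_2$ is unbounded along a sequence. To obtain full divergence as $R\to+\infty$, rather than divergence along a sequence, note that the real plane spanned by $v_1,v_2$ is invariant, and on it $\exp(R\ad X_0)$ acts as $e^{R\operatorname{Re}\lambda}$ times a rotation-like map with bounded inverse. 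Hence every nonzero vector of that plane, in particular $Y_0=v_1$ (or $v_2$ if $v_1=0$), grows like $e^{R\operatorname{Re}\lambda}$.
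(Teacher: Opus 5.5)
Your proof is correct and follows essentially the paper's route: a spectral dichotomy for $\ad X_0$. If some eigenvalue has nonzero real part, you get exponential growth on a real invariant subspace; you use the real plane of a single complex eigenvector, where the paper uses the whole positive-real-part generalized eigenspace and the contraction of $\exp(-R\ad X_0)$ on it. Otherwise $\ad X_0$ is a nonzero nilpotent map and growth is polynomial. Two harmless slips: the sign flip $Z\mapsto -Z$ works because $\ad(-Z)=-\ad Z$ negates the spectrum, not because $\operatorname{tr}\ad Z$ is real; and Engel's theorem is unnecessary, since you only need one $X_0$ with $\ad X_0\neq0$ nilpotent.
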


\begin{proof}
Since $\mathfrak q$ is nonabelian, choose $X_0$ such that
$A=\ad X_0\neq0$.  We distinguish two cases according to the spectrum of
$A$.

\Step{Step 1: an eigenvalue with nonzero real part}
Suppose that the complexification of $A$ has a nonzero eigenvalue.
Exponentiality excludes nonzero purely imaginary eigenvalues.  After
replacing $X_0$ by $-X_0$, if necessary, the sum of the generalized
eigenspaces with positive real part is nonzero and is stable under complex
conjugation.  Let $E$ be its corresponding nonzero real $A$-invariant
subspace.  Jordan normal form on the complexification gives
\begin{equation}\label{eq:contracting-inverse}
 \|\exp(-RA)|_E\|\longrightarrow0.
\end{equation}
Choose $0\neq Y_0\in E$.  Then
\[
 \|Y_0\|
 \leq \|\exp(-RA)|_E\|\,\|\exp(RA)Y_0\|.
\]
Since the first factor on the right tends to zero, the second must tend to
infinity, and the desired conclusion follows in this case.

\Step{Step 2: a nilpotent Jordan block}
We now consider the remaining case.  Suppose that every eigenvalue of $A$ is
zero.  Then $A$ is nonzero and
nilpotent.  Choose $Y_0$ and $k\geq1$ such that
$A^kY_0\neq0$ and $A^{k+1}Y_0=0$.  Thus
\begin{equation}\label{eq:nilpotent-growth}
 \exp(RA)Y_0=\sum_{j=0}^{k}\frac{R^j}{j!}A^jY_0.
\end{equation}
Choose a linear functional $\ell$ with $\ell(A^kY_0)\neq0$.  Applying
$\ell$ to \eqref{eq:nilpotent-growth} produces a scalar polynomial of degree
$k$ with nonzero leading coefficient.  Its absolute value tends to infinity,
and therefore so does $\|\exp(RA)Y_0\|$.

The two cases exhaust the spectrum of $A$ and therefore prove the lemma.
\end{proof}

The preceding Lie-algebra statement becomes an exact group-level escape
identity because the exponential map of $Q$ is a global diffeomorphism.

\begin{proposition}[The escape package]
\label{prop:geometric-escape}
Let $Q$ be a nonabelian exponential solvable Lie group.  There exist continuous
one-parameter subgroups
\[
 \overline b(R)=\exp_Q(RX_0),
 \qquad
 \overline p(t)=\exp_Q(tY_0)
\]
such that
\begin{align}
 \overline b(R)&\longrightarrow\infty,
 &&R\to+\infty,                                  \label{eq:b-escape-Q}\\
 \overline b(R)\overline p(s)^{-1}\overline p(t)
 \overline b(R)^{-1}&\longrightarrow\infty,
 &&R\to+\infty,\quad s\neq t.                  \label{eq:conj-escape-Q}
\end{align}
\end{proposition}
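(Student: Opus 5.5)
We take $X_0,Y_0$ from Lemma~\ref{lem:lie-escape} and define $\overline b,\overline p$ as in the statement; both are continuous one-parameter subgroups because $\exp_Q$ is smooth. The proof then reduces both escape assertions to escape of vectors in $\mathfrak q$, using that $\exp_Q:\mathfrak q\to Q$ is a global diffeomorphism. In particular, $\exp_Q$ is a homeomorphism, so a net $\exp_Q(Z_R)$ leaves every compact subset of $Q$ if and only if $\|Z_R\|\to\infty$. Indeed, compact subsets of $Q$ correspond exactly to compact, hence bounded, subsets of $\mathfrak q$, and conversely closed bounded sets in $\mathfrak q$ are compact.

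For \eqref{eq:conj-escape-Q}, first note that $\overline p(s)^{-1}\overline p(t)=\exp_Q((t-s)Y_0)$, since $\overline p$ is a one-parameter subgroup. Conjugation by $\overline b(R)=\exp_Q(RX_0)$ intertwines the exponential map with $\Ad(\exp RX_0)=\exp(R\ad X_0)$. This gives the exact identity
\[
 \overline b(R)\overline p(s)^{-1}\overline p(t)\overline b(R)^{-1}
 =\exp_Q\bigl((t-s)\exp(R\ad X_0)Y_0\bigr).
\]
Since $t-s\neq0$ is fixed, Lemma~\ref{lem:lie-escape} gives $\|(t-s)\exp(R\ad X_0)Y_0\|=|t-s|\,\|\exp(R\ad X_0)Y_0\|\to\infty$. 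The homeomorphism remark above then yields \eqref{eq:conj-escape-Q}.

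For \eqref{eq:b-escape-Q}, we have $\overline b(R)=\exp_Q(RX_0)$ with $X_0\neq0$, so $\|RX_0\|=R\|X_0\|\to\infty$. The same remark gives escape in $Q$.

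The only point requiring care is the first step, the correspondence between escape in $Q$ and norm blow-up in $\mathfrak q$. This is where global injectivity of $\exp_Q$ is essential. If $\exp_Q$ were not injective, for instance if it factored through a torus, bounded images could arise from unbounded Lie algebra elements. This would break \eqref{eq:b-escape-Q} and the conjugation step. Once $\exp_Q$ is known to be a homeomorphism, both parts follow directly from Lemma~\ref{lem:lie-escape}.
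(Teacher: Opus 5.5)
Your proposal is correct and follows the paper's proof essentially step for step. Like the paper, you take $X_0,Y_0$ from Lemma~\ref{lem:lie-escape}, use the exact identity $\overline b(R)\overline p(s)^{-1}\overline p(t)\overline b(R)^{-1}=\exp_Q\bigl((t-s)\exp(R\ad X_0)Y_0\bigr)$, and transfer escape from $\mathfrak q$ to $Q$ because $\exp_Q$ is a homeomorphism; your explicit equivalence between escape in $Q$ and norm blow-up in $\mathfrak q$ just spells out that last step in slightly more detail.
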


\begin{proof}
Choose $X_0,Y_0$ as in Lemma~\ref{lem:lie-escape}.  We verify separately the
escape of the storage curve and that of the conjugated retrieval curve.

\Step{Step 1: escape of the storage direction}
Because $X_0\neq0$, the curve $RX_0$ leaves every compact subset of
$\mathfrak q$.  The map $\exp_Q:\mathfrak q\to Q$ is a homeomorphism, so
$\overline b(R)=\exp_Q(RX_0)$ leaves every compact subset of $Q$.

\Step{Step 2: exact computation of the conjugate}
Having dealt with the storage direction, let $s\neq t$.  The one-parameter
group law and the standard conjugation
identity give
\begin{align}\label{eq:exact-conjugation}
 &\overline b(R)\overline p(s)^{-1}\overline p(t)
   \overline b(R)^{-1}\notag\\
 &\qquad
 =\exp_Q\bigl((t-s)\exp(R\ad X_0)Y_0\bigr).
\end{align}
Indeed, $\overline p(s)^{-1}\overline p(t)=\overline p(t-s)$ and
$\Ad(\exp_Q(RX_0))=\exp(R\ad X_0)$.

\Step{Step 3: convert Lie-algebra escape into group escape}
It remains to interpret the exact identity.  By
Lemma~\ref{lem:lie-escape}, the argument of $\exp_Q$ in
\eqref{eq:exact-conjugation} escapes in $\mathfrak q$.  Since $\exp_Q$ is a
homeomorphism, its image escapes in $Q$.  This proves both assertions.
\end{proof}

The next two examples illustrate the two spectral alternatives used in the
proof.  The affine group exhibits exponential expansion, while the
Heisenberg algebra exhibits the polynomial growth produced by a nilpotent
Jordan block.

\begin{example}[Exponential escape in the affine group]
\label{ex:affine-escape}
Write the connected affine group of the line as $Q=\R\ltimes\R$ with
\[
 (a,b)(a',b')=(a+a',b+\e^a b').
\]
Its Lie algebra has a basis $X,Y$ with $[X,Y]=Y$, and the exponential map is
a global diffeomorphism.  Since $\ad X(Y)=Y$,
\[
 \exp(R\ad X)Y=\e^R Y.
\]
At the group level,
\[
 (R,0)(0,t)(-R,0)=(0,\e^Rt).
\]
Consequently, every nontrivial translation is driven to infinity by positive
dilation, which is precisely the expanding spectral case in
Lemma~\ref{lem:lie-escape}.
\end{example}

\begin{example}[Polynomial escape in the Heisenberg algebra]
\label{ex:nilpotent-escape}
Let $\mathfrak q$ have basis $X,Y,Z$ with $[X,Y]=Z$ and $Z$ central.  Then
$(\ad X)Y=Z$ and $(\ad X)^2Y=0$, so
\[
 \exp(R\ad X)Y=Y+RZ.
\]
In the simply connected Heisenberg group,
\[
 \exp_Q(RX)\exp_Q(tY)\exp_Q(-RX)
 =\exp_Q(tY+tRZ).
\]
Thus the curve escapes polynomially.  The rate is irrelevant because
projective $C_0$
decay only requires eventual escape from compact sets.  For a Schr\"odinger
representation the center belongs to the projective kernel, so the effective
quotient is abelian; that particular representation is handled by
Section~\ref{sec:abelian}, not by the nonabelian storage branch.
\end{example}

\begin{figure}[t]
\centering
\begin{tikzpicture}
\begin{semilogyaxis}[
  width=.88\linewidth,
  height=.43\linewidth,
  xmin=0,xmax=6,
  ymin=1,ymax=500,
  domain=0:6,
  samples=160,
  axis line style={draw=OrbitInk},
  tick style={draw=OrbitInk},
  tick label style={font=\small,text=OrbitInk},
  label style={font=\small,text=OrbitInk},
  xlabel={storage parameter $R$},
  ylabel={model displacement from the identity},
  grid=both,
  major grid style={draw=OrbitInk!18},
  minor grid style={draw=OrbitInk!8},
  clip=false
]
\addplot[very thick,OrbitBlue] {exp(x)}
  node[pos=.82,above left,text=OrbitBlue,font=\small]
  {affine: $\e^R$};
\addplot[very thick,OrbitGold] {sqrt(1+x^2)}
  node[pos=.80,below right,text=OrbitGold,font=\small]
  {nilpotent: $\sqrt{1+R^2}$};
\addplot[densely dashed,OrbitRed,thick,domain=0:6] {5}
  node[pos=.97,above left,text=OrbitRed,font=\scriptsize]
  {any fixed compact scale};
\end{semilogyaxis}
\end{tikzpicture}
\caption{The two model escape mechanisms on a logarithmic vertical scale.
The storage proof uses no rate estimate: exponential and polynomial growth
are equally effective once every fixed compact scale is eventually crossed.}
\label{fig:escape-rates}
\end{figure}

We now return to an irreducible representation of the original group $G$.
The quotient dictionary allows the two escape directions in $Q$ to be lifted
to $G$, while projective $C_0$ decay converts their geometric escape into the
weak-operator limits required by the storage theorem.

\begin{corollary}[Lifting the escape directions]
\label{cor:lifted-escape}
Let $G$ be an exponential solvable Lie group, let $\pi$ be irreducible, set
$K=\pker(\pi)$, and suppose that $Q=G/K$ is nonabelian.  Then there exist
a continuous homomorphism $p:\R\to G$ and a sequence $(h_m)$ in $G$
satisfying the hypotheses of Theorem~\ref{thm:abstract-storage}.
\end{corollary}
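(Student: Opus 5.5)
The plan is to verify the hypotheses of Corollary~\ref{cor:projective-storage} with $A=\R$ (nondiscrete Hausdorff), which yields those of Theorem~\ref{thm:abstract-storage} at once. By Theorem~\ref{thm:projective-c0-input}, $Q=G/K$ is a nonabelian exponential solvable Lie group. By Corollary~\ref{cor:cross-c0}, $\pi$ is projectively $C_0$. Thus only the two geometric escape conditions \eqref{eq:geometric-storage-escape} and \eqref{eq:geometric-conjugation-escape} remain, and they must be produced by lifting the escape package of Proposition~\ref{prop:geometric-escape}.

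First, apply Proposition~\ref{prop:geometric-escape} to $Q$ to obtain $X_0,Y_0\in\mathfrak q$, with $\overline b(R)=\exp_Q(RX_0)$ and $\overline p(t)=\exp_Q(tY_0)$. By Proposition~\ref{prop:quotient-dictionary}(iv), $dq_e$ is surjective, so choose $X,Y\in\mathfrak g$ with $dq_e(X)=X_0$ and $dq_e(Y)=Y_0$. Define $p(t)=\exp_G(tY)$, a continuous homomorphism $\R\to G$, and put $h_m=\exp_G(mX)$. By \eqref{eq:quotient-exp-intertwining}, we have $q(h_m)=\overline b(m)$ and $q(p(t))=\overline p(t)$. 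Since $q$ is a homomorphism, it follows that
\[
 q\bigl(h_mp(t)h_m^{-1}\bigr)=\overline b(m)\,\overline p(t)\,\overline b(m)^{-1}.
\]

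Second, read off the escapes. Equation \eqref{eq:b-escape-Q}, restricted to $R=m\in\N$, gives $h_mK\to\infty$. Taking $s=0\neq t$ in \eqref{eq:conj-escape-Q} gives $h_mp(t)h_m^{-1}K\to\infty$ for every $t\neq0$. We may therefore take $U=\R$. Corollary~\ref{cor:projective-storage} then converts these escapes into the two WOT limits of Definition~\ref{def:mixing-expansion}, so $(\R,p,\R,(h_m))$ is a local mixing-expansion datum. The second-countability, local compactness and separability hypotheses are automatic for a Lie group acting on the given separable space, and $\cH$ is infinite-dimensional; in fact irreducibility with a nonabelian $Q$ would force this anyway.

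The main obstacle is essentially bookkeeping. One must make sure that "$\to\infty$ in $Q$" is precisely the escape notion fed into projective $C_0$ decay, which holds because $Q=G/K$ is identified topologically with the quotient group in Definition~\ref{def:projective-c0}. One must also confirm that restricting the continuous parameter $R$ to integers preserves escape, which is immediate since escape along $R\to+\infty$ implies escape along any sequence tending to $+\infty$.
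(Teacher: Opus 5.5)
Your proposal is correct and follows the paper's proof essentially step for step. You get exponentiality of $Q$ from Theorem~\ref{thm:projective-c0-input}, lift $X_0,Y_0$ through $dq_e$, read off both quotient escapes from Proposition~\ref{prop:geometric-escape} with $h_m=\exp_G(mX)$ and $A=U=\R$, and convert them into WOT limits via Corollaries~\ref{cor:cross-c0} and~\ref{cor:projective-storage}; your explicit choice $s=0$ in \eqref{eq:conj-escape-Q} and your remark on infinite-dimensionality are small clarifications the paper leaves implicit.
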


\begin{proof}
By Theorem~\ref{thm:projective-c0-input}, the quotient $Q$ is exponential.
We may therefore choose
$X_0,Y_0\in\mathfrak q$ as in Proposition~\ref{prop:geometric-escape} and
choose lifts $X,Y\in\mathfrak g$ under $dq_e$.  Define
\[
 b(R)=\exp_G(RX),
 \qquad p(t)=\exp_G(tY).
\]
By \eqref{eq:quotient-exp-intertwining}, their images in $Q$ are
$\overline b(R)$ and $\overline p(t)$.  Equations
\eqref{eq:b-escape-Q} and \eqref{eq:conj-escape-Q} give the geometric escape
hypotheses of Corollary~\ref{cor:projective-storage} for $h_m=b(m)$ and
$A=U=(\R,+)$.  Finally, Corollary~\ref{cor:cross-c0} supplies projective
$C_0$ decay, so both WOT hypotheses of
Theorem~\ref{thm:abstract-storage} follow.
\end{proof}

\section{Beyond exponential solvable Lie groups}
\label{sec:beyond-exponential}

The directional formulation separates the storage argument from the source
of weak mixing.  We now exploit this separation in two nonsolvable settings.
In both cases Howe--Moore decay supplies the analytic input, while
root-subgroup expansion supplies the geometry.

\subsection{Connected semisimple real Lie groups}

\begin{proof}[Proof of Theorem~\ref{thm:simple-main}]
We first identify an active noncompact factor.  Some connected noncompact
almost-simple normal factor $S$ of $G$ acts
nontrivially.  Indeed, otherwise $\pi$ would factor through the compact
semisimple quotient obtained from the compact factors (up to a finite central
quotient), whose irreducible unitary representations are finite-dimensional,
contrary to the hypothesis.  The subspace of $S$-fixed vectors
is $G$-invariant; irreducibility therefore shows that it is zero.  The
Howe--Moore theorem applied to $\pi|_S$ implies that
\begin{equation}\label{eq:howe-moore-real}
 \pi(g_m)\xrightarrow{\mathrm{WOT}}0
 \qquad\text{whenever }g_m\longrightarrow\infty\text{ in }S;
\end{equation}
see \cite{HoweMoore1979}.  This provides the required decay once appropriate
escaping sequences have been constructed.

We next construct those sequences from a restricted root.  Put
$\mathfrak s=\operatorname{Lie}(S)$ and choose a Cartan decomposition
$\mathfrak s=\mathfrak k_S\oplus\mathfrak p_S$, a maximal abelian subspace
$\mathfrak a\subset\mathfrak p_S$, and a nonzero
restricted root $\alpha$.  Choose $H\in\mathfrak a$ with $\alpha(H)>0$ and
$0\neq Y\in\mathfrak s_\alpha$.  With $A=(\R,+)$ and $U=A$, set
\[
 p(t)=\exp_G(tY),\qquad h_m=\exp_G(mH).
\]
The split torus $\exp_S(\mathfrak a)$ and the root subgroup
$\exp_G(\R Y)$ are closed, and their displayed parameters are proper; see
\cite[Chapter~VI]{Knapp2002}.  Hence $h_m\to\infty$.  Moreover,
\begin{equation}\label{eq:restricted-root-expansion}
 h_mp(t)h_m^{-1}
 =\exp_G\bigl(\e^{m\alpha(H)}tY\bigr)\longrightarrow\infty
 \qquad(t\neq0).
\end{equation}
Applying \eqref{eq:howe-moore-real} to the storage sequence and to every
nontrivial conjugated retrieval sequence shows that
$(A,p,(h_m))$ satisfies the hypotheses of
Theorem~\ref{thm:abstract-storage}.  The conclusion of that theorem proves
the result.
\end{proof}

\subsection{Special linear groups over arbitrary local fields}

\begin{proof}[Proof of Theorem~\ref{thm:local-field-main}]
We follow the same scheme, but now every ingredient can be written in matrix
coordinates.  Write $G=\operatorname{SL}_d(\mathbb k)$.  The representation is nontrivial
and has no nonzero invariant vector.  The Howe--Moore theorem over local
fields therefore gives
\begin{equation}\label{eq:howe-moore-local-field}
 \pi(g_m)\xrightarrow{\mathrm{WOT}}0
 \qquad\text{whenever }g_m\longrightarrow\infty\text{ in }G;
\end{equation}
see \cite{HoweMoore1979,Ciobotaru2014}.  It remains to exhibit an expanding
root subgroup.

Choose $c\in\mathbb k^\times$ with $|c|>1$, and let $E_{12}$ denote the
elementary matrix.  Set
\begin{equation}\label{eq:local-field-expanding-pair}
 h_m=\operatorname{diag}(c^m,c^{-m},1,\ldots,1),
 \qquad p(t)=I+tE_{12}\quad(t\in\mathbb k).
\end{equation}
The sequence $h_m$ leaves every compact subset of $G$, and direct
multiplication gives
\begin{equation}\label{eq:local-field-root-expansion}
 h_mp(t)h_m^{-1}=p(c^{2m}t).
\end{equation}
For $t\neq0$, the absolute value of the $(1,2)$ entry tends to infinity, so
the sequence on the right leaves every compact subset of $G$.  Taking
$A=(\mathbb k,+)$ and $U=A$, with $A$ nondiscrete, equations
\eqref{eq:howe-moore-local-field}--\eqref{eq:local-field-root-expansion}
give a local mixing-expansion datum.  Applying
Theorem~\ref{thm:abstract-storage} completes the proof.
\end{proof}

\begin{remark}[Reach and boundary of the extension]
The same proof extends to a split connected absolutely almost simple group
over a local field whenever the Howe--Moore property and properness of a
nontrivial root homomorphism are available; replacing $I+tE_{12}$ by that
root homomorphism then supplies the datum.  We keep the headline local-field
statement at $\operatorname{SL}_d$, where both inputs are explicit.
Theorem~\ref{thm:abstract-storage} also applies
representation by representation to nonexponential solvable groups whenever
the two directional WOT limits can be verified.  No blanket assertion for all
such solvable groups is made: recurrent adjoint rotations and failure of
projective coefficient decay can obstruct the present mechanism.
\end{remark}

\section{The abelian projective quotient}\label{sec:abelian}

The storage construction is unnecessary when the effective quotient is
abelian.  In that case the representation has a symplectic normal form and
the conclusion improves from a near-Parseval frame to an orthonormal basis.
We first recall the model that will be transported to the original
representation space.

For $x,\omega\in\R^d$, define the Weyl operator
\begin{equation}\label{eq:Weyl-operator}
 (W(x,\omega)F)(t)=\e^{2\pi i\omega\cdot t}F(t-x),
 \qquad F\in L^2(\R^d).
\end{equation}
If $u=(x,\omega)$ and $v=(x',\omega')$, direct multiplication gives
\begin{align}
 W(u)W(v)
 &=m_W(u,v)W(u+v),
 &m_W(u,v)&=\e^{-2\pi i\omega'\cdot x},
                                                        \label{eq:Weyl-multiplier}\\
 W(u)W(v)W(u)^{-1}W(v)^{-1}
 &=\e^{2\pi i\sigma(u,v)}I,
 &\sigma(u,v)&=\omega\cdot x'-\omega'\cdot x.
                                                        \label{eq:Weyl-commutator}
\end{align}
Thus the Weyl commutator is the exponential of the standard symplectic form.
The next lemma shows that every faithful projectively irreducible action of a
real vector group has, after a linear change of variables, exactly this
projective form.

\begin{lemma}[Faithful projective representations of vector groups%
]
\label{lem:projective-vector-group}
Let $\cH$ be an infinite-dimensional separable Hilbert space, let
$V=\R^r$, and let
\[
 U:V\longrightarrow\PU(\cH)
\]
be faithful, strongly continuous, and projectively irreducible.  Then
$r=2d$ for an integer $d\geq1$.  Moreover, there exist a linear isomorphism
$L:\R^{2d}\to V$ and a unitary $S:\cH\to L^2(\R^d)$ such that
\begin{equation}\label{eq:projective-Weyl-equivalence}
 (\operatorname{Ad}_S\circ U\circ L)(x,\omega)
 =[W(x,\omega)]
 \quad\text{in }\PU(L^2(\R^d)),
\end{equation}
where $\operatorname{Ad}_S([T])=[STS^{-1}]$.
\end{lemma}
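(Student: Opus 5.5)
The plan is to lift $U$ to a genuine projective (multiplier) representation, reduce the multiplier to a bicharacter by cohomology, use faithfulness and irreducibility to force that bicharacter to be nondegenerate, and then invoke Stone--von Neumann uniqueness.

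\textbf{Step 1: Borel lift and multiplier.} Since $\U(\cH)\to\PU(\cH)$ admits a Borel section for separable $\cH$ (Mackey), we choose a Borel map $\widetilde U:V\to\U(\cH)$ with $[\widetilde U(v)]=U(v)$ and $\widetilde U(0)=I$. The relation $\widetilde U(u)\widetilde U(v)=m(u,v)\widetilde U(u+v)$ then defines a Borel $\T$-valued $2$-cocycle $m$ on $V$.

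\textbf{Step 2: Normalize the cocycle.} We use the classical fact that for $V=\R^r$ every Borel multiplier is cohomologous to one of the form $\e^{\pi i\beta(u,v)}$, where $\beta$ is a real bilinear form. Equivalently, $H^2(\R^r,\T)$ is identified with the alternating forms on $\R^r$ (Kleppner, Bargmann). After adjusting $\widetilde U$ by a Borel function $V\to\T$, the commutator is
\[
 \widetilde U(u)\widetilde U(v)\widetilde U(u)^{-1}\widetilde U(v)^{-1}=\e^{2\pi i\omega_0(u,v)}I ,
\]
with $\omega_0$ an alternating form. This commutator depends only on the projective classes, so $\omega_0$ is intrinsic to $U$.

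\textbf{Step 3: Nondegeneracy of $\omega_0$.} Let $v\in\operatorname{rad}\omega_0$. Then $\widetilde U(v)$ commutes with every $\widetilde U(u)$. Projective irreducibility and Schur's lemma then make $\widetilde U(v)$ a scalar, so $U(v)$ is trivial. Faithfulness gives $v=0$. Hence $\omega_0$ is symplectic, $r=2d$, and infinite-dimensionality excludes $r=0$, so $d\ge1$.

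\textbf{Step 4: Symplectic basis and Stone--von Neumann.} Choose a linear isomorphism $L:\R^{2d}\to V$ carrying the standard form $\sigma$ of \eqref{eq:Weyl-commutator} to $\omega_0$, that is, $\omega_0(Lu,Lv)=\sigma(u,v)$. The cocycles $m\circ(L\times L)$ and $m_W$ of \eqref{eq:Weyl-multiplier} have the same commutator form. Since two multipliers on a vector group with the same antisymmetrization are cohomologous, we may rescale $\widetilde U\circ L$ by a Borel phase so that it becomes an $m_W$-representation. Composing with a phase correction if needed, this yields a genuine Heisenberg-group representation whose central character is the standard one. It is irreducible, because its invariant subspaces are exactly those of $U$. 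The Stone--von Neumann theorem then gives a unitary $S:\cH\to L^2(\R^d)$ intertwining it with $W$, and passing to projective classes gives \eqref{eq:projective-Weyl-equivalence}.

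\textbf{Main obstacle.} I expect the cohomological Step 2 and its use in Step 4 to be the hardest part, namely that Borel multipliers on $\R^r$ are classified, up to coboundary, by their alternating forms. I would cite this rather than reprove it. If needed, a direct route is to use the fact that the central extension $\T\times_m V$ is a Lie group whose Lie algebra is a central extension of the abelian algebra $\R^r$. Such an extension is determined by an alternating $2$-cocycle, which makes it a Heisenberg-type algebra, and this recovers $\beta$.
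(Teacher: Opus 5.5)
Your proposal is correct and follows essentially the same route as the paper: lift, commutator form, nondegeneracy via Schur and faithfulness, Darboux normalization, matching the full multiplier, then Stone--von Neumann. The differences are that you use a Borel lift and cite the classification of multipliers on $\R^r$, whereas the paper builds a continuous lift by bundle triviality and proves the cohomological facts directly via Pontryagin duality and the character-extension theorem. The one detail you should add is that your rephased $m_W$-representation is a priori only Borel, so you need automatic continuity of measurable unitary representations on separable spaces (or Mackey's Borel form of the uniqueness theorem) before applying Stone--von Neumann.
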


\begin{proof}
We divide the proof into seven steps.  The first two steps lift the projective
action and record its multiplier and scalar commutator.  Steps~3--5 convert
that commutator into a nondegenerate symplectic form.  Step~6 matches the full
multiplier, rather than only its commutator, and Step~7 applies
Stone--von Neumann uniqueness.

\Step{Step 1: choose a continuous unitary lift}
The quotient map
\[
 q:\U(\cH)\longrightarrow\PU(\cH)
\]
is a locally trivial principal $\T$-bundle in the strong topology
\cite{Simms1970}.  Pulling this bundle back along $U$ gives a numerable
principal circle bundle over the paracompact contractible space $V=\R^r$.
The standard triviality theorem for principal bundles over contractible
paracompact bases implies that this pullback is trivial; see, for example,
\cite{Husemoller1994}.  Consequently, there is a strongly continuous map
\[
 \widetilde U:V\longrightarrow\U(\cH),
 \qquad q(\widetilde U(x))=U(x).
\]
After multiplying the lift by a constant scalar, we may assume
$\widetilde U(0)=I_{\cH}$.

\Step{Step 2: record the multiplier and scalar commutator}
Having chosen the lift, we record its failure to be a genuine
representation.  Because $U$ is a homomorphism, there is a unique scalar
$m(x,y)\in\T$ such
that
\begin{equation}\label{eq:lift-multiplier}
 \widetilde U(x)\widetilde U(y)=m(x,y)\widetilde U(x+y).
\end{equation}
The scalar depends continuously on $(x,y)$.  Indeed,
$\widetilde U(x)\widetilde U(y)\widetilde U(x+y)^{-1}$ is a strongly
continuous scalar operator, and its scalar is recovered from a matrix
coefficient against any fixed unit vector.  Associativity gives
\begin{equation}\label{eq:multiplier-identity}
 m(x,y)m(x+y,z)=m(y,z)m(x,y+z),
\end{equation}
and normalization gives $m(0,x)=m(x,0)=1$.

Since $V$ is abelian, the commutator of two representatives is scalar.  Set
\begin{equation}\label{eq:beta-definition}
 \beta(x,y)I_{\cH}
 =\widetilde U(x)\widetilde U(y)
  \widetilde U(x)^{-1}\widetilde U(y)^{-1}.
\end{equation}
Equivalently,
\begin{equation}\label{eq:beta-from-m}
 \beta(x,y)=m(x,y)m(y,x)^{-1}.
\end{equation}
Scalar phases cancel in \eqref{eq:beta-definition}, so $\beta$ is independent
of the chosen lift.

\Step{Step 3: prove that $\beta$ is an alternating bicharacter}
We next determine the algebraic structure carried by the commutator.
Continuity follows from \eqref{eq:beta-from-m}.  The commutators in
\eqref{eq:beta-definition} are central; hence the usual commutator identities
reduce to
\[
 \beta(x+x',y)=\beta(x,y)\beta(x',y),
 \qquad
 \beta(x,y+y')=\beta(x,y)\beta(x,y').
\]
Moreover,
\begin{equation}\label{eq:beta-alternating}
 \beta(x,x)=1,
 \qquad \beta(y,x)=\beta(x,y)^{-1}.
\end{equation}
Thus $\beta$ is a continuous alternating bicharacter.

\Step{Step 4: pass from the bicharacter to a bilinear form}
We now linearize the bicharacter.  Equip $\widehat V$ with the compact-open
topology.  The canonical map
\[
 V^*\longrightarrow\widehat V,
 \qquad
 \ell\longmapsto\bigl[x\mapsto\e^{2\pi i\ell(x)}\bigr],
\]
is a topological group isomorphism.  For each $y\in V$, the function
$x\mapsto\beta(x,y)$ is a continuous character, so there is a unique
$\ell_y\in V^*$ with $\beta(x,y)=\e^{2\pi i\ell_y(x)}$.  The joint
continuity of $\beta$ implies that $y\mapsto\beta(\,\cdot\,,y)$ is
continuous as a map into $\widehat V$ with its compact-open topology, and
hence that $y\mapsto\ell_y$ is continuous.  Bicharacter
multiplicativity makes this map additive, and every continuous additive map
between finite-dimensional real vector spaces is real-linear.  Consequently,
\begin{equation}\label{eq:B-definition}
 B(x,y):=\ell_y(x)
\end{equation}
is real bilinear and
\begin{equation}\label{eq:beta-B}
 \beta(x,y)=\e^{2\pi iB(x,y)}.
\end{equation}

Equation \eqref{eq:beta-alternating} gives $B(x,x)\in\mathbb Z$.  The
function $x\mapsto B(x,x)$ is continuous, $V$ is connected, and its value at
zero is zero.  Therefore $B(x,x)=0$ for all $x$.  Polarization over $\R$
then gives $B(x,y)=-B(y,x)$.

\Step{Step 5: use faithfulness to remove the radical}
The bilinear form is alternating, but it remains to prove that it is
nondegenerate.  Let
\[
 \operatorname{rad}(B)
 =\{x\in V:B(x,y)=0\text{ for every }y\in V\}.
\]
If $x\in\operatorname{rad}(B)$, then $\beta(x,y)=1$ for every $y$, so
$\widetilde U(x)$ commutes with all $\widetilde U(y)$.  Projective
irreducibility and Schur's lemma imply that $\widetilde U(x)$ is scalar.
Thus $U(x)$ is the identity in $\PU(\cH)$; faithfulness gives $x=0$.
Hence $B$ is nondegenerate.

A nondegenerate alternating form exists only in even dimension.  Thus
$r=2d$, and elementary symplectic linear algebra provides a linear
isomorphism $L:\R^{2d}\to V$ such that
\begin{equation}\label{eq:Darboux-normalization}
 B\bigl(L(x,\omega),L(x',\omega')\bigr)
 =\omega\cdot x'-\omega'\cdot x.
\end{equation}
The integer $d$ is positive: if $V=\{0\}$, projective irreducibility would
force $\dim\cH=1$.

\Step{Step 6: match the multiplier, not merely its commutator}
The commutator determines the symplectic geometry, but projective unitary
equivalence requires control of the entire multiplier.  Let $m_L$ be the
multiplier of $u\mapsto\widetilde U(Lu)$, and let $m_W$ be
the multiplier in \eqref{eq:Weyl-multiplier}.  Equations
\eqref{eq:beta-B}, \eqref{eq:Darboux-normalization}, and
\eqref{eq:Weyl-commutator} show that the two multipliers have the same
commutator.  Therefore
\[
 c(u,v):=m_L(u,v)m_W(u,v)^{-1}
\]
is a continuous symmetric multiplier on $\R^{2d}$.

We verify directly that $c$ is a continuous coboundary.  Give
$E_c=\T\times\R^{2d}$ the product topology and the multiplication
\begin{equation}\label{eq:central-extension-product}
 (z,u)(w,v)=(zwc(u,v),u+v).
\end{equation}
The multiplier identity makes this operation associative, and symmetry of
$c$ makes $E_c$ abelian.  Its identity is $(1,0)$ and
\[
 (z,u)^{-1}=\bigl(z^{-1}c(u,-u)^{-1},-u\bigr).
\]
Continuity of $c$ makes multiplication and inversion continuous.  Thus
$E_c$ is a locally compact abelian group, and $\T\times\{0\}$ is a closed
subgroup.  By the character-extension theorem
\cite[Corollary~4.42]{FollandAbstract2016}, the character $(z,0)\mapsto z$
extends to a continuous character $\chi$ of $E_c$.  Set $a(u)=\chi(1,u)$.
Applying $\chi$ to $(1,u)(1,v)=(c(u,v),u+v)$ gives
\begin{equation}\label{eq:c-coboundary}
 c(u,v)=\frac{a(u)a(v)}{a(u+v)}.
\end{equation}
The rephased lift
\begin{equation}\label{eq:rephased-lift}
 \widehat U(u)=a(u)^{-1}\widetilde U(Lu)
\end{equation}
therefore has multiplier exactly $m_W$.  This is the explicit continuous
cohomological step behind the general framework in \cite{Kleppner1965}.

\Step{Step 7: apply Stone--von Neumann uniqueness}
Having matched the multipliers, we can invoke uniqueness.  The map
$\widehat U$ is strongly continuous, irreducible, and has exactly the
multiplier $m_W$.  The Stone--von Neumann uniqueness theorem in multiplier
form states that every irreducible strongly continuous $m_W$-representation
of $\R^{2d}$ is unitarily equivalent to the Schr\"odinger--Weyl
representation with the same multiplier.  Hence there is a unitary
$S:\cH\to L^2(\R^d)$ such that
\[
 S\widehat U(x,\omega)S^{-1}=W(x,\omega).
\]
Passing to projective classes removes the rephasing in
\eqref{eq:rephased-lift} and proves
\eqref{eq:projective-Weyl-equivalence}.  See
\cite[Theorem~3.3 and p.~314]{BaggettKleppner1973} for multiplier
representations and \cite[Theorem~1.50]{Folland1989} for the classical
Stone--von Neumann theorem.  This completes the proof.
\end{proof}

The faithfulness assumption in the preceding lemma is essential.  The
following elementary example shows that its precise role is to remove the
radical of the commutator form before the Weyl model is invoked.

\begin{example}[Why the effective quotient is necessary]
\label{ex:radical-before-quotient}
On $L^2(\R)$ define a projective representation of $\R^3$ by
\[
 U(x,\omega,s)=[W(x,\omega)].
\]
The third coordinate acts trivially in $\PU(L^2(\R))$.  The commutator form
\[
 B\bigl((x,\omega,s),(x',\omega',s')\bigr)
 =\omega x'-\omega'x
\]
has radical $\{(0,0,s):s\in\R\}$.  Passing to the effective quotient removes
exactly this radical and leaves the symplectic plane $\R^2$.  This example
pinpoints the use of faithfulness in Step~5.
\end{example}

We can now transport the standard Gabor orthonormal basis through the
projective equivalence of Lemma~\ref{lem:projective-vector-group}.  This
gives the complete conclusion in the abelian branch.

\begin{proposition}[The abelian branch]
\label{prop:abelian-quotient}
Let $G$ be an exponential solvable Lie group, let
$\pi:G\to\U(\cH)$ be an infinite-dimensional irreducible unitary
representation, and put $K=\pker(\pi)$.  If $Q=G/K$ is abelian, then there
exist a unit vector $\phi\in\cH$ and a countable set $\Gamma\subset G$ such
that
\[
 \{\pi(\gamma)\phi:\gamma\in\Gamma\}
\]
is an orthonormal basis of $\cH$.  The image of $\Gamma$ in $Q$ is a closed
discrete subgroup.
\end{proposition}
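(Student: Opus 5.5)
The plan is to reduce to Lemma~\ref{lem:projective-vector-group} and then transport the standard Gabor orthonormal basis. By Theorem~\ref{thm:projective-c0-input}, $Q=G/K$ is an exponential solvable Lie group. An abelian connected, simply connected Lie group is a vector group, so $\exp_Q:\mathfrak q\to Q$ is an isomorphism of topological groups and we may identify $Q\cong V=\R^r$. By Proposition~\ref{prop:quotient-dictionary}(iii), $\overline\pi:Q\to\PU(\cH)$ is faithful, continuous and projectively irreducible. Lemma~\ref{lem:projective-vector-group} therefore applies: $r=2d$ with $d\geq1$, and there are $L:\R^{2d}\to V$ and a unitary $S:\cH\to L^2(\R^d)$ with $S\,\overline\pi(L(x,\omega))\,S^{-1}\in\T\,W(x,\omega)$ for all $(x,\omega)$.

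Next we fix the model orthonormal basis. In $L^2(\R^d)$, take $F=\mathbf 1_{[0,1)^d}$. The family $\{W(k,\ell)F:(k,\ell)\in\mathbb Z^d\times\mathbb Z^d\}$ is an orthonormal basis: for fixed $k$, the functions $\e^{2\pi i\ell\cdot t}\mathbf 1_{k+[0,1)^d}$ form an orthonormal basis of $L^2(k+[0,1)^d)$, and these cubes tile $\R^d$. Set $\phi=S^{-1}F$, which is a unit vector. Let $\Lambda=L(\mathbb Z^{2d})\subset Q$, and for each $\lambda=L(k,\ell)\in\Lambda$ choose one $\gamma_\lambda\in G$ with $q(\gamma_\lambda)=\lambda$; this is the set-theoretic choice described in Remark~\ref{rem:three-lifts}. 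Put $\Gamma=\{\gamma_\lambda\}$. Then
\[
 \pi(\gamma_\lambda)\phi=S^{-1}\bigl(S\pi(\gamma_\lambda)S^{-1}\bigr)F=\zeta_\lambda\,S^{-1}W(k,\ell)F
\]
for some $\zeta_\lambda\in\T$. Multiplying the vectors of an orthonormal basis by unimodular scalars, and applying the unitary $S^{-1}$, preserves the orthonormal basis property. Hence $\{\pi(\gamma)\phi:\gamma\in\Gamma\}$ is an orthonormal basis of $\cH$. Because $q$ is a bijection from $\Gamma$ onto $\Lambda$, the basis is indexed without repetition.

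Finally, $q(\Gamma)=\Lambda$ is the image of $\mathbb Z^{2d}$ under a linear isomorphism composed with the topological isomorphism $V\cong Q$. It is therefore a closed discrete subgroup of $Q$, and in fact a lattice, since it is cocompact.

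The only delicate point is the identification of $Q$ with a vector group, so that Lemma~\ref{lem:projective-vector-group} applies verbatim. This requires using that an abelian exponential group is exactly $(\mathfrak q,+)$ via $\exp_Q$, which holds because the Baker--Campbell--Hausdorff series truncates to $X+Y$ when $\mathfrak q$ is abelian. Everything else is bookkeeping with projective phases, which never affect orthonormality.
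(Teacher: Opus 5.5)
Your proposal is correct and follows the paper's proof essentially step for step. You identify the abelian exponential quotient $Q$ with $\R^r$ via $\exp_Q$, and apply Lemma~\ref{lem:projective-vector-group} to the faithful, projectively irreducible action supplied by Proposition~\ref{prop:quotient-dictionary}. You then transport the Gabor basis $\{W(k,\ell)\mathbf 1_{[0,1)^d}\}$ by $S^{-1}$ and lift $L(\mathbb Z^{2d})$ set-theoretically to $G$, with the unimodular phases $\zeta_\lambda$ affecting neither orthonormality nor completeness and $q(\Gamma)=L(\mathbb Z^{2d})$ a closed discrete subgroup.
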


\begin{proof}
The proof proceeds by identifying the effective quotient, placing its action
in Weyl form, and then lifting the standard lattice back to $G$.

\Step{Step 1: identify the quotient with a vector group}
By Theorem~\ref{thm:projective-c0-input}, $Q$ is exponential.  Since it is
also abelian, its exponential map is a Lie-group isomorphism from the
additive Lie algebra onto $Q$.  Hence $Q\simeq\R^r$.  This identification
places the effective quotient within the scope of
Lemma~\ref{lem:projective-vector-group}.

\Step{Step 2: use the effective projective representation}
With this identification in place,
Proposition~\ref{prop:quotient-dictionary} gives a faithful, strongly
continuous, projectively irreducible homomorphism
\[
 U:Q\longrightarrow\PU(\cH),
 \qquad U(gK)=[\pi(g)].
\]
Thus faithfulness, strong continuity, and projective irreducibility---the
three hypotheses of Lemma~\ref{lem:projective-vector-group}---are all
available.

\Step{Step 3: put the quotient action in Weyl form}
We may now apply Lemma~\ref{lem:projective-vector-group}.  It gives $r=2d$,
a linear isomorphism
$L:\R^{2d}\to Q$, and a unitary $S:\cH\to L^2(\R^d)$ satisfying
\eqref{eq:projective-Weyl-equivalence}.  The orbit problem has therefore been
reduced to the explicit Weyl model, where a canonical lattice orthonormal
basis is available.

\Step{Step 4: verify the model orthonormal basis}
We next identify the orthonormal basis in the model representation.  Let
$C=[0,1)^d$ and $\phi_0=\mathbf1_C$.  The family
\begin{equation}\label{eq:Gabor-ONB}
 \mathcal B_0
 =\{W(k,\ell)\phi_0:(k,\ell)\in\mathbb Z^d\times\mathbb Z^d\}
\end{equation}
is an orthonormal basis of $L^2(\R^d)$.  If $k\neq k'$, the corresponding
functions have disjoint supports $k+C$ and $k'+C$ up to null sets.  For a
fixed $k$, translation identifies the functions indexed by
$\ell\in\mathbb Z^d$ with the standard Fourier basis of $L^2(C)$.  Finally,
\[
 L^2(\R^d)=\bigoplus_{k\in\mathbb Z^d}L^2(k+C),
\]
which proves completeness and hence establishes the model assertion.

\Step{Step 5: transport and lift the lattice}
It remains to transport the model family and lift its parameters.  The set
\begin{equation}\label{eq:quotient-lattice}
 \Lambda=L(\mathbb Z^{2d})\subset Q
\end{equation}
is a closed discrete subgroup.  Put $\phi=S^{-1}\phi_0$.  For each
$\lambda=L(k,\ell)\in\Lambda$, choose $g_\lambda\in G$ with
$g_\lambda K=\lambda$.  Equation
\eqref{eq:projective-Weyl-equivalence} gives a scalar $z_\lambda\in\T$ for
which
\begin{equation}\label{eq:lifted-Weyl-atom}
 S\pi(g_\lambda)\phi=z_\lambda W(k,\ell)\phi_0.
\end{equation}
Unimodular scalars preserve orthonormality and completeness.  Thus
\[
 \Gamma=\{g_\lambda:\lambda\in\Lambda\}
\]
has the asserted properties, and its image in $Q$ is exactly $\Lambda$.
This completes the proof.
\end{proof}

The next two examples make the transport procedure explicit.  The first
recovers the standard Gabor basis from the Schr\"odinger representation,
whereas the second shows how a nonstandard lattice arises in the original
quotient coordinates.

\begin{example}[The Schr\"odinger representation]\label{ex:heisenberg}
Let $H_d=\R^d\times\R^d\times\R$ with multiplication
\[
 (x,\omega,z)(x',\omega',z')
 =\bigl(x+x',\omega+\omega',z+z'-\omega'\cdot x\bigr).
\]
For $\lambda\neq0$, define
\begin{equation}\label{eq:Schrodinger-representation}
 \bigl(\pi_\lambda(x,\omega,z)F\bigr)(t)
 =\e^{2\pi i\lambda(z+\omega\cdot t)}F(t-x).
\end{equation}
The projective kernel is the center
$K=\{(0,0,z):z\in\R\}$, and $H_d/K\simeq\R^{2d}$.  With
$T_xF(t)=F(t-x)$ and $M_\eta F(t)=\e^{2\pi i\eta\cdot t}F(t)$, the identity
$T_xM_\eta=\e^{-2\pi i\eta\cdot x}M_\eta T_x$ verifies the displayed
group law.  Moreover, $M_{\lambda\omega}T_x$ is scalar only when
$x=\omega=0$, which verifies the projective-kernel assertion.  With
$\phi_0=\mathbf1_{[0,1)^d}$ and
\[
 \Gamma_\lambda
 =\{(k,\lambda^{-1}\ell,0):k,\ell\in\mathbb Z^d\},
\]
one has
\[
 \pi_\lambda(k,\lambda^{-1}\ell,0)\phi_0
 =W(k,\ell)\phi_0.
\]
The abstract lattice construction therefore recovers the usual Gabor
orthonormal basis exactly.
\end{example}

\begin{example}[An anisotropic phase-space lattice]
\label{ex:anisotropic}
Let $A\in\operatorname{GL}_d(\R)$ and give
$G_A=\R^d\times\R^d\times\R$ the multiplication
\[
 (x,\omega,z)(x',\omega',z')
 =\bigl(x+x',\omega+\omega',
        z+z'-(A\omega')\cdot x\bigr).
\]
The irreducible representation
\[
 \bigl(\pi_A(x,\omega,z)F\bigr)(t)
 =\e^{2\pi iz}\e^{2\pi i(A\omega)\cdot t}F(t-x)
\]
has the central $z$-axis as projective kernel.  On the quotient its
commutator form is
\[
 B_A\bigl((x,\omega),(x',\omega')\bigr)
 =(A\omega)\cdot x'-(A\omega')\cdot x.
\]
Indeed, the linear change of variables
$(x,\omega,z)\mapsto(x,A\omega,z)$ identifies this group and representation
with the standard Heisenberg--Weyl model; irreducibility and the stated
projective kernel follow at once.  Under this identification, the Darboux
map is $L_A(x,\eta)=(x,A^{-1}\eta)$.  Hence
\[
 \Gamma_A
 =\{(k,A^{-1}\ell,0):k,\ell\in\mathbb Z^d\}
\]
and $\mathbf1_{[0,1)^d}$ generate an orbit orthonormal basis.  This example
shows why the lattice in the original quotient coordinates need not be the
standard integer lattice.
\end{example}

\begin{remark}[Strength of the abelian branch]
Proposition~\ref{prop:abelian-quotient} is stronger than the abelian case of
Theorem~\ref{thm:main}: an orthonormal basis satisfies
\eqref{eq:near-parseval-frame} for every $0<\varepsilon<1$.  Approximation enters only
in the nonabelian storage branch.
\end{remark}

\section{Irreducible orbit frames and final remarks}
\label{sec:proof-main}

We now combine the projective-$C_0$ input with the two mutually exclusive
geometries of the effective quotient.  This final dichotomy proves the main
theorem and also clarifies exactly where approximation is needed.

\begin{proof}[Proof of Theorem~\ref{thm:main}]
Set $K=\pker(\pi)$ and let $q:G\to Q=G/K$ be the quotient map.  We first
collect the structural information proved earlier.  By
Theorem~\ref{thm:projective-c0-input}, the subgroup $K$ is connected and the
quotient $Q$ is exponential.  Moreover,
Corollary~\ref{cor:cross-c0} shows that $\pi$ is projectively $C_0$.

\Step{Step 1: the nonabelian effective quotient}
Suppose first that $Q$ is nonabelian.  In this case,
Corollary~\ref{cor:lifted-escape} supplies a
local mixing-expansion datum for $\pi$.  Theorem~\ref{thm:abstract-storage}
then produces a nonzero vector
$\phi\in\cH$ and a countable set $\Gamma\subset G$ satisfying
\eqref{eq:near-parseval-frame}.  It also makes $q$ injective on $\Gamma$ and
$q(\Gamma)$ left relatively separated.

\Step{Step 2: the abelian effective quotient}
We now consider the remaining case and suppose that $Q$ is abelian.
Proposition~\ref{prop:abelian-quotient}
produces a unit vector $\phi$ and one representative in $G$ of each point of
a lattice $\Lambda\subset Q$ such that the resulting orbit is an
orthonormal basis.  Thus $q$ is injective on the chosen set $\Gamma$,
$q(\Gamma)=\Lambda$ is relatively separated, and an orthonormal basis
satisfies \eqref{eq:near-parseval-frame} for every
$0<\varepsilon<1$.

The two cases exhaust all possibilities for $Q$, and the theorem follows.
\end{proof}

\begin{remark}[Why near-Parseval does not automatically mean Parseval]
In the nonabelian branch, the construction gives bounds
$(1-\varepsilon)^2$ and $(1+\varepsilon)^2$ for every $\varepsilon>0$.
This does not produce an exact Parseval orbit frame.  If $S_\Gamma$ is the
frame operator, canonical tightening gives the Parseval family
\[
 \bigl(S_\Gamma^{-1/2}\pi(\gamma)\phi\bigr)_{\gamma\in\Gamma}.
\]
For this family to remain the orbit of one vector, one would need enough
commutation between $S_\Gamma$ and the sampled operators.  Such commutation
is not supplied by the storage construction.  Exact Parseval or
orthonormal refinements therefore remain a separate question when the
effective quotient is nonabelian.
\end{remark}

\begin{remark}[Where exponentiality remains used]
Exponentiality is a sufficient hypothesis for Theorem~\ref{thm:main}, not a
hypothesis of the storage theorem.  In the exponential solvable application,
published results supply a connected projective kernel, an exponential
effective quotient, and projective coefficient decay.  The spectral criterion
then excludes recurrent nonzero purely imaginary adjoint eigenvalues and
forces exponential or polynomial conjugation escape.  A general connected
solvable Lie group may have recurrent adjoint rotations, so no universal
nonexponential-solvable statement is asserted.  Nevertheless, any individual
representation in that larger class is covered as soon as it admits the local
mixing-expansion datum of Definition~\ref{def:mixing-expansion}.  The
semisimple and local-field theorems demonstrate that exponentiality and even
solvability are not intrinsic to the storage mechanism.
\end{remark}

\begin{remark}[What the examples show]
Examples~\ref{ex:affine-escape} and \ref{ex:nilpotent-escape} exhibit the two
growth mechanisms behind the nonabelian branch.  Examples
\ref{ex:heisenberg} and \ref{ex:anisotropic} show how an abelian effective
quotient recovers concrete Gabor orthonormal bases, possibly on a
nonstandard lattice.  Example~\ref{ex:finite-parseval-block} explains the
redundant Parseval target, while Example~\ref{ex:explicit-weights} explains
the rapid scale separation required to store infinitely many blocks in one
vector.
\end{remark}

\section{A nonnilpotent triangular model in dimension four}
\label{sec:B4-model}

We conclude the body of the paper by making the preceding mechanism fully
explicit for the connected group
\[
 B_4^+
 =
 \left\{
 g=(g_{ij})_{i,j=1}^4:
 g_{ij}=0\ \text{if }i>j,\quad g_{ii}>0
 \right\}.
\]
This choice displays two features at once: the group $B_4^+$ is completely
solvable and nonnilpotent, and, unlike a unitriangular group, it retains a
nontrivial diagonal action while remaining within the exponential
class.

\subsection{Scalar, diagonal, and unitriangular factors}

Let
\[
 A=\{\operatorname{diag}(a_1,a_2,a_3,a_4):a_j>0\},
 \qquad
 Z_+=\{rI_4:r>0\},
\]
and let $N=UT_4(\R)$ be the unitriangular subgroup.  With $A$ and $N$ so
defined, every element of $B_4^+$ admits a unique factorization
\[
 g=na,\qquad n\in N,\quad a\in A.
\]
We write
\[
 n(x,y,z)=
 \begin{pmatrix}
 1&x_1&y_1&z\\
 0&1&x_2&y_2\\
 0&0&1&x_3\\
 0&0&0&1
 \end{pmatrix}.
\]
The multiplication in the nilpotent factor will be used repeatedly below.
Direct matrix multiplication gives
\begin{align}
 &n(x,y,z)n(x',y',z') \notag\\
 &\quad =
 n\Bigl(
 x+x',
 \bigl(y_1+y_1'+x_1x_2',\,y_2+y_2'+x_2x_3'\bigr),
 z+z'+x_1y_2'+y_1x_3'
 \Bigr).
 \label{eq:B4-N-law}
\end{align}
The commutator subgroup is now visible directly from this formula.  Namely,
\begin{equation}
 [N,N]
 =N_2
 =\left\{
 \begin{pmatrix}
 1&0&y_1&z\\
 0&1&0&y_2\\
 0&0&1&0\\
 0&0&0&1
 \end{pmatrix}
 :y_1,y_2,z\in\R
 \right\}.
 \label{eq:B4-N2}
\end{equation}
Accordingly, $N/N_2\simeq\R^3$, and we use the quotient coordinates
\[
 x(n)=(n_{12},n_{23},n_{34}).
\]

We next separate the scalar direction from the effective diagonal action.
The determinant-one factor is
\[
 B_4^{+,1}=\{g\in B_4^+:\det g=1\}.
\]
The center of $B_4^+$ is $Z_+$, and multiplication gives a direct-product
isomorphism
\begin{equation}
 \R_{>0}\times B_4^{+,1}\longrightarrow B_4^+,
 \qquad
 (r,h)\longmapsto rh.
 \label{eq:B4-scalar-splitting}
\end{equation}
The inverse is equally explicit: it sends $g$ to
\[
 \left((\det g)^{1/4},\,(\det g)^{-1/4}g\right).
\]
Because the only positive scalar matrix of determinant one is $I_4$, the
group $B_4^{+,1}$ has trivial center.

To express the remaining diagonal action in coordinates adapted to the
simple roots, let $a=\operatorname{diag}(a_1,a_2,a_3,a_4)$ and put
\begin{equation}
 s_j(a)=\log\frac{a_{j+1}}{a_j},
 \qquad 1\leq j\leq3.
 \label{eq:B4-simple-root-coordinates}
\end{equation}
Then $aZ_+\mapsto s(a)$ identifies $A/Z_+$ with $\R^3$, and its
determinant-one inverse is
\begin{equation}
 a(t)=\operatorname{diag}\left(
 \e^{-(3t_1+2t_2+t_3)/4},
 \e^{(t_1-2t_2-t_3)/4},
 \e^{(t_1+2t_2-t_3)/4},
 \e^{(t_1+2t_2+3t_3)/4}
 \right).
 \label{eq:B4-diagonal-section}
\end{equation}

These coordinates are particularly convenient for conjugation.  Indeed,
\[
 aE_{ij}a^{-1}=\frac{a_i}{a_j}E_{ij}.
\]
Therefore, modulo $N_2$,
\[
 x_j(ana^{-1})=\e^{-s_j(a)}x_j(n).
\]
Combining this identity with the preceding factorization, we obtain, for
$g=na$ and $g'=n'a'$,
\begin{equation}
 x(gg')=x(n)+\e^{-s(a)}\odot x(n'),
 \qquad
 s(aa')=s(a)+s(a'),
 \label{eq:B4-effective-law}
\end{equation}
where $\odot$ denotes coordinatewise multiplication.

\subsection{Complete solvability and failure of nilpotence}

The same triangular structure also makes the solvability properties
transparent.  Let $\mathfrak b_4$ be the Lie algebra of all real
upper-triangular $4\times4$ matrices.  For
$D=\operatorname{diag}(d_1,d_2,d_3,d_4)$,
\[
 [D,E_{ij}]=(d_i-d_j)E_{ij}.
\]
In a basis ordered by root height, every $\ad X$, $X\in\mathfrak b_4$,
is upper triangular over $\R$, with diagonal entries among
\[
 0,\qquad d_i-d_j\quad (i<j).
\]
Thus every adjoint map has only real eigenvalues, and $\mathfrak b_4$ is
completely solvable.

Complete solvability should not be confused here with nilpotence.  Indeed,
with
\[
 D_1=\frac14\operatorname{diag}(3,-1,-1,-1)
\]
one has
\[
 [D_1,E_{12}]=E_{12}.
\]
Thus $\ad D_1$ is not nilpotent, and neither $\mathfrak b_4$ nor
$B_4^+$ is nilpotent.  At the same time, $B_4^+\simeq\R^{10}$ is connected
and simply connected, and its adjoint spectra contain no nonzero purely
imaginary numbers.  The standard exponentiality criterion therefore shows
that the exponential map is a global diffeomorphism:
\[
 \exp:\mathfrak b_4\longrightarrow B_4^+.
\]

\begin{remark}
Although this example is nonnilpotent, it does not lie beyond the class of
exponential solvable Lie groups.  Its role is instead to show concretely that
the storage theorem reaches well beyond the nilpotent setting.
\end{remark}

\subsection{An explicit irreducible representation}

Having isolated the effective coordinates, we now construct a representation
in which all three simple-root directions remain visible.  Fix
\[
 \xi=(\xi_1,\xi_2,\xi_3)\in(\R\setminus\{0\})^3.
\]
The formula
\begin{equation}
 \chi_\xi(n)
 =\exp\left(2\pi i\sum_{j=1}^3\xi_jx_j(n)\right)
 \label{eq:B4-N-character}
\end{equation}
defines a unitary character of $N$, since the first-superdiagonal coordinates
add under multiplication.  Its behavior under diagonal conjugation is
\[
 \chi_\xi(a^{-1}na)
 =\exp\left(
 2\pi i\sum_{j=1}^3\xi_j\e^{s_j(a)}x_j(n)
 \right).
\]
Because every $\xi_j$ is nonzero, the stabilizer of $\chi_\xi$ in $A$ is
exactly $Z_+$.  Moreover, inner automorphisms of $N$ act trivially on
$N/[N,N]$; hence the stabilizer in $B_4^+$ is
\[
 H=NZ_+.
\]

The induced realization associated with $\chi_\xi$ can now be written
explicitly.
For $a\in A$, set
\[
 r(a)=(\det a)^{1/4}.
\]
For $\tau\in\R$, define
$\pi_{\xi,\tau}:B_4^+\to\U(L^2(\R^3))$ by
\begin{equation}
 \bigl(\pi_{\xi,\tau}(na)F\bigr)(t)
 =r(a)^{i\tau}
 \exp\left(
 2\pi i\sum_{j=1}^3\xi_j\e^{t_j}x_j(n)
 \right)
 F\bigl(t-s(a)\bigr).
 \label{eq:B4-explicit-representation}
\end{equation}
Here $r^{i\tau}=\e^{i\tau\log r}$.

\begin{proposition}
\label{prop:B4-explicit-representation}
For every $\xi\in(\R\setminus\{0\})^3$ and $\tau\in\R$, the formula
\eqref{eq:B4-explicit-representation} defines an infinite-dimensional
irreducible strongly continuous unitary representation of $B_4^+$.
\end{proposition}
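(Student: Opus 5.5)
I would verify the three properties of \eqref{eq:B4-explicit-representation} in the order: well-definedness and the homomorphism law, unitarity with strong continuity, and then irreducibility. Infinite-dimensionality is immediate, since $L^2(\R^3)$ is infinite-dimensional.

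For the first two steps, I use the unique factorization $g=na$ and write $(na)(n'a')=(n\,an'a^{-1})(aa')$. The scalar factor satisfies $r(aa')=r(a)r(a')$, so $r^{i\tau}$ is a character of $A$. The operator $\pi_{\xi,\tau}(n'a')$ is applied first, and then $\pi_{\xi,\tau}(na)$ translates its output by $s(a)$. In the product, the phase contributed by $n'$ therefore appears evaluated at $t-s(a)$, namely as $\exp(2\pi i\sum_j\xi_j\e^{t_j-s_j(a)}x_j(n'))$. By \eqref{eq:B4-effective-law} this equals the phase of $an'a^{-1}$ evaluated at $t$. Since $x$ is additive on $N$ modulo $N_2$, the phases multiply correctly, and the translations compose because $s(aa')=s(a)+s(a')$. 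Each operator is a unimodular multiplier composed with a translation, so it is unitary. Strong continuity follows from three facts: translation is strongly continuous on $L^2$; the multipliers converge pointwise boundedly, so dominated convergence applies; and the factorization map $g\mapsto(n,a)$ is continuous.

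For irreducibility, I would use the following route. Let $T$ be a bounded operator commuting with every $\pi_{\xi,\tau}(g)$. Restricting to $N$, the operators $\pi(n)$ are multiplications by $\exp(2\pi i\sum_j\xi_j\e^{t_j}x_j)$ with $x\in\R^3$ arbitrary. The map $t\mapsto(\xi_j\e^{t_j})_j$ is a diffeomorphism of $\R^3$ onto an open orthant $O$; here I use that every $\xi_j\neq0$. Pulling back, these multipliers become the characters $\exp(2\pi i\,y\cdot x)$ on $L^2(O)$. Their weak-$*$ closed linear span is all of $L^\infty(O)$, since the Fourier transform is injective on $L^1$. Hence $T$ commutes with the maximal abelian algebra $L^\infty(\R^3)$ and is therefore a multiplication operator $M_\psi$. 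Commutation with $\pi(a)$, where $s(a)$ ranges over all of $\R^3$, forces $\psi(t-s)=\psi(t)$ almost everywhere for every $s$. A translation-invariant $L^\infty$ function is almost everywhere constant, so $T$ is scalar and Schur's lemma gives irreducibility.

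I expect the main obstacle to be this last step: justifying that the $N$-multipliers generate all of $L^\infty$ and handling the ``almost everywhere for every $s$'' quantifier. For the second point I would convolve $\psi$ with an approximate identity, or use Fubini on $\{(t,s)\}$ to show $\psi$ agrees almost everywhere with a constant. Alternatively, I could cite Mackey's machine: the representation is induced from $\chi_\xi\cdot r^{i\tau}$ on the stabilizer $H=NZ_+$, and the orbit of $\chi_\xi$ is the open orthant, so the inducing data yield an irreducible representation. I prefer the direct commutant argument because it is self-contained.
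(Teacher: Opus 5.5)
Your proposal is correct and follows essentially the same route as the paper. Unitarity comes from the unimodular phases and measure-preserving translations, and the homomorphism law from $x(gg')=x(n)+\e^{-s(a)}\odot x(n')$ together with $r(aa')=r(a)r(a')$. Irreducibility follows because the $N$-multipliers generate $L^\infty(\R^3)$, so the commutant consists of multiplication operators, and the translations by all $s(a)\in\R^3$ then force these to be constant.

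The differences are only cosmetic. You prove strong continuity by dominated convergence plus continuity of translation, rather than on compactly supported functions followed by density. You also justify the $L^\infty$-generation step by weak-$*$ density via injectivity of the Fourier transform on $L^1$, which is a slightly more explicit version of the paper's point-separation and Borel-generation remark.
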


\begin{proof}
We verify, in order, unitarity, multiplicativity, strong continuity, and
irreducibility.

The formula is unitary because the first two factors in
\eqref{eq:B4-explicit-representation} are unimodular, whereas the translation
$F(t)\mapsto F(t-s(a))$ preserves Lebesgue measure.

For the representation law, write $u=x(n)$, $u'=x(n')$,
$s=s(a)$, and $s'=s(a')$.  Equations
\eqref{eq:B4-effective-law} and
\[
 \e^{t-s}\odot u'=\e^t\odot(\e^{-s}\odot u')
\]
show that the product of the two phase factors is precisely the phase
associated with
\[
 (u,s)(u',s')
 =\bigl(u+\e^{-s}\odot u',\,s+s'\bigr).
\]
Since also $r(aa')=r(a)r(a')$, multiplicativity follows.  Strong continuity
is first obtained on compactly supported continuous functions and then, by
density, on all of $L^2(\R^3)$.

We finally turn to irreducibility.  The operators corresponding to $n\in N$
include all multiplication operators
\[
 M_uF(t)=
 \exp\left(2\pi i\sum_{j=1}^3\xi_j\e^{t_j}u_j\right)F(t),
 \qquad u\in\R^3.
\]
Because
\[
 t\longmapsto(\xi_1\e^{t_1},\xi_2\e^{t_2},\xi_3\e^{t_3})
\]
is a Borel isomorphism from $\R^3$ onto an open orthant, these characters
separate points of the open orthant and generate its Borel $\sigma$-algebra.
Pulling them back through the displayed Borel isomorphism shows that the von
Neumann algebra generated by the operators $M_u$ is the full multiplication
algebra $L^\infty(\R^3)$.  Consequently, an operator commuting with all
$M_u$ must be multiplication by some
$h\in L^\infty(\R^3)$.  The diagonal subgroup, in turn, supplies every
translation $F(t)\mapsto F(t-s)$, $s\in\R^3$.  Commutation with these
translations forces $h(t)=h(t-s)$ almost everywhere for every $s$, so that
$h$ is essentially constant.  The commutant is therefore $\mathbb CI$,
which proves irreducibility.
\end{proof}

\subsection{The exact projective kernel and effective quotient}

The representation has now been identified, and we next determine the
parameter space on which its orbit vectors are genuinely distinct.  The
following proposition computes both the projective kernel and the resulting
effective group.

\begin{proposition}
\label{prop:B4-projective-kernel}
The projective kernel of $\pi_{\xi,\tau}$ is
\begin{equation}
 K=\pker(\pi_{\xi,\tau})=Z_+N_2.
 \label{eq:B4-projective-kernel}
\end{equation}
Consequently,
\begin{align}
 Q=B_4^+/K
 &\simeq B_4^{+,1}/N_2 \notag\\
 &\simeq\prod_{j=1}^3\bigl(\R\rtimes\R\bigr),
 \label{eq:B4-effective-quotient}
\end{align}
where each factor has multiplication
\begin{equation}
 (u_j,s_j)(u_j',s_j')
 =\bigl(u_j+\e^{-s_j}u_j',\,s_j+s_j'\bigr).
 \label{eq:B4-affine-law}
\end{equation}
In particular, $Q$ is a six-dimensional nonabelian exponential Lie group.
\end{proposition}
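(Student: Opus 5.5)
The plan is to prove the two inclusions $Z_+N_2\subseteq K$ and $K\subseteq Z_+N_2$ directly from the formula \eqref{eq:B4-explicit-representation}. After that, the quotient is read off from the effective law \eqref{eq:B4-effective-law}.

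\emph{Inclusion $Z_+N_2\subseteq K$.} Take $g=rI_4=n\,a$ with $n=I$ and $a=rI_4$. Then $s(a)=0$ and $x(n)=0$, so $\pi_{\xi,\tau}(g)=r^{i\tau}\,r^{i\tau}\cdots$; more precisely, $r(a)=(\det a)^{1/4}=r$, so $\pi_{\xi,\tau}(g)=r^{i\tau}I$, which is a unimodular scalar. Next take $n\in N_2$. Then $x(n)=0$ and $a=I$, so $\pi_{\xi,\tau}(n)=I$. Since $K$ is a subgroup, this gives $Z_+N_2\subseteq K$.

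\emph{Inclusion $K\subseteq Z_+N_2$.} Let $g=na\in K$, so $\pi_{\xi,\tau}(g)=cI$ for some $c\in\T$. Apply this operator to an $F$ supported in a small ball $B$ around a point $t_0$. The output is supported in $B+s(a)$, so $s(a)=0$. Hence $a\in Z_+$, because $s(a)=0$ forces all the $a_j$ to be equal. The operator then reduces to multiplication by $r(a)^{i\tau}\exp\bigl(2\pi i\sum_j\xi_j\e^{t_j}x_j(n)\bigr)$, and this must be almost everywhere equal to the constant $c$. The function $t\mapsto\sum_j\xi_j\e^{t_j}x_j(n)$ is continuous, and its exponential is constant. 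Since $\R^3$ is connected, $\sum_j\xi_j\e^{t_j}x_j(n)$ is itself constant in $t$. The functions $\e^{t_1},\e^{t_2},\e^{t_3}$ together with $1$ are linearly independent, and each $\xi_j\neq0$, so $x(n)=0$. That means $n\in\ker(N\to N/N_2)=N_2$, and therefore $g\in Z_+N_2$.

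\emph{The quotient.} By the direct-product splitting \eqref{eq:B4-scalar-splitting}, $B_4^+/Z_+N_2\simeq B_4^{+,1}/N_2$. Here $N_2$ is normal in $B_4^{+,1}$: it is $[N,N]$, it is stable under $A$, and it is stable under $N$. Every coset in $B_4^{+,1}/N_2$ is determined by the pair $(x(n),s(a))\in\R^3\times\R^3$. This uses the unique factorization $g=na$ together with the determinant-one section \eqref{eq:B4-diagonal-section}. By \eqref{eq:B4-effective-law}, the induced product is coordinatewise $(u_j,s_j)(u_j',s_j')=(u_j+\e^{-s_j}u_j',s_j+s_j')$. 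This is precisely the product of three affine groups \eqref{eq:B4-affine-law}, and the map is a bijective continuous homomorphism of Lie groups, hence an isomorphism.

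Each affine factor is the connected $ax+b$ group. That group is nonabelian and exponential, as in Example~\ref{ex:affine-escape}. A finite product of exponential groups is exponential, because its exponential map is the product of the factor exponential maps. This gives a six-dimensional nonabelian exponential group. It is also consistent with Theorem~\ref{thm:projective-c0-input}, which already guarantees exponentiality. The only delicate point is the support argument in the second inclusion that forces $s(a)=0$. Beyond that, the main care needed is to verify that $N_2$ is normal and that the coordinate map is well defined on cosets, which follows because $x(\cdot)$ and $s(\cdot)$ are invariant under right multiplication by $N_2$.
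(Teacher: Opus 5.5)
Your proposal is correct and follows the paper's proof essentially step for step. You prove $Z_+N_2\subseteq K$ by direct evaluation, use a small-ball support argument to force $s(a)=0$, show that constancy of the remaining multiplier forces $x(n)=0$, and then read the quotient law off \eqref{eq:B4-effective-law}. Your additional details make explicit what the paper leaves implicit: continuity and connectedness to pass from a constant exponential to a constant phase, linear independence of $1,\e^{t_1},\e^{t_2},\e^{t_3}$, normality of $N_2$, and well-definedness of the coset coordinates.
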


\begin{proof}
Every element of $N_2$ has $x(n)=0$ and therefore acts trivially, whereas
every $rI_4\in Z_+$ acts by the scalar $r^{i\tau}$.  Hence
$Z_+N_2\subseteq\pker(\pi_{\xi,\tau})$.

For the reverse inclusion, suppose that $\pi_{\xi,\tau}(na)$ is scalar.
A nonzero translation cannot be hidden by the phase multiplier in
\eqref{eq:B4-explicit-representation}: if $s(a)\neq0$, choose a small ball
$E\subset\R^3$ whose closure is disjoint from $E+s(a)$.  If a nonzero $F$
is supported in $E$, then $\pi_{\xi,\tau}(na)F$ is supported in $E+s(a)$,
whereas every scalar multiple of $F$ is supported in $E$.  It follows that
$s(a)=0$, and hence that $a$ is a positive scalar matrix.  The remaining
multiplication function
\[
 t\longmapsto
 \exp\left(2\pi i\sum_{j=1}^3\xi_j\e^{t_j}x_j(n)\right)
\]
is constant only if $x_j(n)=0$ for all $j$, since every $\xi_j$ is nonzero.
Thus $n\in N_2$, which proves the reverse inclusion.  The quotient law is
exactly \eqref{eq:B4-effective-law}, coordinate by coordinate.
\end{proof}

Proposition~\ref{prop:B4-projective-kernel} therefore separates the geometry
cleanly: the higher-root directions $E_{13},E_{24},E_{14}$ are projectively
invisible in this family, while the three simple-root directions
$E_{12},E_{23},E_{34}$ survive.  In particular, the effective group is not
abelian; it is the product of three independent affine geometries.

\subsection{Direct coefficient decay}

The preceding coordinates also make projective $C_0$ decay directly
accessible, without any appeal to the general representation-theoretic
input.

\begin{proposition}
\label{prop:B4-direct-C0}
For all $F,G\in L^2(\R^3)$, the coefficient modulus
\[
 gK\longmapsto
 \left|\langle F,\pi_{\xi,\tau}(g)G\rangle\right|
\]
belongs to $C_0(Q)$.
\end{proposition}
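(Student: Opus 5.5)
We plan a direct proof that uses only the explicit formula \eqref{eq:B4-explicit-representation} and the quotient coordinates $(u,s)\in\R^3\times\R^3$ of Proposition~\ref{prop:B4-projective-kernel}. Continuity and well-definedness of the modulus on $Q$ are already supplied by Proposition~\ref{prop:quotient-dictionary}(ii), so only vanishing at infinity needs proof.

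The first step is a density reduction. The map $(F,G)\mapsto|\langle F,\pi(g)G\rangle|$ satisfies
\[
\bigl||\langle F,\pi(g)G\rangle|-|\langle F',\pi(g)G'\rangle|\bigr|\leq\|F-F'\|\|G\|+\|F'\|\|G-G'\|
\]
uniformly in $g$. Since $C_0(Q)$ is closed under uniform limits, it therefore suffices to treat $F,G$ that are indicators of bounded boxes (or compactly supported smooth functions) in $\R^3$.

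Next we write down the coefficient explicitly. With $u=x(n)$ and $s=s(a)$, and up to the unimodular factor $r(a)^{i\tau}$,
\[
\langle F,\pi(g)G\rangle=\int_{\R^3}F(t)\,\overline{G(t-s)}\,
\exp\Bigl(-2\pi i\sum_{j=1}^3\xi_j\e^{t_j}u_j\Bigr)\,dt .
\]
The integral factors over coordinates when $F$ and $G$ are products of one-variable functions. This is the reason to take product test functions (box indicators, or tensor products of $C_c^\infty$ functions): finite linear combinations of such products are dense in $L^2(\R^3)$. The coefficient then equals a product of three one-dimensional integrals
\[
I_j(u_j,s_j)=\int_\R f_j(t)\,\overline{g_j(t-s_j)}\,\e^{-2\pi i\xi_j\e^{t}u_j}\,dt .
\]
A product of bounded functions each tending to zero in its own factor does not vanish at infinity on the product, so we do not argue factorwise. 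Instead, we show that each $I_j$ lies in $C_0(\R^2)$ in the variables $(u_j,s_j)$. Since the factors are uniformly bounded by $\|f_j\|\|g_j\|$, the product then tends to $0$ whenever $(u,s)\to\infty$. The reason is that some coordinate pair $(u_j,s_j)$ must leave compact sets along a subsequence, and that factor tends to $0$ while the others stay bounded.

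The main step, and the expected obstacle, is the one-dimensional statement $I_j\in C_0(\R^2)$ for $f_j,g_j\in C_c^\infty(\R)$. We split into two cases.
\begin{itemize}
\item If $|s_j|$ is larger than the sum of the support diameters, then $f_j(t)g_j(t-s_j)\equiv0$ and $I_j=0$.
\item If $s_j$ stays in a compact set, then $|u_j|\to\infty$. Substituting $v=\e^{t}$ turns the phase into the linear phase $\e^{-2\pi i\xi_ju_jv}$. The amplitude $f_j(\log v)\overline{g_j(\log v-s_j)}v^{-1}$ is smooth, supported in a fixed compact subset of $(0,\infty)$, and has derivatives bounded uniformly for $s_j$ in compacta. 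One integration by parts, or the Riemann--Lebesgue lemma with uniformity over this compact family of amplitudes, gives $|I_j|\lesssim|\xi_ju_j|^{-1}\to0$.
\end{itemize}
The care needed is in this uniformity, which we handle through the explicit derivative bound coming from integration by parts. Combining the two cases yields $I_j\in C_0(\R^2)$. Density then completes the proof without invoking Theorem~\ref{thm:projective-c0-input}.
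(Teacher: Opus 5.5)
Your proof is correct and is essentially the paper's argument: a density reduction, vanishing by disjoint supports once $s$ leaves compact sets, and, for $s$ in a compact set, the substitution $v=\e^{t}$, which turns the coefficient into a Fourier transform that decays uniformly as $|u|\to\infty$. The only differences are technical. You factor over tensor products of $C_c^\infty(\R)$ functions, pass to finite linear combinations by sesquilinearity once the $(F,G)$-independent unimodular factor from $r(a)^{i\tau}$ is removed, and use integration by parts to get an explicit $O(|\xi_ju_j|^{-1})$ bound. The paper instead works directly with $C_c^\infty(\R^3)$ and applies the Riemann--Lebesgue lemma uniformly over the compact family of $L^1$ amplitudes.
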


\begin{proof}
By density, it suffices first to take $F,G\in C_c^\infty(\R^3)$.  In the
coordinates $(u,s)$ of \eqref{eq:B4-effective-quotient}, and with the inner
product linear in its first variable, the coefficient is
\begin{equation}
 c_{F,G}(u,s)
 =\int_{\R^3}
 F(t)\overline{G(t-s)}
 \exp\left(
 -2\pi i\sum_{j=1}^3\xi_ju_j\e^{t_j}
 \right)\,dt.
 \label{eq:B4-coefficient}
\end{equation}
When $s$ leaves every compact set, the two compact supports are eventually
disjoint, and the coefficient is therefore zero.  It remains to consider
$s$ in a compact set while $|u|\to\infty$.  After the change of variables
$y_j=\e^{t_j}$, the integral becomes the Euclidean Fourier transform, at
frequency
$(\xi_1u_1,\xi_2u_2,\xi_3u_3)$, of an $L^1$-function supported in a compact
subset of $(0,\infty)^3$.  As $s$ ranges over a fixed compact set, the
resulting amplitudes depend continuously on $s$ in $L^1$; their range is
therefore compact in $L^1$.  The Riemann--Lebesgue lemma, applied uniformly
on compact subsets of $L^1$, therefore gives decay uniformly in $s$.  Since every $\xi_j$
is nonzero, $|u|\to\infty$ if and only if the resulting frequency leaves
every compact set.  This proves the assertion on the dense subspace.  To
pass to general $F,G\in L^2(\R^3)$, choose
$F_0,G_0\in C_c^\infty(\R^3)$.  Unitarity gives, uniformly in
$g\in B_4^+$,
\[
 \left|
 \langle F,\pi_{\xi,\tau}(g)G\rangle-
 \langle F_0,\pi_{\xi,\tau}(g)G_0\rangle
 \right|
 \leq
 \|F-F_0\|_2\|G\|_2+
 \|F_0\|_2\|G-G_0\|_2.
\]
The coefficient functions, and hence also their moduli, are therefore
uniform limits of $C_0(Q)$ functions.  The conclusion follows for arbitrary
$F,G\in L^2(\R^3)$.
\end{proof}

\subsection{Exact storage and retrieval directions}

It remains to exhibit the geometric datum that drives the storage
construction.  Set
\[
 \begin{aligned}
 D_1&=\frac14\operatorname{diag}(3,-1,-1,-1),\\
 b(R)&=\exp(RD_1)\\
 &=\operatorname{diag}
 \left(\e^{3R/4},\e^{-R/4},\e^{-R/4},\e^{-R/4}\right),
 \end{aligned}
\]
and
\[
 p(\theta)=\exp(\theta E_{12})=I_4+\theta E_{12}.
\]
Both curves lie in $B_4^{+,1}$.  Since
\[
 [D_1,E_{12}]=E_{12},
\]
the exact conjugation formula is
\begin{align}
 b(R)p(\theta)^{-1}p(\theta')b(R)^{-1}
 &=b(R)\bigl(I_4+(\theta'-\theta)E_{12}\bigr)b(R)^{-1}\notag\\
 &=I_4+\e^R(\theta'-\theta)E_{12}.
 \label{eq:B4-exact-escape}
\end{align}
Let $q:B_4^+\to Q=B_4^+/K$ denote the quotient map.  In the quotient
coordinates introduced above,
\[
 q(b(R))=(0,(-R,0,0))
\]
and
\[
 q\!\left(
 b(R)p(\theta)^{-1}p(\theta')b(R)^{-1}
 \right)
 =\bigl(\e^R(\theta'-\theta)e_1,0\bigr).
\]
The displayed formula for $q(b(R))$ shows that $q(b(R))\to\infty$, while
for $\theta\neq\theta'$ the conjugation computation yields
\[
 q\!\left(
 b(R)p(\theta)^{-1}p(\theta')b(R)^{-1}
 \right)\longrightarrow\infty.
\]
Together, these are precisely the ordinary and conjugation escape hypotheses
of Corollary~\ref{cor:projective-storage}.

The same separation can also be read at the operator level:
\[
 \pi_{\xi,\tau}(b(R))
 \pi_{\xi,\tau}(p(\theta))^{-1}
 \pi_{\xi,\tau}(p(\theta'))
 \pi_{\xi,\tau}(b(R))^{-1}
 =M_{\exp(2\pi i\xi_1\e^{t_1+R}(\theta'-\theta))}.
\]
Hence the diagonal flow amplifies every nonzero difference between two probe
parameters at the exact exponential rate $\e^R$.

\begin{figure}[H]
\centering
\begin{minipage}[c]{0.53\linewidth}
\centering
\begin{tikzpicture}[
 vertex/.style={circle,draw=OrbitInk!55,fill=white,
   minimum size=7mm,font=\small\bfseries},
 keep/.style={-{Stealth[length=2.1mm]},OrbitBlue,line width=1.5pt},
 kill/.style={-{Stealth[length=1.8mm]},OrbitInk!60,
   densely dashed,line width=1.05pt}
]
 \node[vertex] (v1) at (0,0) {1};
 \node[vertex] (v2) at (1.8,0) {2};
 \node[vertex] (v3) at (3.6,0) {3};
 \node[vertex] (v4) at (5.4,0) {4};

 \draw[keep] (v1) -- node[below=3pt,font=\scriptsize] {$E_{12}$} (v2);
 \draw[keep] (v2) -- node[below=3pt,font=\scriptsize] {$E_{23}$} (v3);
 \draw[keep] (v3) -- node[below=3pt,font=\scriptsize] {$E_{34}$} (v4);

 \draw[kill,bend left=38]
   (v1) to node[above=3pt,font=\scriptsize] {$E_{13}$} (v3);
 \draw[kill,bend left=38]
   (v2) to node[above=3pt,font=\scriptsize] {$E_{24}$} (v4);
 \draw[kill,bend left=62]
   (v1) to node[above=20pt,font=\scriptsize] {$E_{14}$} (v4);

 \draw[keep] (0,-1.15) -- +(0.72,0)
   node[right=3pt,text=black,font=\scriptsize] {survives in $Q$};
 \draw[kill] (2.85,-1.15) -- +(0.72,0)
   node[right=3pt,text=black,font=\scriptsize] {lies in $N_2\subset K$};
\end{tikzpicture}
\end{minipage}
\hfill
\begin{minipage}[c]{0.43\linewidth}
\centering
\begin{tikzpicture}
\begin{semilogyaxis}[
 width=.97\linewidth,
 height=5.1cm,
 domain=0:6,
 samples=160,
 xmin=0,xmax=6,
 ymin=0.08,ymax=500,
 xlabel={$R$},
 ylabel={$|u_1(R)|$},
 title={Exact conjugation escape},
 title style={font=\small\bfseries},
 label style={font=\small},
 tick label style={font=\scriptsize},
 grid=both,
 major grid style={OrbitInk!14},
 minor grid style={OrbitInk!7},
 axis line style={OrbitInk!60},
 legend style={draw=none,fill=none,font=\scriptsize,
   at={(0.03,0.97)},anchor=north west}
]
 \addplot[OrbitBlue,very thick] {exp(x)};
 \addlegendentry{$|\theta'-\theta|=1$}
 \addplot[OrbitGold,very thick,dash pattern=on 7pt off 2pt]
   {0.3*exp(x)};
 \addlegendentry{$|\theta'-\theta|=0.3$}
 \addplot[OrbitInk!65,very thick,densely dashed]
   {0.1*exp(x)};
 \addlegendentry{$|\theta'-\theta|=0.1$}
\end{semilogyaxis}
\end{tikzpicture}
\end{minipage}
\caption{Left: the simple-root directions survive the projective quotient,
whereas the higher-root subgroup $N_2=[N,N]$ is projectively invisible.
Right: the surviving $E_{12}$ coordinate after conjugation is exactly
$\e^R|\theta'-\theta|$, so each pair of distinct retrieval parameters
separates exponentially.}
\label{fig:B4-effective-geometry}
\end{figure}

Combining the coefficient decay with the exact conjugation identity now
places this concrete representation within the scope of the abstract storage
theorem.  We therefore obtain the following orbit-frame conclusion.

\begin{corollary}[Concrete orbit frames on $B_4^+$]
\label{cor:B4-orbit-frame}
For every $\xi\in(\R\setminus\{0\})^3$, $\tau\in\R$, and
$0<\varepsilon<1$, there exist a nonzero $\phi\in L^2(\R^3)$ and a
countable set $\Gamma\subset B_4^+$ such that
\[
 (1-\varepsilon)^2\|F\|_2^2
 \leq\sum_{\gamma\in\Gamma}
 |\langle F,\pi_{\xi,\tau}(\gamma)\phi\rangle|^2
 \leq(1+\varepsilon)^2\|F\|_2^2
\]
for every $F\in L^2(\R^3)$.  Moreover, the quotient map is injective on
$\Gamma$, and its image in
$Q\simeq\operatorname{Aff}^+(\R)^3$ is left relatively separated.
\end{corollary}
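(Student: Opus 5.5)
The plan is to apply Corollary~\ref{cor:projective-storage} directly to $\pi=\pi_{\xi,\tau}$ on $G=B_4^+$, with $K=Z_+N_2$ as computed in Proposition~\ref{prop:B4-projective-kernel}. The data are
\[
A=U=(\R,+),\qquad p(\theta)=I_4+\theta E_{12},\qquad h_m=b(m)=\exp(mD_1).
\]
First I will record the standing hypotheses. The group $B_4^+\simeq\R^{10}$ is a second-countable locally compact group, and $L^2(\R^3)$ is an infinite-dimensional separable Hilbert space. By Proposition~\ref{prop:B4-explicit-representation}, $\pi_{\xi,\tau}$ is a strongly continuous unitary representation on this space. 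The group $(\R,+)$ is nondiscrete and Hausdorff, and $p$ is a continuous homomorphism because $E_{12}^2=0$.

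Next I will verify projective $C_0$ decay in the sense of Definition~\ref{def:projective-c0}. This is exactly Proposition~\ref{prop:B4-direct-C0}, so the general input Theorem~\ref{thm:projective-c0-input} is not needed. Alternatively, Corollary~\ref{cor:cross-c0} also applies, since $B_4^+$ is exponential.

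Then I will check the two escape conditions. The first is \eqref{eq:geometric-storage-escape}: we have $q(b(m))=(0,(-m,0,0))$, which leaves every compact subset of $Q$ because the quotient coordinates $(u,s)$ give a homeomorphism $Q\simeq\R^6$. The second is \eqref{eq:geometric-conjugation-escape}: taking $\theta=0$ and $\theta'=a\neq0$ in \eqref{eq:B4-exact-escape} gives
\[
h_mp(a)h_m^{-1}=I_4+\e^{m}aE_{12},
\]
whose image in $Q$ is $(\e^m a e_1,0)$. Its $u_1$-coordinate diverges, so it escapes to infinity.

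Corollary~\ref{cor:projective-storage} then yields $\phi\neq0$ and $\Gamma$ with the frame bounds \eqref{eq:storage-frame}. It also gives injectivity of $q$ on $\Gamma$ and left relative separation of $q(\Gamma)$ in $Q\simeq\operatorname{Aff}^+(\R)^3$; this identification is \eqref{eq:B4-effective-quotient}. One could instead invoke Theorem~\ref{thm:main} or Corollary~\ref{cor:lifted-escape}, but the explicit route is what the section intends.

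The only delicate point is the claim that the coordinate identification $Q\simeq\R^3\times\R^3$ is a homeomorphism, so that divergence of the coordinates means escape from compacta. I would justify this from the smooth global factorization $g=na$ together with the scalar splitting \eqref{eq:B4-scalar-splitting}. These make $(x(n),s(a))$ a continuous bijection $Q\to\R^6$ that is open, since the quotient map by a closed normal subgroup of a Lie group is a submersion; by invariance of domain it is a homeomorphism.
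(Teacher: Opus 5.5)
Your proposal is correct and follows essentially the same route as the paper's proof. Like the paper, you apply Corollary~\ref{cor:projective-storage} with $A=U=(\R,+)$, $p(\theta)=I_4+\theta E_{12}$ and $h_m=b(m)$, take projective $C_0$ decay from Proposition~\ref{prop:B4-direct-C0}, and read both escape limits off $q(b(R))=(0,(-R,0,0))$ and \eqref{eq:B4-exact-escape}. Your extra check that the coordinates $(u,s)$ identify $Q$ homeomorphically with $\R^6$ is fine, though invariance of domain is unnecessary: $(u,s)\mapsto q\bigl(n(u,0,0)\,a(s)\bigr)$ is an explicit continuous inverse.
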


\begin{proof}
Take $A=U=(\R,+)$, use the homomorphism $p$ displayed above, and set
$h_m=b(m)$.  Proposition~\ref{prop:B4-direct-C0} supplies projective $C_0$
decay, while the computation of $q(b(R))$ preceding
\eqref{eq:B4-exact-escape}, together with that identity, supplies the two
required quotient-escape limits.  The conclusion now follows from
Corollary~\ref{cor:projective-storage}.
\end{proof}

\begin{remark}[Exact scope of the family]
The parameters $\xi_j\neq0$ form eight open sign orthants in
$(N/[N,N])^\wedge\simeq\R^3$, with the diagonal group acting transitively on
each orthant, while varying $\tau$ supplies the central characters of $Z_+$.
We obtain in this way a substantial family of irreducible representations of
the full group $B_4^+$, without asserting a classification of its entire
unitary dual.  Such a classification would also have to account for
representations arising from higher-dimensional irreducible representations
of $N$ and would require additional orbit or Mackey analysis, none of which
is needed for the present application.
\end{remark}

\appendix

\section{Scope of the machine-checked supplement}
\label{app:lean}

With the analytic argument and the triangular case study complete, we now
record precisely which components have been checked in Lean and provide
the filenames of the corresponding source files.
The companion has two scoped layers.  A dependency-free component record,
consisting of seven Lean source files, was checked with Lean~4.32.1.  The
substantive Hilbert-space layer consists of thirteen Lean source files and is
pinned to Lean~4.28.0 and Mathlib~4.28.0.  To keep the present
representation-theoretic scope visible, the module names below identify the
relevant source files rather than an aggregate entry point.  A source-only
directory named \texttt{LeanCertificate} accompanies the arXiv submission as
ancillary material.

\begin{center}
\fcolorbox{OrbitBlue}{OrbitMist}{%
\parbox{0.90\linewidth}{\centering
\textbf{Lean source directory.}\enspace
On arXiv, the complete source-only certificate appears in the ``Ancillary
files'' section of the article's abstract page.  The filenames listed below
identify the corresponding modules inside the \texttt{LeanCertificate}
directory.}}
\end{center}

Each check mark applies only to the result or reusable component stated in
that item; it does not certify a surrounding manuscript theorem unless the
item explicitly says ``end to end.''  The certificate is hash-bound to this
manuscript for identification, but that binding does not enlarge the
formalized boundary.  Thus no check mark should be read as an end-to-end
certification of the entire \TeX{} file.

We first list the result that is certified from its hypotheses through its
conclusion.

\noindent\textbf{Certified end to end.}
\begin{itemize}[label=\(\checkmark\),leftmargin=*]
\item Lemma~\ref{lem:synthesis-perturbation}, in its genuine complex
Hilbert-space form with bounded synthesis maps, adjoints, and operator norms.
Its exact Lean declaration is \texttt{revised\_synthesis\_perturbation},
defined in
\LeanSourceLink{RevisedAnalyticWrapper.lean}.
\end{itemize}

The remaining check marks concern reusable components that enter longer
arguments but do not, by themselves, certify the corresponding manuscript
theorems end to end.

\noindent\textbf{Certified reusable components.}
\begin{itemize}[label=\(\checkmark\),leftmargin=*]
\item The repeated-Hilbert-basis Parseval identity, the square-root past-error
estimate, the rational error budget, and the final conditional frame bounds
are formalized in
\LeanSourceLink{RevisedAnalyticWrapper.lean}.

\item Construction of a bounded synthesis map from square-summable columns,
including its single-column formula and norm bound, is formalized in
\LeanSourceLink{ColumnSynthesisWrapper.lean}.

\item The finite Bessel counting estimate with constant
$4B/\|\phi\|^2$, the four-term complex polarization identity, and the
diagonal-to-cross decay theorem are formalized in
\LeanSourceLink{RevisedPreliminaries.lean}.  The corresponding axiom audit is
recorded in \LeanSourceLink{RevisedPreliminariesAxiomAudit.lean}.

\item Exact affine-ray escape, distinct-direction escape, and a common finite
escape threshold are formalized in
\LeanSourceLink{DirectionalAffineEscape.lean}.  The corresponding axiom audit
is recorded in \LeanSourceLink{DirectionalAffineEscapeAxiomAudit.lean}.
\end{itemize}

To make the boundary explicit, these components contribute to
Lemma~\ref{lem:bessel-relative-separation},
Corollary~\ref{cor:cross-c0}, and the functional-analytic estimates inside
Theorem~\ref{thm:abstract-storage}; they do not furnish an end-to-end
certificate for any of those results.  The weak-operator escape lemma, the
sequential placement and infinite Schur argument, the general
projective-kernel and projective-$C_0$ inputs, the exponential Lie-algebra
escape theorem, the Weyl--Stone--von Neumann branch, the semisimple and
local-field extensions, and the $B_4^+$ case study remain outside the
end-to-end formalized boundary.  Accordingly, Theorems
\ref{thm:abstract-storage}, \ref{thm:main}, \ref{thm:simple-main}, and
\ref{thm:local-field-main}, as well as Corollary~\ref{cor:B4-orbit-frame},
are not marked as Lean-certified.

Finally, the ancillary directory includes a clean-replay script and a
source-hash manifest.  Clean replays of both pinned projects succeeded;
moreover, a strict scan found no proof placeholders or project axioms.  The audited
endpoints use only the standard
Lean/Mathlib axioms \texttt{propext}, \texttt{Classical.choice}, and
\texttt{Quot.sound}.

\section*{Acknowledgments and disclosure of computational assistance}

The author acknowledges substantial assistance from OpenAI's ChatGPT and
Codex in reorganizing the exposition, stress-testing proof steps, checking
constants and operator identities, and preparing \LaTeX{} and limited formal
experiments.  All generated material was treated as fallible draft material.
The author made the final mathematical and editorial decisions and assumes
sole responsibility for every statement, proof, citation, and formulation.


\begin{thebibliography}{99}

\bibitem{ArnalCurrey2020}
D.~Arnal and B.~Currey,
\emph{Representations of solvable Lie groups: Basic theory and examples},
New Mathematical Monographs, Cambridge University Press, Cambridge, 2020.

\bibitem{BaggettKleppner1973}
L.~Baggett and A.~Kleppner,
\emph{Multiplier representations of abelian groups},
J. Funct. Anal. \textbf{14} (1973), no.~3, 299--324.

\bibitem{BekkaLudwig1990}
M.~E.~B. Bekka and J.~Ludwig,
\emph{Complemented $*$-primitive ideals in $L^1$-algebras of exponential Lie
groups and of motion groups},
Math. Z. \textbf{204} (1990), no.~4, 515--526.

\bibitem{BeltitaVanVelthoven2024}
I.~Belti\c{t}\u{a} and J.~T. van Velthoven,
\emph{Symplectic projective orbits of unimodular exponential Lie groups},
Bull. Sci. Math. \textbf{194} (2024), Paper No.~103455.

\bibitem{Ciobotaru2014}
C.~Ciobotaru,
\emph{A unified proof of the Howe--Moore property},
J. Lie Theory \textbf{25} (2015), no.~1, 65--89.

\bibitem{DaubechiesGrossmannMeyer1986}
I.~Daubechies, A.~Grossmann, and Y.~Meyer,
\emph{Painless nonorthogonal expansions},
J. Math. Phys. \textbf{27} (1986), no.~5, 1271--1283.

\bibitem{DuffinSchaeffer1952}
R.~J. Duffin and A.~C. Schaeffer,
\emph{A class of nonharmonic Fourier series},
Trans. Amer. Math. Soc. \textbf{72} (1952), 341--366.

\bibitem{DufloMoore1976}
M.~Duflo and C.~C. Moore,
\emph{On the regular representation of a nonunimodular locally compact group},
J. Funct. Anal. \textbf{21} (1976), no.~2, 209--243.

\bibitem{FeichtingerGrochenig1988}
H.~G. Feichtinger and K.~Gr\"ochenig,
\emph{A unified approach to atomic decompositions via integrable group
representations},
in \emph{Function spaces and applications (Lund, 1986)}, Lecture Notes in
Math., vol.~1302, Springer, Berlin, 1988, pp.~52--73.

\bibitem{FeichtingerGrochenig1989I}
H.~G. Feichtinger and K.~H. Gr\"ochenig,
\emph{Banach spaces related to integrable group representations and their
atomic decompositions. I},
J. Funct. Anal. \textbf{86} (1989), no.~2, 307--340.

\bibitem{FeichtingerGrochenig1989II}
H.~G. Feichtinger and K.~H. Gr\"ochenig,
\emph{Banach spaces related to integrable group representations and their
atomic decompositions. II},
Monatsh. Math. \textbf{108} (1989), no.~2--3, 129--148.

\bibitem{Folland1989}
G.~B. Folland,
\emph{Harmonic analysis in phase space},
Annals of Mathematics Studies, vol.~122, Princeton University Press,
Princeton, NJ, 1989.

\bibitem{FollandAbstract2016}
G.~B. Folland,
\emph{A course in abstract harmonic analysis}, second ed.,
CRC Press, Boca Raton, FL, 2016.

\bibitem{FornasierRauhut2005}
M.~Fornasier and H.~Rauhut,
\emph{Continuous frames, function spaces, and the discretization problem},
J. Fourier Anal. Appl. \textbf{11} (2005), no.~3, 245--287.

\bibitem{FreemanSpeegle2019}
D.~Freeman and D.~Speegle,
\emph{The discretization problem for continuous frames},
Adv. Math. \textbf{345} (2019), 784--813.

\bibitem{FuehrGrochenig2007}
H.~F\"uhr and K.~Gr\"ochenig,
\emph{Sampling theorems on locally compact groups from oscillation estimates},
Math. Z. \textbf{255} (2007), no.~1, 177--194.

\bibitem{GrochenigRottensteiner2018}
K.~Gr\"ochenig and D.~Rottensteiner,
\emph{Orthonormal bases in the orbit of square-integrable representations of
nilpotent Lie groups},
J. Funct. Anal. \textbf{275} (2018), no.~12, 3338--3379.

\bibitem{HoweMoore1979}
R.~E. Howe and C.~C. Moore,
\emph{Asymptotic properties of unitary representations},
J. Funct. Anal. \textbf{32} (1979), no.~1, 72--96.

\bibitem{Husemoller1994}
D.~Husemoller,
\emph{Fibre bundles}, third ed.,
Graduate Texts in Mathematics, vol.~20, Springer-Verlag, New York, 1994.

\bibitem{Kleppner1965}
A.~Kleppner,
\emph{Multipliers on abelian groups},
Math. Ann. \textbf{158} (1965), 11--34.

\bibitem{Knapp2002}
A.~W. Knapp,
\emph{Lie groups beyond an introduction}, second ed.,
Progress in Mathematics, vol.~140, Birkh\"auser Boston, Boston, MA, 2002.

\bibitem{LeptinLudwig1994}
H.~Leptin and J.~Ludwig,
\emph{Unitary representation theory of exponential Lie groups},
De Gruyter Expositions in Mathematics, vol.~18, Walter de Gruyter, Berlin,
1994.

\bibitem{Oussa2018}
V.~Oussa,
\emph{Frames arising from irreducible solvable actions. I},
J. Funct. Anal. \textbf{274} (2018), no.~4, 1202--1254.

\bibitem{Oussa2019}
V.~Oussa,
\emph{Compactly supported bounded frames on Lie groups},
J. Funct. Anal. \textbf{277} (2019), no.~6, 1718--1762.

\bibitem{Oussa2024}
V.~Oussa,
\emph{Orthonormal bases arising from nilpotent actions},
Trans. Amer. Math. Soc. \textbf{377} (2024), no.~2, 1141--1181.

\bibitem{Oussa2026}
V.~Oussa,
\emph{Two open problems on orbit frames for Lie groups},
research preprint, 2026, doi:10.13140/RG.2.2.26162.98246.

\bibitem{Perelomov1972}
A.~M. Perelomov,
\emph{Coherent states for arbitrary Lie group},
Comm. Math. Phys. \textbf{26} (1972), no.~3, 222--236.

\bibitem{Pukanszky1968}
L.~Puk\'anszky,
\emph{On the unitary representations of exponential groups},
J. Funct. Anal. \textbf{2} (1968), no.~1, 73--113.

\bibitem{Simms1970}
D.~J. Simms,
\emph{Topological aspects of the projective unitary group},
Math. Proc. Cambridge Philos. Soc. \textbf{68} (1970), no.~1, 57--60.

\end{thebibliography}
\end{document}